\newcommand{\mE}{\mathbb{E}}
\newcommand{\mP}{\mathbb{P}}
\newcommand{\mR}{\mathbb{R}}
\newcommand{\cB}{\mathcal{B}}
\newcommand{\cF}{\mathcal{F}}
\newcommand{\lc}{\left\lbrace}
\newcommand{\rc}{\right\rbrace}
\newcommand{\la}{\left\lbrack}
\newcommand{\ra}{\right\rbrack}
\newcommand{\eps}{\varepsilon}
\newcommand{\1}{\mathds{1}}
\newcommand\dd{\mathrm{d}}
\newtheorem {definition}{Definition}[section]
\newtheorem {theorem}[definition]{Theorem}
\newtheorem{proposition}[definition]{Proposition}
\newtheorem {lemma}[definition]{Lemma}
\newtheorem {corollary}[definition]{Corollary}
\newtheorem {remark}[definition]{Remark}
\title[S\MakeLowercase{table processes conditioned to hit an interval}]{Stable processes conditioned to hit an interval continuously from the outside}
\author{Leif D\"oring}
\address{Leif D\"oring: University of Mannheim, Institute of Mathematics, 68161 Mannheim, Germany.}
\email{doering@uni-mannheim.de}
\author{Philip Weißmann$^{*}$}\thanks{$^{*}$Supported by the Research Training Group "Statistical Modeling of Complex Systems" funded by the German Science Foundation}
\address{Philip Weißmann: University of Mannheim, Institute of Mathematics, 68161 Mannheim, Germany.}
\email{hweissma@mail.uni-mannheim.de}
\begin{document}
 
\begin{abstract}
	Conditioning stable L\'evy processes on zero probability events recently became a tractable subject since several explicit formulas emerged from a deep analysis using the Lamperti transformations for self-similar Markov processes. In this article we derive new harmonic functions and use them to explain how to condition stable processes to hit continuously a compact interval from the outside. 
\end{abstract}

\maketitle
\tableofcontents

\section{Introduction}\label{sec_intro}
We consider one-dimensional $\alpha$-stable L\'evy processes with scaling index $\alpha\in (0,2)$ killed on entering the interval $[-1,1]$. In Döring et al. \cite{Doer_Kyp_Wei_01} the authors found a positive invariant function (sometimes called positive harmonic function) for such killed processes, i.e. a function $h:\mR \setminus [-1,1] \rightarrow (0,\infty)$ such that
\begin{align}\label{invariant}
	\mE^x \big[ \1_{\{ t < T_{[-1,1]} \}} h(\xi_t) \big] = h(x), \quad x \notin [-1,1], t\geq 0,
\end{align}
where $T_{B} := \inf\left\lbrace t \geq 0: \xi_t \in  B \right\rbrace$ denotes the first hitting time of an open or closed set $B$. 
The invariant function was used to condition the stable processes to avoid the interval and to relate the conditioned processes to their $h$-transformed path measure:
$$\mE^x \Big[ \1_\Lambda \1_{\{ t < T_{[-1,1]} \}} \frac{h(\xi_t)}{h(x)} \Big] = \lim_{s \rightarrow \infty} \mP^x(\Lambda \, | \, s+t < T_{[-1,1]}), \quad x \notin [-1,1], t\geq 0,
$$
for $\Lambda \in \cF_t$, where $(\mathcal{F}_t)_{t \geq 0}$ is the natural enlargement of the filtration induced by $\xi$. As for other processes conditioned to avoid sets the conditioned stable processes are transient. As a counter part, the present article studies the question if stable processes can also be conditioned to hit the interval continuously in finite time. \smallskip

In recent years the problem of conditioning processes to hit a given set $B$ continuously has attracted some attention. As an example take a stable process of index smaller than $1$ and a singleton $B = \{0\}$. Moreover, denote $\rho:= \mP(\xi_1>0)$ and $\hat\rho := 1-\rho$. It was proved in Kyprianou et al. \cite{Kyp_Riv_Sat_01} that 
$$e:\mR \setminus \lc 0 \rc \rightarrow (0,\infty), x \mapsto \begin{cases}
\sin(\pi\alpha\hat\rho) x^{\alpha-1} \quad &\text{if } x>0 \\
\sin(\pi\alpha\rho) |x|^{\alpha-1} \quad &\text{if } x<0
\end{cases},
$$
is excessive for the killed process, i.e. 
$$\mE^x \big[ \1_{\{ t < T_{\{ 0 \}} \}} e(\xi_t) \big] \leq e(x), \quad x \neq 0, t \geq 0,$$
and the $h$-transform with $e$ coincides with the stable process conditioned to hit $0$ continuously. Indeed, the authors showed that the killing time is finite almost surely and the left-limit at the killing time is $0$. Applications of the conditioned processes have been found for instance in the study of entrance and exit at infinity of stochastic differential equations driven by stable processes, see Döring and Kyprianou \cite{Doer_Kyp_01}.\smallskip

In this article we will derive (strictly) excessive functions for the stable process killed on entering an interval, without loss of generality the interval $[-1,1]$, i.e. functions  $v: \mR \setminus [-1,1] \rightarrow (0,\infty)$ such that
\begin{align}\label{def_exc}
	\mE^x \big[ \1_{\{ t < T_{[-1,1]} \}} v(\xi_{t}) \big] \leq v(x), \quad x \notin [-1,1],t\geq 0.
\end{align}
Unfortunately, the corresponding $h$-transformed process is not self-similar and hence, we can not follow the strategy of \cite{Kyp_Riv_Sat_01} to show that this process hits the interval continuously. A second example for a process conditioned to be absorbed by a set due to Chaumont \cite{Chau_01} uses another way of showing continuous absorption and it will turn out that this way is the right one in our setting, too. Under some assumptions the author conditioned a Lévy process to be continuously absorbed by $0$ from above, i.e. to hit $(-\infty,0]$ continuously from the outside. The tool which was used is again a Doob $h$-transform with an excessive function $u:(0,\infty) \rightarrow (0,\infty)$ which has the additional condition that, for any compact $K \subseteq (0,\infty)$,
\begin{align}
\mE^x \big[ \1_{\{ T_{K^\mathrm{C}} < T_{(-\infty,0]} \}} u(\xi_{T_{K^\mathrm{C}}}) \big] = u(x), \quad x \notin [-1,1].
\end{align}
Such an excessive function is called harmonic. In Silverstein \cite{Sil_01} it was shown that in Chaumont's setting the role of $u$ is played by the potential density of the dual ladder height process. Considering the $h$-transformed process which we denote by $(\xi,\mP^x_{u})$ and the killing time by $\zeta$ one sees that
\begin{align}\label{h-trafo_Chau}
	\mP^x_{u}(T_{K^\mathrm{C}}<\zeta) = \mE^x \Big[ \1_{\{ T_{K^\mathrm{C}} < T_{(-\infty,0]} \}} \frac{u(\xi_{T_{K^\mathrm{C}}})}{u(x)} \Big] = 1.
\end{align}
This shows that the $h$-transformed process leaves all compact sets before it is killed. Chaumont even went further and extended (\ref{h-trafo_Chau}) to sets of the form $K=[a,\infty)$ which shows that the $h$-transformed process hits any set of the form $(0,a),a >0$, before killing, thus, absorption at $0$ is continuous.\smallskip

Before presenting our results, we introduce the most important definitions. More details can be found, for example, in Chung and Walsh \cite{Chu_Wal_01}, Bertoin \cite{Bert_01}, Kyprianou \cite{Kyp_03}
 or Sato \cite{Sat_01}.\smallskip

\textbf{Stable processes:} We consider the canonical process ${\xi}$ on the space of càdlàg paths equipped with the $\sigma$-algebra $\cF$ induced by the Skorohod topology. We denote by $\mP^x$ the probability measure on the path space that makes ${\xi}$ a stable process started from $x \in \mR$. Stable processes are L\'evy processes that fulfill the scaling property
\begin{align}\label{scaling}
\left((c {\xi}_{c^{-\alpha}t})_{t \geq 0},\mP^x \right) \overset{(d)}{=} \left(({\xi}_{t})_{t \geq 0},\mP^{cx}\right)
\end{align}
for all $x \in \mR$ and $c>0$, where $\alpha$ is the index of self-similarity. It turns out to be necessary that $\alpha\in (0,2]$ with $\alpha=2$ corresponding to the Brownian motion. The continuity of sample paths excludes the Brownian motion from our study so we restrict to $\alpha\in (0,2)$. As a Lévy process stable processes are characterised entirely by the L\'evy triplet. For $\alpha<2$, the linear and Brownian part vanish and the L\'evy measure is
\begin{align*}
	\Pi(\dd x) = \frac{\Gamma(\alpha+1)}{\pi} \left\{
\frac{ \sin(\pi \alpha \rho) }{ x^{\alpha+1}} \1_{\{x > 0\}} + \frac{\sin(\pi \alpha \hat\rho)}{ {|x|}^{\alpha+1} }\1_{\{x < 0\}}
 \right\}\dd x,\quad  x \in \mathbb{R},
\end{align*}
where 
$\rho:=\mP^0({\xi}_1 \geq 0)$ is the positivity parameter. For $\alpha \in (0,1)$ we exclude the case $\rho \in \lc 0,1 \rc$, in which case $\xi$ is (the negative of) a subordinator. For $\alpha \in (1,2)$ it is know that $\rho\in[{1}/{\alpha}, 1- {1}/{\alpha} ]$ and we exclude the boundary cases $\rho \in \left\lbrace {1}/{\alpha}, 1- {1}/{\alpha} \right\rbrace$ in which case $\xi$ has one-sided jumps.  For $\alpha=1$ we consider the symmetric Cauchy process excluding drift. The normalisation was chosen so that the characteristic exponent satisfies
\[
\mathbb{E}^x[{ e}^{{ i}\theta( {\xi}_1-x)}] = { e}^{-|\theta|^\alpha}, \quad \theta\in\mathbb{R}.
\]

An important fact we will use for the parameter regimes is that the stable process exhibits (set) transience and (set) recurrence according to whether $\alpha\in (0,1)$ or $\alpha\in[1,2)$. When $\alpha\in(1,2)$ the notion of recurrence is even stronger in the sense that fixed points are hit with probability one. \smallskip

\textbf{Killed L\'evy processes and $h$-transforms:}
The killed transition measures are defined as
$$p^{[-1,1]}_t(x,\dd y)=\mP^x(\xi_t \in \dd y, t< T_{[-1,1]}), \quad t\geq 0.$$
The corresponding sub-Markov process is called the L\'evy process killed in $[-1,1]$.
An excessive function for the killed process is a measurable function $v : \mR \backslash [-1,1] \rightarrow [0,\infty)$ such that
\begin{align}\label{eq_def_harm}
	\mE^x \big[ \1_{\{ t < T_{[-1,1]} \}} v(\xi_t) \big] \leq v(x),
	\quad x \in \mR \backslash [-1,1], t \geq 0.
\end{align}
An excessive function taking only strictly positive values is called a positive excessive function. When $v$ is a positive excessive function,  the associated Doob $h$-transform is defined via the change of measure
\begin{align}\label{def_htrafo}
	\mP_v^x(\Lambda, t < \zeta) := \mE^x \Big[ \1_\Lambda \1_{\{ t < T_{[-1,1]} \}} \frac{v(\xi_t)}{v(x)}  \Big],\quad x\in\mR \backslash [-1,1],
\end{align}
for $\Lambda \in \cF_t$, where $\zeta$ is the (possibly infinite) killing time of the process.
From Chapter 11 of Chung and Walsh \cite{Chu_Wal_01}, we know that under $\mP_v^{x}$ the canonical process is a strong Markov process and that (\ref{def_htrafo}) extends from deterministic times to $(\cF_t)_{t \geq 0}$-stopping times $T$;
that is,
\begin{align}\label{eq_htrafo_stoppingtime}
\mP_{v}^x(\Lambda, T <\zeta) = \mE^x \Big[\1_\Lambda \1_{\{ T<T_{[a,b]} \}} \frac{v(\xi_T)}{v(x)} \Big], \quad x \notin [-1,1],
\end{align}
for $\Lambda\in \mathcal F_T$. An excessive function $v: \mR \setminus [-1,1] \rightarrow (0,\infty)$ which fulfills
\begin{align}\label{def_harmonic}
\mE^x \Big[ \1_{\{ T_{K^\mathrm{C}} < T_{[-1,1]} \}} v(\xi_{T_{K^\mathrm{C}}}) \Big] = v(x), \quad x \notin [-1,1],
\end{align}
for all compact $K \subseteq \mR\setminus [-1,1]$ is called a positive harmonic function. 
\begin{remark}
The terminology of a harmonic function is not used consistently in the literature. In many articles (also including \cite{Doer_Kyp_Wei_01}) the notion of a harmonic function coincides with the notion of an invariant function in the sense of (\ref{invariant}). Here, we will always use the notion of a harmonic function for an excessive function which fulfills the additional condition \eqref{def_harmonic}. 
The crucial point on positive harmonic functions in the sense of this article is that the $h$-transform leaves all compact sets before being killed, see \eqref{h-trafo_Chau}.
\end{remark}

\section{Main results}
The main results of this article are two-fold. We first identify new harmonic functions (in the sense of \eqref{def_harmonic}) for the  stable processes killed in the unit interval. From these harmonic functions we define $h$-transformed measures which we then identify as the limiting measures of suitable conditionings that force the process to be absorbed at the boundary of the interval. The different possible cases of absorption at the top or the bottom of the interval will be reflected in the existence of different harmonic functions and their linear combinations.

\subsection{Harmonic functions} \label{sec_harm}
In this first section we identify two (minimal) harmonic functions. Let us define two functions $v_1, v_2: \mR \setminus [-1,1] \rightarrow (0,\infty)$ by 
$$v_1(x):= 
\begin{cases}
\sin(\pi\alpha\hat{\rho}) \Big[(x+1)\psi_{\alpha\rho}(x) - (\alpha-1)_+ \int\limits_1^x \psi_{\alpha\rho}(u) \, \dd u \Big]  \quad &\text{if } x>1 \\
\sin(\pi\alpha\rho) \Big[(|x|-1) \psi_{\alpha\hat{\rho}}(|x|) - (\alpha-1)_+ \int\limits_1^{|x|} \psi_{\alpha\hat{\rho}}(u) \, \dd u\Big]  \quad &\text{if } x<-1
\end{cases},
$$
and
$$v_{-1}(x):= 
\begin{cases}
\sin(\pi\alpha\hat{\rho}) \Big[(x-1) \psi_{\alpha\rho}(x) - (\alpha-1)_+ \int\limits_1^x \psi_{\alpha\rho}(u) \, \dd u \Big] \quad &\text{if } x>1 \\
\sin(\pi\alpha\rho) \Big[(|x|+1) \psi_{\alpha\hat{\rho}}(|x|) - (\alpha-1)_+ \int\limits_1^{|x|} \psi_{\alpha\hat{\rho}}(u) \, \dd u \Big]  \quad &\text{if } x<-1
\end{cases}.
$$
The appearing auxiliary functions
\begin{align*}
	\psi_{\alpha\rho}(x)= (x-1)^{\alpha\hat\rho-1}(x+1)^{\alpha\rho-1}, \quad x >1,
\end{align*}
already played a crucial rule to condition the stable processes to avoid an interval 	in \cite{Doer_Kyp_Wei_01}. For the function $\psi_{\alpha\hat\rho}$ the positivity parameter $\rho$ is replaced by $\hat\rho$, and vice versa.\smallskip

Here is the main result of this section:
\begin{theorem} \label{thm_harm}
Let $\xi$ be a stable process with index $\alpha \in (0,2)$ which has jumps in both directions. Then $v_1$ and $v_{-1}$ are harmonic functions for $\xi$ killed on first hitting the interval $[-1,1]$.
\end{theorem}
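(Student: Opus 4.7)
The plan is to realise $v_1$ and $v_{-1}$ as Martin-kernel-type renormalisations of the first-hitting distribution of $[-1,1]$, and then to read off both the excessiveness and the stronger identity \eqref{def_harmonic} from the strong Markov property applied to this distribution.

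First I would recall, or derive from the machinery of \cite{Doer_Kyp_Wei_01}, the explicit law of $\xi_{T_{[-1,1]}}$ under $\mP^x$ for $x\notin[-1,1]$. For $x>1$ this law consists of an absolutely continuous part on $(-1,1)$, whose density factorises as a product of $\psi_{\alpha\rho}$-type terms at $x$ and $(1-y)^{\alpha\hat\rho-1}(1+y)^{\alpha\rho-1}$-type terms at $y$, together with an atomic part at $\{\pm 1\}$ that is present precisely when $\alpha>1$ (the regime in which a point is hit with positive probability). The dependence on $x$ should already show, at this stage, the factor $(x-1)^{\alpha\hat\rho-1}(x+1)^{\alpha\rho-1}$ appearing in $\psi_{\alpha\rho}$.

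Next I would define $v_1(x)$ as the sum of (i) the limit of $(1-y)^{1-\alpha\hat\rho}\mP^x(\xi_{T_{[-1,1]}}\in \dd y)/\dd y$ as $y\uparrow 1$, and (ii) a suitable constant multiple of the atomic mass $\mP^x(\xi_{T_{[-1,1]}}=1)$, with the analogous construction at $y=-1$ yielding $v_{-1}(x)$. A direct computation, using standard beta integrals for the densities and Fubini for the atom, should match these quantities with the explicit formulas in the statement; the coefficient $(\alpha-1)_+$ serves as a clean bookkeeping device for the transient-versus-recurrent dichotomy, switching on exactly in the $\alpha>1$ regime when the atomic contribution is nonzero.

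The harmonic property then follows from the strong Markov property at $T_{K^\mathrm{C}}$: for any compact $K\subseteq \mR\setminus[-1,1]$ and any Borel $A\subseteq[-1,1]$,
$$\mP^x\bigl(\xi_{T_{[-1,1]}}\in A\bigr) = \mE^x\bigl[\1_{\{T_{K^\mathrm{C}}<T_{[-1,1]}\}}\mP^{\xi_{T_{K^\mathrm{C}}}}\bigl(\xi_{T_{[-1,1]}}\in A\bigr)\bigr].$$
Taking $A$ to be a shrinking neighbourhood of $\{1\}$, normalising by the density factor and the atom as in Step~2, and passing to the limit inside the expectation (justified by dominated convergence, since $K$ being bounded and bounded away from $[-1,1]$ gives uniform upper bounds on the integrand as a function of the exit position $\xi_{T_{K^\mathrm{C}}}$) produces exactly \eqref{def_harmonic} for $v_1$. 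The argument for $v_{-1}$ is symmetric. The excessive inequality \eqref{eq_def_harm} is then either read off from harmonicity by exhausting $\mR\setminus[-1,1]$ with compacts $K_n$ and applying Fatou's lemma, or proved directly by the same type of identity with $T_{K^\mathrm{C}}$ replaced by a deterministic time $t$.

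The main obstacle is Step~2: isolating the correction term with coefficient $(\alpha-1)_+$. One must carefully balance the blow-up of the density as $y\uparrow 1$ against the mass of the atom at $\{1\}$ (present only when $\alpha>1$) and verify that their combined limit produces exactly the integral $\int_1^x \psi_{\alpha\rho}(u)\,\dd u$ with the correct prefactor $\sin(\pi\alpha\hat\rho)$. This will require identifying the right normalising constant from the explicit beta-integral structure of the exit density and matching it against the atomic contribution. Once Step~2 is in place, Steps~3 and~4 are conceptually routine but still demand a careful verification that the dominated convergence hypotheses hold uniformly in the admissible values of $\xi_{T_{K^\mathrm{C}}}$.
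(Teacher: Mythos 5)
Your route differs genuinely from the paper's (which renormalises the Green function $u_{[-1,1]}(x,y)$ as $y\searrow 1$ and gets excessiveness from a dual excessive-measure argument, harmonicity from Kunita--Watanabe), and the skeleton --- renormalised hitting distribution of the interval, strong Markov property, Fatou for the excessive inequality at deterministic times --- could be made to work; indeed the paper's Propositions \ref{prop_as_<1} and \ref{prop_as_geq1} confirm that your Step 2 identification of $v_1$ is achievable. But Step 3 has two concrete gaps. The first is that the displayed identity is false: since $\xi$ has jumps, it can enter $[-1,1]$ directly from $K$ without first visiting $K^\mathrm{C}$, so the strong Markov property at $T_{K^\mathrm{C}}$ gives
\begin{align*}
\mP^x\bigl(\xi_{T_{[-1,1]}}\in A\bigr) = \mE^x\bigl[\1_{\{T_{K^\mathrm{C}}<T_{[-1,1]}\}}\mP^{\xi_{T_{K^\mathrm{C}}}}\bigl(\xi_{T_{[-1,1]}}\in A\bigr)\bigr] + \mP^x\bigl(\xi_{T_{[-1,1]}}\in A,\ T_{[-1,1]}<T_{K^\mathrm{C}}\bigr),
\end{align*}
and the second summand is strictly positive in general. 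To rescue the argument you must show it is negligible under your normalisation as $A$ shrinks to $\{1\}$ (plausible, since jumps from $K$ into a set of Lebesgue measure $1-y$ contribute $O(1-y)$, which beats $(1-y)^{1-\alpha\hat\rho}$, plus a separate treatment of the atom at $1$ when $\alpha>1$), but this is an extra estimate, not a consequence of the Markov property.

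The second gap is the dominated convergence step. $K$ is bounded away from $[-1,1]$, but $K^\mathrm{C}=(\mR\setminus[-1,1])\setminus K$ is not: $\xi_{T_{K^\mathrm{C}}}$ can land at points $z>1$ arbitrarily close to $1$, and there your renormalised integrand converges to (a constant times) $v_1(z)$, which blows up like $(z-1)^{\alpha\hat\rho-1}$. So the claimed ``uniform upper bounds on the integrand as a function of the exit position'' do not exist. What is actually required is a dominating function with exactly this singularity at $1^+$ together with a proof that it is integrable against the exit law of $K$ concentrated near $1$ --- this is precisely the hard technical core of the paper's proof (the verification of \eqref{beh_1} and \eqref{beh_2}, carried out via the explicit overshoot densities of Kyprianou--Pardo--Watson). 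Until you supply these two estimates, the harmonicity identity \eqref{def_harmonic} is not established.
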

As described in the introduction a harmonic function is in particular excessive, hence, a new measure can be defined as an $h$-transform with the harmonic function. In what follows we will denote the $h$-transforms with $v_1$, $v_{-1}$ and $v:=v_1+v_{-1}$ by $\mP^x_{v_1}$, $\mP^x_{v_{-1}}$ and $\mP^x_{v}$.

\subsection{Stable processes absorbed from above (or below)} \label{sec_abs_above}
The purpose of this section is to analyse the $h$-transformed process $(\xi,\mP^x_{v_{1}})$. Since all results for $(\xi,\mP^x_{v_{-1}})$ are analogous (replacing $\rho$ and $\hat \rho$) without loss of generality we only discuss $(\xi,\mP^x_{v_{1}})$. Two questions will be our main concern:
\begin{itemize}
	\item Is the process killed in finite time and, if so, what is the limiting behavior at the killing time?
	\item How to characterize $\mP^x_{v_{1}}$ through a limiting conditioning of $\mP^x$?
\end{itemize}
The first question can be answered for all $\alpha$ simultaneously using properties of the explicit form of $v_1$:
\begin{proposition}\label{thm_abs_above}
Let $\xi$ be an $\alpha$-stable process with $\alpha \in (0,2)$ and both sided jumps, then
$$\mP_{v_1}^x(\zeta < \infty, \xi_{\zeta-} =1)=1, \quad x \notin [-1,1].$$
\end{proposition}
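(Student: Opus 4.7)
The strategy follows the framework introduced by Chaumont in \cite{Chau_01}. Since $v_1$ is harmonic by Theorem \ref{thm_harm}, inserting \eqref{def_harmonic} into \eqref{eq_htrafo_stoppingtime} immediately yields
\[
\mP^x_{v_1}(T_{K^{\mathrm{C}}} < \zeta) = 1
\]
for every compact $K \subseteq \mR \setminus [-1,1]$, which means that under $\mP^x_{v_1}$ the canonical process exhausts $\mR \setminus [-1,1]$ before being killed. It remains to rule out escape to $\pm\infty$ and absorption at $-1$, and to promote the approach at $1$ to a finite-time continuous one.

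The first preparatory step is to read off the asymptotic behavior of $v_1$ from its explicit form. As $x \downarrow 1$ the first summand dominates and $v_1(x) \sim c_1(x-1)^{\alpha\hat\rho-1}$, which blows up since $\alpha\hat\rho < 1$ throughout the parameter range. As $x \uparrow -1$ one finds $v_1(x) \sim c_2(|x|-1)^{\alpha\rho} \to 0$. As $|x| \to \infty$ the leading $x^{\alpha-1}$ contributions of the two summands cancel for $\alpha > 1$, leaving $v_1(x) = O(x^{\alpha-2})$; for $\alpha < 1$ the integral term is absent and $v_1(x) = O(x^{\alpha-1})$; only in the Cauchy case $\alpha = 1$ does $v_1$ remain bounded away from zero at infinity.

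With these asymptotics the Chaumont-type decomposition does the rest. For $K_{\epsilon,R} := [-R,-1-\epsilon] \cup [1+\epsilon,R]$ the exit set $K_{\epsilon,R}^{\mathrm{C}} \cap (\mR \setminus [-1,1])$ decomposes into $B_\epsilon := (-1-\epsilon,-1) \cup (1,1+\epsilon)$ and $A_R := (-\infty,-R) \cup (R,\infty)$. For $\alpha \neq 1$ the inequality
\[
\mP^x_{v_1}\!\left(\xi_{T_{K_{\epsilon,R}^{\mathrm{C}}}} \in A_R\right) \leq \frac{1}{v_1(x)} \sup_{|y| \geq R} v_1(y)
\]
gives a bound tending to $0$ as $R \to \infty$; in the Cauchy case $\alpha = 1$ the same conclusion follows from boundedness of $v_1$ combined with set-recurrence, $\mP^x(T_{A_R} < T_{[-1,1]}) \to 0$. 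In either case $\mP^x_{v_1}(T_{B_\epsilon} < \zeta) = 1$ for every $\epsilon > 0$. A second application of \eqref{eq_htrafo_stoppingtime} to $T_{B_\epsilon^-}$ with $B_\epsilon^- := (-1-\epsilon,-1)$ bounds
\[
\mP^x_{v_1}(T_{B_\epsilon^-} < \zeta) \leq \frac{\sup_{y \in B_\epsilon^-} v_1(y)}{v_1(x)} = O(\epsilon^{\alpha\rho}),
\]
so after monotonicity in $\epsilon$ we obtain
\[
\mP^x_{v_1}\!\bigl(T_{B_\epsilon^+} < \zeta \text{ for every } \epsilon > 0\bigr) = 1, \qquad B_\epsilon^+ := (1, 1+\epsilon).
\]

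The conclusion follows from a path-limit argument. The stopping times $t_\epsilon := T_{B_\epsilon^+}$ are non-decreasing as $\epsilon \downarrow 0$ with $\xi_{t_\epsilon} \in (1, 1+\epsilon)$, so they converge to some $t^* \leq \zeta$ along which $\xi_{t_\epsilon} \to 1$. Since $\bigcap_{\epsilon>0}(1,1+\epsilon) = \emptyset$, the inequalities $t_\epsilon < t^*$ are strict, $t^*$ is predictable, and c\`adl\`ag-ness gives $\xi_{t^*-} = 1$. The jump times of $(\xi, \mP^x_{v_1})$ coincide with those of $\xi$ (the $h$-transform only reweights the L\'evy system) and are therefore totally inaccessible, so the predictable $t^*$ cannot be a jump time; consequently $\xi_{t^*} = \xi_{t^*-} = 1 \in [-1,1]$, which forces $\zeta \leq t^*$ and hence $t^* = \zeta$, yielding $\xi_{\zeta-} = 1$. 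The finiteness $\zeta < \infty$ a.s. then follows by a strong Markov argument at $t_\epsilon$: from a point of $B_\epsilon^+$ sufficiently close to $1$ the $\mP^{\cdot}_{v_1}$-probability of being killed within one time unit is bounded below uniformly, since the L\'evy measure charges $[-1,1] - y$ for $y$ near $1$. The main obstacle is expected to lie in this last paragraph: rigorously executing the predictable-limit step in all three parameter regimes and extracting finiteness of $\zeta$, with the Cauchy case $\alpha = 1$ requiring additional care due to the non-vanishing of $v_1$ at infinity.
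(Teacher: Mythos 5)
Your first two steps reproduce the paper's: harmonicity gives $\mP^x_{v_1}(T_{K^{\mathrm C}}<\zeta)=1$, and ruling out escape to $\pm\infty$ and approach to $-1$ (using $v_1(y)\to0$ as $y\nearrow-1$) yields $T_{(1,1+\eps)}<\zeta$ a.s.\ for every $\eps>0$. Your passage from there to $\xi_{\zeta-}=1$ — announcing $\zeta$ by the increasing stopping times $T_{(1,1+\eps)}$ and invoking quasi-left-continuity/predictability — is genuinely different from the paper's, which instead applies the strong Markov property at $T_{(1,1+\eps)}$ and uses that $v_1(1+\eps)\to\infty$ while $v_1$ is bounded off any neighbourhood of $1$, so that $\mP^{1+\eps}_{v_1}(T_{(1,1+\delta)^{\mathrm C}}<\zeta)\to0$. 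Your route is workable, but only if you actually prove quasi-left-continuity of $(\xi,\mP^x_{v_1})$ on $[0,\zeta)$ (it is a Hunt-process fact about $h$-transforms, in Chung--Walsh, not a statement about "reweighting the L\'evy system").

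There are two genuine gaps. The main one is the finiteness of $\zeta$. Your proposed mechanism — a uniform lower bound on $\mP^{y}_{v_1}(\zeta\le 1)$ for $y$ near $1$ "since the L\'evy measure charges $[-1,1]-y$" — does not work: the L\'evy measure controls killing of the \emph{original} process by a jump into $[-1,1]$, whereas under $\mP^{y}_{v_1}$ precisely those jumps are suppressed (that is the content of $\xi_{\zeta-}=1$), and the killing probability of the $h$-process equals $1-\mE^{y}\big[\1_{\{1<T_{[-1,1]}\}}v_1(\xi_1)\big]/v_1(y)$, for which you give no estimate. The paper's argument rests on an idea absent from your proposal: $U_{v_1}(x,\dd y)=v_1(y)u_{[-1,1]}(x,y)\,\dd y/v_1(x)$, and $U_{v_1}(x,[-b,-1)\cup(1,b])<\infty$ because $v_1(y)\sim(y-1)^{\alpha\hat\rho-1}$ is integrable against $u_{[-1,1]}(x,y)\to0$ at the endpoints; combined with a.s.\ boundedness of the paths this forces $\zeta<\infty$. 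The second gap is your asymptotics at infinity for $\alpha>1$: the claim $v_1(x)=O(x^{\alpha-2})$ is unsubstantiated — the paper only shows that $v_1$ converges at $\pm\infty$ (not that the limit is $0$) and for all $\alpha\ge1$ rules out escape to infinity by recurrence of $\xi$, exactly as you do for $\alpha=1$; you should use that argument throughout $\alpha\in[1,2)$ rather than rely on a decay of $v_1$ that is not proved.
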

To answer the second question we need to distinguish the recurrent and the transient cases:\\
\textbf{The case $\alpha<1$:} The probability that $\xi$ never hits the interval $[-1,1]$ is positive because the stable process is transient. To condition $\xi$ to be absorbed by $[-1,1]$ from above without hitting the interval we first condition on $\{T_{[-1,1]}=\infty\}$ and then on some event which describes the absorption from above. The most plausible event is $T_{(1,1+\eps)}$ being finite for small $\eps>0$. Another possibility refers to the so-called point of closest reach. Let therefore $\underline{m}$ be the time such that $|\xi_{\underline{m}}| \leq |\xi_t|$ for all $t \geq 0$. Then $\xi_{\underline{m}}$ is called the point of closest reach of $0$. The polarity of points for $\alpha<1$ implies $\xi_{\underline{m}} \neq 0$ almost surely under $\mP^x$ for all starting points $x \neq 0$. With these definitions one could also think of conditioning on the event $\{ \xi_{\underline{m}} \in (1,1+\eps) \}$ which is contained in $\{ T_{[-1,1]}=\infty, T_{(1,1+\eps)}<\infty \}$ and, indeed, this is the right choice. \smallskip

\textbf{The case $\alpha \geq 1$:} The first hitting time $T_{[-1,1]}$ is finite almost surely, hence, a different conditioning is needed. Since $T_{(-1-\eps,1+\eps)}$ is finite as well the good conditioning is to condition $\xi_{T_{(-1-\eps,1+\eps)}}$ to be in $(1,1+\eps)$ and then let $\eps$ tend to $0$.\smallskip

The techniques we use for the conditioning center around the recent results on the so-called deep factorisation of stable processes, see e.g. Kyprianou \cite{Kyp_02} and Kyprianou et al. \cite{Kyp_Riv_Sen_01} and hitting distributions of stable processes, see Kyprianou et al. \cite{Kyp_Par_Wat_01}. In particular, results on the distribution of the point of closest reach in the case $\alpha<1$ and the distribution of the first hitting time of the interval $(-1,1)$ in the case $\alpha \geq 1$ are the keys to prove our results.\smallskip

We come to the first characterisation of the $h$-transform $\mP_{v_1}^x$ as the process conditioned to be absorbed by $[-1,1]$ from above in a meaningful way.
\begin{theorem}\label{thm_cond_v1_<1}
Let $\xi$ be an $\alpha$-stable process with $\alpha\in (0,1)$ and both sided jumps. Then it holds, for all $x \notin [-1,1]$ and $\Lambda \in \cF_t$, that
$$\mP^{x}_{v_1}(\Lambda, t < \zeta) =  \lim_{\delta \searrow 0}\lim_{\eps \searrow 0} \mP^x(\Lambda, t< T_{(-(1+\delta),1+\delta)} \,|\, \xi_{\underline{m}} \in (1,1+\eps)).$$
\end{theorem}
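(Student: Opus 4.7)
The plan is to apply the Markov property at time $t$ so that the closest-reach event becomes a function of $\xi_t$, to pull out the leading-order $\eps$ of that function via the explicit distribution of $\xi_{\underline m}$, and finally to relax $\delta \searrow 0$ by monotone convergence. First, fix $0 < \eps < \delta$. On $\{t < T_{(-(1+\delta),1+\delta)}\}$ the pre-$t$ path satisfies $|\xi_s| \geq 1+\delta > 1+\eps$, so the global infimum of $|\xi|$ cannot be attained before $t$ and the event $\{\xi_{\underline m} \in (1, 1+\eps)\}$ is determined by the post-$t$ trajectory alone. Setting $h_\eps(y) := \mP^y(\xi_{\underline m} \in (1, 1+\eps))$ and applying the Markov property at $t$ yields
\begin{align*}
\mP^x(\Lambda,\, t < T_{(-(1+\delta),1+\delta)},\, \xi_{\underline m} \in (1, 1+\eps)) = \mE^x\bigl[ \1_\Lambda \1_{\{t < T_{(-(1+\delta),1+\delta)}\}}\, h_\eps(\xi_t) \bigr],
\end{align*}
so that the conditional probability in the statement equals $\mE^x[\1_\Lambda \1_{\{t < T_{(-(1+\delta),1+\delta)}\}}\, h_\eps(\xi_t)/\eps]\big/(h_\eps(x)/\eps)$.

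The key analytic input is the explicit density $\pi(y,z)$ of $\xi_{\underline m}$ under $\mP^y$, available from the deep factorisation of stable processes in \cite{Kyp_Par_Wat_01,Kyp_Riv_Sen_01}. The central claim is that, for every $y \notin [-1,1]$, the map $z \mapsto \pi(y,z)$ extends continuously to $z = 1+$ and satisfies
\begin{align*}
\pi(y,1+) = c\, v_1(y)
\end{align*}
for a universal constant $c>0$; this is precisely the calculation that dictates the two-piece shape of $v_1$ from Section~\ref{sec_harm} (noting $(\alpha-1)_+ = 0$ throughout this regime). Granted this identity, $h_\eps(y)/\eps \to c\, v_1(y)$ as $\eps \searrow 0$, locally uniformly on $\{|y| \geq 1+\delta\}$, and dominated convergence (with a uniform bound furnished by the local boundedness of $\pi(y,z)$ on $\{|y|\geq 1+\delta\}\times[1,2]$) produces
\begin{align*}
\lim_{\eps \searrow 0} \mP^x(\Lambda,\, t < T_{(-(1+\delta),1+\delta)} \mid \xi_{\underline m} \in (1, 1+\eps)) = \frac{\mE^x\bigl[\1_\Lambda \1_{\{t < T_{(-(1+\delta),1+\delta)}\}}\, v_1(\xi_t)\bigr]}{v_1(x)}.
\end{align*}

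Finally, I send $\delta \searrow 0$. Transience in the regime $\alpha<1$ combined with polarity of points implies that $T_{(-(1+\delta),1+\delta)}$ increases almost surely to $T_{[-1,1]}$: on $\{T_{[-1,1]} = \infty\}$ the point of closest reach exists and $|\xi_{\underline m}| > 1$ strictly, whereas on $\{T_{[-1,1]} < \infty\}$ the process enters $[-1,1]$ by a jump landing a.s.\ in the open interval $(-1,1)$. Monotone convergence of $\1_{\{t < T_{(-(1+\delta),1+\delta)}\}}$ up to $\1_{\{t < T_{[-1,1]}\}}$ then identifies the limit with $\mE^x[\1_\Lambda \1_{\{t < T_{[-1,1]}\}} v_1(\xi_t)]/v_1(x) = \mP^x_{v_1}(\Lambda,\, t < \zeta)$, as required.

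The main obstacle is the identification $\pi(y,1+) = c\, v_1(y)$; everything else reduces to the Markov property and standard convergence arguments. All of the analytic content of the theorem is therefore concentrated in this boundary-value computation, which must extract the closest-reach density from the deep factorisation and match it, term by term, against the definition of $v_1$.
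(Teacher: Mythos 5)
Your skeleton is exactly the paper's: apply the Markov property at time $t$ (using $\eps<\delta$ to ensure $\underline m>t$ on the relevant event), replace the ratio $\mP^{\xi_t}(\xi_{\underline m}\in(1,1+\eps))/\mP^{x}(\xi_{\underline m}\in(1,1+\eps))$ by $v_1(\xi_t)/v_1(x)$ in the $\eps$-limit via dominated convergence, and then send $\delta\searrow 0$. However, as you yourself concede, the entire analytic content of the theorem --- the identity $\lim_{\eps\searrow 0}\eps^{-1}\mP^{y}(\xi_{\underline m}\in(1,1+\eps))=c\,v_1(y)$ --- is asserted rather than proved, so the proposal is incomplete precisely where the work lies. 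In the paper this is Proposition \ref{prop_as_<1}: one takes the explicit law of $\xi_{\underline m}$ from Proposition 1.1 of \cite{Kyp_Riv_Sen_01}, rewrites $\mP^{y}(\xi_{\underline m}\in(1,1+\eps))$ for $y>1$ by a substitution as a constant times $\int_{1\vee y/(1+\eps)}^{y}(1+\tfrac1z)\psi_{\alpha\rho}(z)\,\dd z$, applies l'H\^opital's rule in $\eps$ to obtain $(y+1)\psi_{\alpha\rho}(y)=v_1(y)/\sin(\pi\alpha\hat\rho)$, and treats $y<-1$ by duality, which is exactly where the $\sin(\pi\alpha\rho)$ branch of $v_1$ comes from. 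Note the constant $c$ is not free: it must come out the same on both half-lines, and this is what forces the particular $\sin$-prefactors in the definition of $v_1$; a proof that does not perform this matching has not established the theorem.

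A second, smaller gap concerns your domination argument: ``local boundedness of $\pi(y,z)$ on $\{|y|\ge 1+\delta\}\times[1,2]$'' is not sufficient as stated, because $\{|y|\ge 1+\delta\}$ is unbounded and you need a bound on $h_\eps(y)/\eps$ that is uniform over this entire set. The paper obtains one from the monotone structure of the density, namely $\eps^{-1}\mP^{y}(\xi_{\underline m}\in(1,1+\eps))\le C\,v\big(y/(1+\eps)\big)\le C\sup_{|u|>1+\delta/2}v(u)<\infty$, which uses that $v$ is bounded away from its poles (including the behaviour as $|y|\to\infty$, where $v_1(y)\to0$ for $\alpha<1$). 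Your final step, $T_{(-(1+\delta),1+\delta)}\nearrow T_{[-1,1]}$ almost surely followed by monotone convergence, is fine and is essentially the argument left implicit in the paper.
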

In fact, we prove a slightly more general statement which has precisely the form of a self-similar Markov process conditioned to be absorbed at the origin in Kyprianou et al. \cite{Kyp_Riv_Sat_01} and a L\'evy process conditioned to be absorbed at the origin from above in Chaumont \cite{Chau_01}:
\begin{align}\label{aa}
	\mP^{x}_{v_1}(\Lambda, t < T_{(-(1+\delta),1+\delta)}) = \lim_{\eps \searrow 0} \mP^x(\Lambda, t< T_{(-(1+\delta),1+\delta)} \,|\, \xi_{\underline{m}} \in (1,1+\eps))
\end{align}
for all $\delta>0$.\smallskip


In the case $\alpha \geq 1$ the $h$-transform belongs to a different conditioned process.
\begin{theorem}\label{thm_cond_v1_geq1}
Let $\xi$ be an $\alpha$-stable process with $\alpha \in [1,2)$ and both sided jumps. Then it holds, for all $x \notin [-1,1]$ and $\Lambda \in \cF_t$, that
$$\mP^{x}_{v_1}(\Lambda, t < \zeta) = \lim_{\delta \searrow 0} \lim_{\eps \searrow 0} \mP^x(\Lambda, t< T_{(-(1+\delta),1+\delta)} \,|\xi_{T_{(-(1+\eps),1+\eps)}} \in (1,1+\eps)).$$
\end{theorem}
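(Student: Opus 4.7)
The plan is to follow the same pattern as in Theorem \ref{thm_cond_v1_<1}, replacing the role of the point of closest reach with the entrance distribution of $\xi$ into $(-1,1)$ from the outside. Concretely, I would first prove the $\delta$-fixed intermediate statement
\begin{equation*}
\mP^{x}_{v_1}(\Lambda, t < T_{(-(1+\delta),1+\delta)}) = \lim_{\eps \searrow 0} \mP^x\big(\Lambda, t < T_{(-(1+\delta),1+\delta)} \,\big|\, \xi_{T_{(-(1+\eps),1+\eps)}} \in (1,1+\eps)\big),
\end{equation*}
and then let $\delta\searrow 0$: by Proposition \ref{thm_abs_above}, under $\mP^x_{v_1}$ one has $\xi_{\zeta-}=1$, which forces $T_{(-(1+\delta),1+\delta)}\nearrow\zeta$ as $\delta\searrow 0$, so monotone convergence gives the result.

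For the $\delta$-fixed statement, fix $0 < \eps < \delta$ so that $T_{(-(1+\delta),1+\delta)} \leq T_{(-(1+\eps),1+\eps)}$. Since $\{t < T_{(-(1+\delta),1+\delta)}\}\in\cF_t$ and $\xi_t\notin(-(1+\delta),1+\delta)$ on this event, the strong Markov property at time $t$ gives
\begin{equation*}
\mP^x\big(\Lambda, t < T_{(-(1+\delta),1+\delta)}, \xi_{T_{(-(1+\eps),1+\eps)}} \in (1,1+\eps)\big) = \mE^x\big[\1_\Lambda \1_{\{t < T_{(-(1+\delta),1+\delta)}\}}\, q_\eps(\xi_t)\big],
\end{equation*}
where $q_\eps(y) := \mP^y(\xi_{T_{(-(1+\eps),1+\eps)}} \in (1,1+\eps))$. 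Dividing the numerator by $q_\eps(x)$ yields the conditional probability on the right of the theorem, so the whole task reduces to analysing the asymptotic behaviour of $q_\eps$ as $\eps \searrow 0$.

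The central technical step is the asymptotic
\begin{equation*}
q_\eps(y) = C_\alpha\, v_1(y)\, \eps^{1-\alpha\hat\rho}\, (1+o(1)),\qquad \eps \searrow 0,
\end{equation*}
uniformly on compact subsets of $\mR\setminus[-1,1]$, with a universal constant $C_\alpha>0$. The scaling (\ref{scaling}) reduces this to integrating the density of $\xi_{T_{(-1,1)}}$ from \cite{Kyp_Par_Wat_01} over the sliver $(1/(1+\eps),1)$. The boundary singularity of that density is of the form $(1-u)^{-\alpha\hat\rho}$; a change of variables $u = 1 - \eps s/(1+\eps)$ then produces the factor $\eps^{1-\alpha\hat\rho}$, while the $y$-dependent prefactor in the explicit formula converges, as $u\to 1^-$, to exactly the bracket $(y+1)\psi_{\alpha\rho}(y) - (\alpha-1)\int_1^y \psi_{\alpha\rho}(s)\,\dd s$ defining $v_1(y)$ on $(1,\infty)$, and to the $\rho\leftrightarrow\hat\rho$ analogue defining $v_1(y)$ on $(-\infty,-1)$, with the same universal prefactor.

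Inserting this into the Markov decomposition, the prefactor $C_\alpha\,\eps^{1-\alpha\hat\rho}$ cancels between numerator and denominator. The interchange of limit and expectation is justified by dominated convergence, using $q_\eps\leq 1$ and the dominating function $v_1(\xi_t)\1_{\{t<T_{[-1,1]}\}}$, which is $\mP^x$-integrable because $v_1$ is excessive for the process killed in $[-1,1]$. The limit of the numerator is $\mE^x[\1_\Lambda\1_{\{t<T_{(-(1+\delta),1+\delta)}\}} v_1(\xi_t)]$, which is exactly $v_1(x)\,\mP^x_{v_1}(\Lambda, t<T_{(-(1+\delta),1+\delta)})$ by \eqref{def_htrafo}. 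The main obstacle is the uniform quantitative control in the asymptotic of $q_\eps$: although the hitting density from \cite{Kyp_Par_Wat_01} is explicit, one must isolate the correct power $\eps^{1-\alpha\hat\rho}$, check that the remainder is of smaller order uniformly on $\{|y|\geq 1+\delta\}$, and verify that the $y<-1$ counterpart produces the same universal constant $C_\alpha$ so that $v_1$ emerges as a single harmonic function across both components of $\mR\setminus[-1,1]$.
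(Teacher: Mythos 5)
Your proposal is correct and follows essentially the same route as the paper: the Markov property at time $t$ reduces everything to the asymptotics of $q_\eps(y)=\mP^y(\xi_{T_{(-(1+\eps),1+\eps)}}\in(1,1+\eps))$, which the paper computes (Proposition \ref{prop_as_geq1}) from the hitting distribution of \cite{Kyp_Par_Wat_01} via scaling and l'Hopital, obtaining exactly your $C_\alpha v_1(y)\eps^{1-\alpha\hat\rho}$ with $C_\alpha=(1-\alpha\hat\rho)/(2^{\alpha\rho}\pi)$, and then passes the limit inside by dominated convergence using a bound of the form $q_\eps(y)/\eps^{1-\alpha\hat\rho}\leq C\,v_1(y/(1+\eps))\leq C_\delta$ valid on all of $\{|y|\geq 1+\delta\}$ (as you correctly flag, uniformity on compacts alone would not suffice since $\xi_t$ is unbounded on $\{t<T_{(-(1+\delta),1+\delta)}\}$). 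Your explicit treatment of the outer $\delta$-limit via Proposition \ref{thm_abs_above} is, if anything, slightly more complete than the paper's.
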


With this result we can interpret the $h$-transformed process as the original process conditioned to approach the interval $[-1,1]$ continuously from above.\smallskip

For $\alpha >1$ we can even find a second characterisation of $\mP^x_{v_1}$ as conditioned process. We need to introduce the stable process conditioned to avoid $0$ (see e.g. Pantí \cite{Pan_01} or Yano \cite{Yan_01} for general Lévy processes). For this sake define $e: \mR \setminus \lc 0 \rc \rightarrow (0,\infty)$ via
$$ 
e(x) = \begin{cases}
\sin(\pi\alpha\hat\rho) x^{\alpha-1} \quad &\text{if } x>0 \\
\sin(\pi\alpha\rho) |x|^{\alpha-1} \quad &\text{if } x<0
\end{cases},
$$
which is known to be a positive invariant function for the process killed on hitting $0$ when $\alpha >1$. Denote the underlying $h$-transform by $\mP^x_\circ$, i.e. 
$$\mP^x_\circ(\Lambda) = \mE^x \Big[\1_{\{ t < T_{\{0\}} \}} \frac{e(\xi_t)}{e(x)} \Big], \quad x \neq 0, \Lambda \in \cF_t,$$
which can be shown to correspond to conditioning the stable process to avoid the origin. We can use $\mP^x_\circ$ to give a conditioning analogously to the case $\alpha<1$ also in the case $\alpha >1$. But here the conditioning does not refer to the original process but to the process conditioned to avoid $0$. 

\begin{theorem}\label{thm_cond_v1_altern_>1}
Let $\xi$ be an $\alpha$-stable process with $\alpha \in (1,2)$ and both sided jumps. Then it holds, for all $x \notin [-1,1]$ and $\Lambda \in \cF_t$, that
$$\mP^{x}_{v_1}(\Lambda, t < \zeta) = \lim_{\delta \searrow 0} \lim_{\eps \searrow 0} \mP_\circ^x(\Lambda, t< T_{(-(1+\delta),1+\delta)} \,|\,\xi_{\underline{m}} \in (1,1+\eps)).$$
\end{theorem}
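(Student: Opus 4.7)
The strategy parallels that of Theorem \ref{thm_cond_v1_<1}, with the original law $\mP^x$ replaced by the law $\mP^x_\circ$ of the stable process conditioned to avoid $0$. The key structural fact is that for $\alpha>1$ the process $(\xi,\mP^\cdot_\circ)$ is transient ($|\xi_t|\to\infty$ $\mP^x_\circ$-a.s., as can be read from the Lamperti representation on each half-line), so the point of closest reach $\xi_{\underline m}$ is well-defined and nonzero under $\mP^x_\circ$ for every $x\ne 0$. This is precisely the role played in the $\alpha<1$ case by the transience of $\xi$ itself under $\mP^x$.

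Fix $0<\eps<\delta$. On $\{t<T_{(-(1+\delta),1+\delta)}\}$ one has $|\xi_s|\geq 1+\delta>1+\eps$ for every $s\leq t$, so the event $\{\xi_{\underline m}\in(1,1+\eps)\}$ is determined entirely by the post-$t$ trajectory. Combining the strong Markov property of $(\xi,\mP^\cdot_\circ)$ at time $t$ with the identity $\mP^x_\circ(\,\cdot\,,t<T_{(-(1+\delta),1+\delta)})=e(x)^{-1}\mE^x[\,\cdot\,\1_{\{t<T_{(-(1+\delta),1+\delta)}\}}e(\xi_t)]$ (valid because $\{t<T_{(-(1+\delta),1+\delta)}\}\subseteq\{t<T_{\{0\}}\}$) gives
\begin{equation*}
\mP^x_\circ\bigl(\Lambda,\,t<T_{(-(1+\delta),1+\delta)}\,\big|\,\xi_{\underline m}\in(1,1+\eps)\bigr)
=\mE^x\!\left[\1_\Lambda\,\1_{\{t<T_{(-(1+\delta),1+\delta)}\}}\,\frac{e(\xi_t)}{e(x)}\,\frac{\mP^{\xi_t}_\circ(\xi_{\underline m}\in(1,1+\eps))}{\mP^x_\circ(\xi_{\underline m}\in(1,1+\eps))}\right].
\end{equation*}

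The decisive step is then to prove
\begin{equation*}
\lim_{\eps\searrow 0}\frac{\mP^y_\circ(\xi_{\underline m}\in(1,1+\eps))}{\mP^x_\circ(\xi_{\underline m}\in(1,1+\eps))}=\frac{v_1(y)/e(y)}{v_1(x)/e(x)},\qquad x,y\notin[-1,1].
\end{equation*}
For this I would invoke an explicit expression for the law of $\xi_{\underline m}$ under $\mP^\cdot_\circ$, extracted from the deep factorisation of the stable process (\cite{Kyp_02,Kyp_Riv_Sen_01}) together with the hitting-distribution identities of \cite{Kyp_Par_Wat_01}; the resulting density is expressed in terms of the same blocks $\psi_{\alpha\rho},\psi_{\alpha\hat\rho}$ that enter the definition of $v_1$, and direct inspection identifies its behaviour at $1$ with $v_1/e$ up to a common normalising constant. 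Once the limit is in place, the factor $e(\xi_t)/e(x)$ combines with $(v_1(\xi_t)/e(\xi_t))/(v_1(x)/e(x))$ to give $v_1(\xi_t)/v_1(x)$, and dominated convergence yields the inner ($\eps\searrow 0$) limit $\mE^x[\1_\Lambda\1_{\{t<T_{(-(1+\delta),1+\delta)}\}}v_1(\xi_t)/v_1(x)]$. The outer limit $\delta\searrow 0$ is then a monotone convergence statement, as $\1_{\{t<T_{(-(1+\delta),1+\delta)}\}}\uparrow\1_{\{t<T_{[-1,1]}\}}$ $\mP^x$-a.s., producing $\mP^x_{v_1}(\Lambda,t<\zeta)$.

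The principal obstacle is the identification of that limiting ratio. If the asymptotic density of $\xi_{\underline m}$ under $\mP^\cdot_\circ$ is not directly available, I would derive it by transferring the $\alpha<1$ analysis through the $e$-tilt: the Lamperti representation of $\mP^\cdot_\circ$ on each half-line is a Lévy process drifting to $+\infty$, exactly as for the Lamperti representation of $\xi$ itself in the $\alpha<1$ regime, so its overall infimum admits a fluctuation-theoretic description whose density at $1$ reproduces $v_1/e$. Alternatively, a time-reversal of $(\xi,\mP^\cdot_\circ)$ at $\underline m$ relates the pre-minimum trajectory to the stable process conditioned to stay positive, reducing the asymptotic ratio to the harmonic identity already verified in Theorem \ref{thm_harm}. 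In either route, the additional $1/e$ factor in the limiting density is exactly what is cancelled by the $e(\xi_t)/e(x)$ factor introduced by the conditioning to avoid $0$, producing the clean $h$-transform by $v_1$ as claimed.
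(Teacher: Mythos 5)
Your architecture is exactly the paper's: apply the Markov property under $\mP^x_\circ$ (using $\eps<\delta$ to ensure $\underline m>t$ on the relevant event), unwind the $e$-transform, identify the limit of the ratio $\mP^{\xi_t}_\circ(\xi_{\underline m}\in(1,1+\eps))/\mP^{x}_\circ(\xi_{\underline m}\in(1,1+\eps))$ with $(v_1(\xi_t)/e(\xi_t))\big/(v_1(x)/e(x))$, let the $e$-factors cancel, and finish with dominated convergence and the monotone outer limit. However, the step you yourself flag as "the principal obstacle" is the entire analytic content of the theorem, and you do not carry it out: you assert that the law of $\xi_{\underline m}$ under $\mP^\cdot_\circ$ has a density whose behaviour at $1$ "direct inspection identifies with $v_1/e$," but no such inspection is performed, and the two alternative routes (Lamperti drift, time-reversal at the minimum) are likewise left as plans. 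As it stands the proposal reduces the theorem to an unproved asymptotic, namely $\lim_{\eps\searrow0}\eps^{-1}\mP^y_\circ(\xi_{\underline m}\in(1,1+\eps))=\tfrac{\alpha-1}{2}\,v_1(y)/e(y)$, which is precisely Proposition \ref{prop_as_altern_>1} of the paper.

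The paper fills this gap by the Riesz--Bogdan--\.Zak transform: since $(\xi,\mP^\cdot_\circ)$ is the spatial inverse (plus time change) of the dual process, one has $\mP^x_\circ(\xi_{\underline m}\in(1,1+\eps))=\hat\mP^{1/x}\bigl(\xi_{\overline m}\in(\tfrac{1}{1+\eps},1)\bigr)$ with $\overline m$ the point of furthest reach before hitting $0$; Proposition 1.2 of \cite{Kyp_Riv_Sen_01} gives its law explicitly, a substitution collapses the density to $\tfrac{\alpha-1}{2\sin(\pi\alpha\hat\rho)}\,u^{-\alpha}v_1(u)$ on $(1\vee\tfrac{x}{1+\eps},x)$, and l'H\^opital produces the stated limit. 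This is close in spirit to your first suggested route, but the identification with $v_1/e$ is a genuine computation, not an inspection. A second, smaller omission: your dominated-convergence step needs a uniform-in-$\eps$ bound on $\eps^{-1}\mP^{\xi_t}_\circ(\xi_{\underline m}\in(1,1+\eps))$ on $\{t<T_{(-(1+\delta),1+\delta)}\}$; the paper extracts the bound $C_\delta|\xi_t|^{1-\alpha}$ from the same explicit integral formula and checks its $\mP^x_\circ$-integrability by undoing the $e$-tilt, so without the explicit law you have no dominating function either. To make the proposal a proof you must supply both the asymptotic and the bound.
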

It is quite remarkable to compare Theorem \ref{thm_cond_v1_altern_>1} and Theorem \ref{thm_cond_v1_<1}. Since conditioning to avoid a point has no effect for $\alpha< 1$ both theorems coincide. First condition to avoid the origin (trivial for $\alpha< 1$) then condition to approach $1$ from above yields $\mP^{x}_{v_1}$. The case $\alpha=1$ differs from $\alpha \neq 1$ in this respect because $0$ is polar and the conditioning to approach the interval from above is not well-defined because $\xi_{\underline m}=0$ almost surely.

\subsection{Stable processes absorbed without restrictions} \label{sec_abs_both}
In this section we want to analyse the h-transforms $(\xi,\mP^x_{v})$ with $v=v_1+v_{-1}$. The two main aspects are the same as in Section \ref{sec_abs_above}. First we want to analyse the behaviour of the paths of $(\xi,\mP^x_v)$ at the killing time if it is finite. Second we give characterisations of the $h$-transformed process as the original process conditioned on similar events as in Section \ref{sec_abs_above}.\smallskip

In the case $\alpha<1$ this works as one would expect, namely the $h$-transform using $v$ corresponds to the process conditioned on $\{ |\xi_{\underline{m}}| \in (1,1+\eps) \}$ for $\eps$ tending to $0$. For $\alpha \geq 1$ we won't find a representation $(\xi,\mP^x_{v})$ as a conditioned process. Nonetheless we can show that the process conditioned to be absorbed by $[-1,1]$ without any restrictions on the side of the interval of which it is absorbed, equals $(\xi,\mP^x_{v_1})$ or  $(\xi,\mP^x_{v_{-1}})$ depending on some relation on $\rho$. This means that the process conditioned to be absorbed without any restrictions coincides with one of the processes conditioned to be absorbed from one side.\smallskip


Here is the result on the behaviour at the killing time:
\begin{proposition}\label{thm_abs_both}
Let $\xi$ be an $\alpha$-stable process with $\alpha \in (0,2)$ and both sided jumps, then
$$\mP_{v}^x(\zeta < \infty, |\xi_{\zeta-}| =1)=1, \quad x \notin [-1,1].$$
\end{proposition}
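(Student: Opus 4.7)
The idea is to write the $h$-transform $\mP^x_v$ with $v = v_1 + v_{-1}$ as a convex combination of $\mP^x_{v_1}$ and $\mP^x_{v_{-1}}$, and then apply Proposition~\ref{thm_abs_above} (and its mirror image for $v_{-1}$) to each summand. Both $v_1$ and $v_{-1}$ are strictly positive and harmonic by Theorem~\ref{thm_harm}, so $\mP^x_v$, $\mP^x_{v_1}$, $\mP^x_{v_{-1}}$ are bona fide probability measures on the killed path space. Setting $c_1 := v_1(x)/v(x)$ and $c_{-1} := v_{-1}(x)/v(x) = 1 - c_1$, linearity of expectation in the defining formula~\eqref{def_htrafo} gives, for any $t \geq 0$ and $\Lambda \in \cF_t$,
\begin{align*}
\mP^x_v(\Lambda,t<\zeta) = c_1\,\mP^x_{v_1}(\Lambda,t<\zeta) + c_{-1}\,\mP^x_{v_{-1}}(\Lambda,t<\zeta).
\end{align*}
Both sides define probability measures on the natural $\sigma$-algebra of the killed path space that agree on the $\pi$-system $\lc \Lambda \cap \{t<\zeta\} : t \geq 0,\, \Lambda \in \cF_t \rc$. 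This $\pi$-system generates the full $\sigma$-algebra (the lifetime $\zeta$ and every pre-killing observation can be recovered from it), so the $\pi$-$\lambda$ theorem yields $\mP^x_v(A) = c_1\mP^x_{v_1}(A) + c_{-1}\mP^x_{v_{-1}}(A)$ for every such event $A$; in particular for $A := \{\zeta < \infty,\, |\xi_{\zeta-}|=1\}$.

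By Proposition~\ref{thm_abs_above} one has $\mP^x_{v_1}(\zeta < \infty,\, \xi_{\zeta-} = 1) = 1$, and the symmetric statement $\mP^x_{v_{-1}}(\zeta < \infty,\, \xi_{\zeta-} = -1) = 1$ follows by interchanging $\rho$ and $\hat\rho$ (which swaps $v_1 \leftrightarrow v_{-1}$), as indicated at the beginning of Section~\ref{sec_abs_above}. Both of these subevents are contained in $A$, so $\mP^x_{v_1}(A) = \mP^x_{v_{-1}}(A) = 1$, and therefore $\mP^x_v(A) = c_1 + c_{-1} = 1$, which is the claim.

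The only genuinely delicate point is the extension of the linearity identity from $\cF_t$-events inside $\{t < \zeta\}$ to events depending on the random killing time and the left limit $\xi_{\zeta-}$. Although this is measure-theoretic routine, one can alternatively bypass it by approximating $A$ from below by events of the form $A_\eps := \{\tau_\eps < \zeta\}$ with $\tau_\eps := T_{(-1-\eps,-1)\,\cup\,(1,1+\eps)}$, applying the stopping-time version~\eqref{eq_htrafo_stoppingtime} at $\tau_\eps$ to split $v(\xi_{\tau_\eps}) = v_1(\xi_{\tau_\eps}) + v_{-1}(\xi_{\tau_\eps})$, and invoking the strong Markov property to separate the post-$\tau_\eps$ dynamics before sending $\eps \downarrow 0$.
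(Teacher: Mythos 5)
Your argument is correct, but it is genuinely different from the paper's. The paper does not decompose $\mP^x_v$ at all: it proves the exact analogue of Lemma \ref{lemma_help_v1} for $v$ (Lemma \ref{lemma_help_v}), and then literally reruns the two-step argument of Proposition \ref{thm_abs_above} — first $\zeta<\infty$ a.s.\ from the finiteness of $U_v(x,[-b,-1)\cup(1,b])$ together with the escape estimate, then the identification of the left limit from harmonicity of $v$, its poles at both $\pm 1$, and its boundedness away from $\{\pm1\}$. Your route instead exploits the linearity of the Doob transform in the harmonic function, writing
\begin{align*}
\mP^x_v(\cdot) \;=\; \frac{v_1(x)}{v(x)}\,\mP^x_{v_1}(\cdot) \;+\; \frac{v_{-1}(x)}{v(x)}\,\mP^x_{v_{-1}}(\cdot),
\end{align*}
and reduces everything to Proposition \ref{thm_abs_above} and its $\rho\leftrightarrow\hat\rho$ mirror image, which the paper explicitly licenses at the start of Section \ref{sec_abs_above}. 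This is shorter, avoids redoing the potential-theoretic estimates, and yields strictly more as a byproduct, namely the splitting probabilities $\mP^x_v(\xi_{\zeta-}=1)=v_1(x)/v(x)$ and $\mP^x_v(\xi_{\zeta-}=-1)=v_{-1}(x)/v(x)$, which the paper's route does not produce. The one point you rightly flag — extending the identity from events $\Lambda\cap\{t<\zeta\}$ to the event $\{\zeta<\infty,\,|\xi_{\zeta-}|=1\}$ — is indeed routine ($\{0<\zeta\}$ is the whole space for $x\notin[-1,1]$, so the $\pi$-$\lambda$ argument applies, and the killing time and left limit are measurable with respect to the generated $\sigma$-algebra on the càdlàg path space with cemetery); note that the paper's own proof of Proposition \ref{thm_abs_above} faces the same issue when it evaluates $\mP^x_{v_1}(\xi_{\zeta-}=1)$ and resolves it by approximating through the stopping-time events $\{T_{(1,1+\eps)}<\zeta\}$, exactly in the spirit of your proposed bypass. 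Both approaches are sound; yours trades a small measure-theoretic extension for a substantial saving in analysis.
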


As before we want to connect the $h$-transformed process to some conditioned process. Again we have to separate the cases $\alpha<1$ and $\alpha \geq 1$ and for $\alpha>1$ we give an alternative conditioned process. The event we condition on is bigger than in Section \ref{sec_abs_above} in all cases.

We start with the asymptotic in the case $\alpha<1$ and the characterisation of $(\xi,\mP^x_v)$ as conditioned process as one would expect with the knowledge of Theorem \ref{thm_cond_v1_<1}.
\begin{theorem}\label{thm_cond_v_<1}
Let $\xi$ be an $\alpha$-stable process with $\alpha \in (0,1)$ and both sided jumps. Then it holds, for all $x \notin [-1,1]$ and $\Lambda \in \cF_t$, that
$$\mP^{x}_{v}(\Lambda, t < \zeta) =  \lim_{\delta \searrow 0} \lim_{\eps \searrow 0} \mP^x(\Lambda, t< T_{(-(1+\delta),1+\delta)} \,|\, |\xi_{\underline{m}}| \in (1,1+\eps)).$$
\end{theorem}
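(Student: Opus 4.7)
The plan is to reduce the claim to Theorem~\ref{thm_cond_v1_<1} (together with its symmetric analogue for $v_{-1}$) by decomposing according to the side of the interval at which the point of closest reach lies. For fixed $\delta>0$, the event $\{|\xi_{\underline{m}}|\in(1,1+\eps)\}$ is the disjoint union of $\{\xi_{\underline{m}}\in(1,1+\eps)\}$ and $\{\xi_{\underline{m}}\in(-(1+\eps),-1)\}$, so Bayes' rule gives
$$
\mP^x\bigl(\Lambda,\, t<T_{(-(1+\delta),1+\delta)} \bigm| |\xi_{\underline{m}}|\in(1,1+\eps)\bigr) \;=\; \lambda_\eps\, p^+_\eps \;+\; (1-\lambda_\eps)\, p^-_\eps,
$$
where $\lambda_\eps := \mP^x(\xi_{\underline{m}}\in(1,1+\eps))/\mP^x(|\xi_{\underline{m}}|\in(1,1+\eps))$ and $p^\pm_\eps$ denote the two one-sided conditional probabilities of $\{\Lambda,\, t<T_{(-(1+\delta),1+\delta)}\}$.

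By the slightly more general form~\eqref{aa} of Theorem~\ref{thm_cond_v1_<1} and its symmetric analogue for $v_{-1}$, we already know $p^+_\eps \to \mP^x_{v_1}(\Lambda,\, t<T_{(-(1+\delta),1+\delta)})$ and $p^-_\eps \to \mP^x_{v_{-1}}(\Lambda,\, t<T_{(-(1+\delta),1+\delta)})$ as $\eps\searrow 0$. For the weight $\lambda_\eps$ I would appeal to the same boundary asymptotic of the point-of-closest-reach density (available from Kyprianou et al.~\cite{Kyp_Par_Wat_01}) that underlies the proof of Theorem~\ref{thm_cond_v1_<1}. That asymptotic yields, with a common positive constant $c$ independent of $x$,
$$
\frac{\mP^x(\xi_{\underline{m}}\in(1,1+\eps))}{\eps} \;\xrightarrow[\eps\searrow 0]{}\; c\,v_1(x), \qquad \frac{\mP^x(\xi_{\underline{m}}\in(-(1+\eps),-1))}{\eps} \;\xrightarrow[\eps\searrow 0]{}\; c\,v_{-1}(x),
$$
so $\lambda_\eps \to v_1(x)/v(x)$ and $1-\lambda_\eps \to v_{-1}(x)/v(x)$. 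Plugging these limits into the convex combination and using linearity of the Doob $h$-transform then yields
$$
\lim_{\eps \searrow 0} \mP^x\bigl(\Lambda,\, t<T_{(-(1+\delta),1+\delta)} \bigm| |\xi_{\underline{m}}|\in(1,1+\eps)\bigr) \;=\; \mE^x\!\left[\1_\Lambda\, \1_{\{t<T_{(-(1+\delta),1+\delta)}\}}\, \frac{v(\xi_t)}{v(x)}\right] \;=\; \mP^x_v\bigl(\Lambda,\, t<T_{(-(1+\delta),1+\delta)}\bigr).
$$

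To pass from this to the stated identity I would let $\delta\searrow 0$. Since $v=v_1+v_{-1}$ is harmonic, the $h$-transformed process leaves every compact subset of $\mR\setminus[-1,1]$ before being killed, and by Proposition~\ref{thm_abs_both} we have $\zeta<\infty$ with $|\xi_{\zeta-}|=1$ almost surely under $\mP^x_v$. Therefore $T_{(-(1+\delta),1+\delta)} \nearrow \zeta$ $\mP^x_v$-a.s.\ as $\delta\searrow 0$, and monotone convergence of the indicators delivers $\mP^x_v(\Lambda, t<T_{(-(1+\delta),1+\delta)}) \to \mP^x_v(\Lambda, t<\zeta)$, which is the claim.

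The main obstacle is ensuring that the two boundary-density asymptotics of $\xi_{\underline{m}}$ at $1^+$ and at $-1^-$ share \emph{one and the same} normalising constant $c$, so that the convex combination of $\mP^x_{v_1}$ and $\mP^x_{v_{-1}}$ reassembles into the $h$-transform by $v=v_1+v_{-1}$ rather than into some differently weighted linear combination. This compatibility is precisely what makes the definitions of $v_1$ and $v_{-1}$ (with matched leading $\sin$-constants under $\rho\leftrightarrow\hat\rho$) the right ones, and it is essentially already contained in the computations supporting Theorem~\ref{thm_cond_v1_<1}; everything else in the argument is routine monotone/dominated convergence and linearity of the Doob transform.
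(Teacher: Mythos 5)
Your argument is correct, but it is organised differently from the paper's. The paper proves Theorem \ref{thm_cond_v_<1} by re-running the proof of Theorem \ref{thm_cond_v1_<1} verbatim with $v$ in place of $v_1$: the Markov property turns the conditional probability into $\mE^x\big[\1_\Lambda\1_{\{t<T_{(-(1+\delta),1+\delta)}\}}\,\mP^{\xi_t}(|\xi_{\underline m}|\in(1,1+\eps))/\mP^x(|\xi_{\underline m}|\in(1,1+\eps))\big]$, the same dominating function justifies exchanging the $\eps$-limit with the expectation, and the two-sided asymptotic \eqref{eq_as_<1} of Proposition \ref{prop_as_<1} identifies the limit of the ratio as $v(\xi_t)/v(x)$. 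You instead treat \eqref{aa} and its $v_{-1}$-analogue as black boxes and reassemble them through a Bayes decomposition of $\{|\xi_{\underline m}|\in(1,1+\eps)\}$ into its two one-sided pieces, using that a convex combination of the two Doob transforms with weights $v_1(x)/v(x)$ and $v_{-1}(x)/v(x)$ is exactly the Doob transform by $v=v_1+v_{-1}$. This is more modular (no second dominated-convergence argument is needed, only convergence of the scalar weights $\lambda_\eps$), at the cost of requiring explicitly that the two one-sided boundary asymptotics carry the \emph{same} normalising constant --- the obstacle you correctly single out. That compatibility is precisely the content of Proposition \ref{prop_as_<1}: both limits $\eps^{-1}\mP^x(\xi_{\underline m}\in(1,1+\eps))$ and $\eps^{-1}\mP^x(\xi_{\underline m}\in(-(1+\eps),-1))$ equal $c\,v_1(x)$ resp.\ $c\,v_{-1}(x)$ with $c=2^{\alpha}\Gamma(1-\alpha)/\big(\pi\Gamma(1-\alpha\rho)\Gamma(1-\alpha\hat\rho)\big)$, which is manifestly invariant under $\rho\leftrightarrow\hat\rho$; note only that the source of these asymptotics is Proposition 1.1 of \cite{Kyp_Riv_Sen_01} (the law of the point of closest reach), not \cite{Kyp_Par_Wat_01}, which the paper uses for $\alpha\geq 1$. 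Your final $\delta\searrow 0$ step via Proposition \ref{thm_abs_both} matches what the paper leaves implicit and is fine.
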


As we already mentioned, in the case $\alpha \geq 1$ the process conditioned to be absorbed by the interval without restriction on the side of absorption is the same as the process conditioned to be absorbed from one side, the side depending on $\rho$.
\begin{theorem}\label{thm_cond_v_geq1}
Let $\xi$ be an $\alpha$-stable process with $\alpha \in [1,2)$ and both sided jumps. Then it holds, for all $x \notin [-1,1]$ and $\Lambda \in \cF_t$, that
\begin{align*}
& \lim_{\delta \searrow 0} \lim_{\eps \searrow 0} \mP^x(\Lambda, t< T_{(-(1+\delta),1+\delta)} \,| \, \xi_{T_{(-(1+\eps),1+\eps)}} \notin [-1,1])\\
=&\, \begin{cases}
\mP^{x}_{v_1}(\Lambda, t < \zeta) \quad &\text{if } \rho \leq \frac{1}{2} \\
\mP^{x}_{v_{-1}}(\Lambda, t < \zeta) \quad &\text{if } \rho > \frac{1}{2}
\end{cases}.
\end{align*}
\end{theorem}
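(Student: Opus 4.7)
\emph{Proof proposal.} The plan is to split the conditioning event into its two possible absorption sides, apply Theorem~\ref{thm_cond_v1_geq1} (and its mirror counterpart obtained by interchanging $\rho\leftrightarrow\hat\rho$) on each side, and then determine which of the two mixing weights survives as $\eps\searrow 0$. Set
\[
A_\eps^+ := \{\xi_{T_{(-(1+\eps),1+\eps)}}\in(1,1+\eps)\}, \qquad A_\eps^- := \{\xi_{T_{(-(1+\eps),1+\eps)}}\in(-(1+\eps),-1)\}.
\]
These are disjoint and their union equals the conditioning event $E_\eps := \{\xi_{T_{(-(1+\eps),1+\eps)}}\notin[-1,1]\}$. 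With $\pi_\eps^\pm(x):=\mP^x(A_\eps^\pm)/\mP^x(E_\eps)$, the conditional probability decomposes as
\begin{align*}
\mP^x(\Lambda,\, t<T_{(-(1+\delta),1+\delta)} \,|\, E_\eps) &= \pi_\eps^+(x)\,\mP^x(\Lambda,\, t<T_{(-(1+\delta),1+\delta)} \,|\, A_\eps^+) \\
&\quad + \pi_\eps^-(x)\,\mP^x(\Lambda,\, t<T_{(-(1+\delta),1+\delta)} \,|\, A_\eps^-).
\end{align*}

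For each fixed $\delta>0$, the general (pre-$\delta$-limit) version of Theorem~\ref{thm_cond_v1_geq1} identifies the inner $\eps$-limits of the two conditional probabilities as $\mP^x_{v_1}(\Lambda,\, t<T_{(-(1+\delta),1+\delta)})$ and $\mP^x_{v_{-1}}(\Lambda,\, t<T_{(-(1+\delta),1+\delta)})$, respectively. The task then reduces to finding the $\eps\searrow 0$ limit of the weight $\pi_\eps^+(x)$. Using the explicit first-entry distribution of the enlarged interval -- available through the interval hitting formulas of Kyprianou--Pardo--Watson~\cite{Kyp_Par_Wat_01}, or equivalently by extracting the rates already embedded in the proof of Theorem~\ref{thm_cond_v1_geq1} -- one verifies that the density has boundary behavior $(1+\eps-y)^{-\alpha\hat\rho}$ near the upper edge $1+\eps$ and $(y+1+\eps)^{-\alpha\rho}$ near the lower edge. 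Integration over the width-$\eps$ strips yields
\[
\mP^x(A_\eps^+) \sim C_+(x)\,\eps^{1-\alpha\hat\rho}, \qquad \mP^x(A_\eps^-) \sim C_-(x)\,\eps^{1-\alpha\rho},
\]
with strictly positive prefactors $C_\pm(x)$ (which in fact may be identified, up to universal constants, with $v_{\pm1}(x)$). Hence
\[
\frac{\pi_\eps^-(x)}{\pi_\eps^+(x)} \sim \frac{C_-(x)}{C_+(x)}\,\eps^{\alpha(2\rho-1)},
\]
sending $\pi_\eps^+(x)\to 1$ if $\rho<1/2$ and $\pi_\eps^+(x)\to 0$ if $\rho>1/2$.

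Substituting the weight limits back gives, for each fixed $\delta>0$, the inner limit as $\mP^x_{v_1}(\Lambda,\, t<T_{(-(1+\delta),1+\delta)})$ when $\rho<1/2$ and as $\mP^x_{v_{-1}}(\Lambda,\, t<T_{(-(1+\delta),1+\delta)})$ when $\rho>1/2$. Finally, sending $\delta\searrow 0$, monotone convergence applies because $T_{(-(1+\delta),1+\delta)}\nearrow\zeta$ under each $h$-transform -- a direct consequence of Proposition~\ref{thm_abs_above}, which forces continuous absorption at $\pm 1$. The main obstacle is the weight asymptotic: pinning down the precise powers $1-\alpha\hat\rho$ and $1-\alpha\rho$ and verifying that the prefactors $C_\pm(x)$ are positive and finite for every $x\notin[-1,1]$. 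These requirements are secured by reusing the hitting-distribution computations that underlie the proof of Theorem~\ref{thm_cond_v1_geq1}. The equality case $\rho=1/2$, where the two $\eps$-powers coincide and the mixing weight converges to a nontrivial $x$-dependent limit rather than to $\{0,1\}$, needs additional care, typically exploiting the reflection symmetry $v_1(\cdot) \leftrightarrow v_{-1}(-\cdot)$ valid in this symmetric regime.
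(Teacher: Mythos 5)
Your decomposition of the conditioning event into $A_\eps^+$ and $A_\eps^-$ and the comparison of the two decay rates $\eps^{1-\alpha\hat\rho}$ and $\eps^{1-\alpha\rho}$ is exactly the mechanism the paper uses; the paper just packages it differently. Instead of writing the conditional law as the convex combination $\pi_\eps^+\mP(\cdot\,|\,A_\eps^+)+\pi_\eps^-\mP(\cdot\,|\,A_\eps^-)$, the paper applies the Markov property once to reduce everything to the ratio $\mP^{\xi_t}(E_\eps)/\mP^x(E_\eps)$, multiplies numerator and denominator by $\eps^{\alpha\hat\rho-1}$ (for $\rho\le\tfrac12$), and observes that the lower-strip contributions vanish in the limit, leaving $v_1(\xi_t)/v_1(x)$; the dominating function is then recycled from the proof of Theorem \ref{thm_cond_v1_geq1}. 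Your weight asymptotics are the content of Proposition \ref{prop_as_geq1} and its mirror, and your identification $C_\pm(x)\propto v_{\pm1}(x)$ is correct. One small slip: the ratio $\pi_\eps^-/\pi_\eps^+$ behaves like $\eps^{\alpha(\hat\rho-\rho)}=\eps^{\alpha(1-2\rho)}$, not $\eps^{\alpha(2\rho-1)}$; with your exponent as written the conclusion would be reversed, though you state the correct conclusion, so this is evidently a typo.

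The one genuine gap is the boundary case $\rho=\tfrac12$, which the theorem's first alternative includes but which you explicitly defer. There the two powers coincide, the weights converge to $\pi^\pm(x)=v_{\pm1}(x)/v(x)$, and the resulting mixture is
\[
\frac{v_1(x)}{v(x)}\,\mP^x_{v_1}(\Lambda,t<T_{(-(1+\delta),1+\delta)})+\frac{v_{-1}(x)}{v(x)}\,\mP^x_{v_{-1}}(\Lambda,t<T_{(-(1+\delta),1+\delta)})=\mP^x_{v}(\Lambda,t<T_{(-(1+\delta),1+\delta)}),
\]
which is \emph{not} $\mP^x_{v_1}$ (the ratio $v_1/v$ is non-constant, since $v_1$ has a pole at $1$ while $v_{-1}$ does not). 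So no amount of ``additional care'' via reflection symmetry will recover the stated formula at $\rho=\tfrac12$; the honest output of your computation there is $\mP^x_v$. Be aware that the paper's own argument has the same blind spot: it writes $\eps^{\alpha\hat\rho-1}\mP^x(A_\eps^-)=\eps^{\alpha(\hat\rho-\rho)}\cdot\eps^{\alpha\rho-1}\mP^x(A_\eps^-)\to 0$, which requires $\rho<\hat\rho$ strictly. For $\rho\neq\tfrac12$ your argument is sound and matches the paper's; for $\rho=\tfrac12$ you should either exclude the case or state the limit as $\mP^x_v$.
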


We conclude with the alternative characterisation for the $h$-transform for $\alpha>1$. Again the conditioning refers to the stable process conditioned to avoid $0$ and the event we condition on is the same as in the case $\alpha<1$.
\begin{theorem}\label{thm_cond_v_altern_>1}
Let $\xi$ be an $\alpha$-stable process with $\alpha \in (1,2)$ and both sided jumps. Then it holds, for all $x \notin [-1,1]$ and $\Lambda \in \cF_t$, that
$$\mP^{x}_{v}(\Lambda, t < \zeta) =  \lim_{\delta \searrow 0}\lim_{\eps \searrow 0} \mP_\circ^x(\Lambda, t< T_{(-(1+\delta),1+\delta)} \,|\,|\xi_{\underline{m}}| \in (1,1+\eps)).$$
\end{theorem}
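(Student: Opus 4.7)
I would reduce Theorem \ref{thm_cond_v_altern_>1} to Theorem \ref{thm_cond_v1_altern_>1} and its mirror version for $v_{-1}$, exploiting linearity of the $h$-transform in its positive harmonic function. Writing $S_\delta := T_{(-(1+\delta),1+\delta)}$ and $v=v_1+v_{-1}$, one has $S_\delta\le T_{[-1,1]}$, so the defining formula (\ref{def_htrafo}) of the $h$-transform gives immediately
\begin{equation*}
\mP^x_v(\Lambda, t<S_\delta) = \frac{v_1(x)}{v(x)}\,\mP^x_{v_1}(\Lambda, t<S_\delta) + \frac{v_{-1}(x)}{v(x)}\,\mP^x_{v_{-1}}(\Lambda, t<S_\delta).
\end{equation*}
The plan is to reproduce the same convex combination on the conditioning side and then pass $\delta\searrow 0$.

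\textbf{Decomposition and inner limit.}
Partition the conditioning event as $\{|\xi_{\underline m}|\in(1,1+\eps)\} = A_\eps^+\cup A_\eps^-$ with $A_\eps^+:=\{\xi_{\underline m}\in(1,1+\eps)\}$ and $A_\eps^-:=\{\xi_{\underline m}\in(-(1+\eps),-1)\}$. Then
\begin{align*}
& \mP^x_\circ\bigl(\Lambda,\,t<S_\delta\,\big|\,|\xi_{\underline m}|\in(1,1+\eps)\bigr) \\
& \qquad = \lambda_\eps(x)\,\mP^x_\circ(\Lambda, t<S_\delta\mid A_\eps^+) + (1-\lambda_\eps(x))\,\mP^x_\circ(\Lambda, t<S_\delta\mid A_\eps^-),
\end{align*}
where $\lambda_\eps(x):=\mP^x_\circ(A_\eps^+)/(\mP^x_\circ(A_\eps^+)+\mP^x_\circ(A_\eps^-))$. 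Applying the stronger form of Theorem \ref{thm_cond_v1_altern_>1} analogous to (\ref{aa}), and its counterpart for $v_{-1}$ obtained by exchanging $\rho$ with $\hat\rho$ and the two sides of the interval, the two inner conditional probabilities converge as $\eps\searrow 0$ to $\mP^x_{v_1}(\Lambda, t<S_\delta)$ and $\mP^x_{v_{-1}}(\Lambda, t<S_\delta)$, respectively.

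\textbf{Weight identification, outer limit and main obstacle.}
The key remaining step is to prove
\begin{equation*}
\lim_{\eps\searrow 0}\lambda_\eps(x) = \frac{v_1(x)}{v(x)}, \qquad \text{equivalently} \qquad \lim_{\eps\searrow 0}\frac{\mP^x_\circ(A_\eps^+)}{\mP^x_\circ(A_\eps^-)} = \frac{v_1(x)}{v_{-1}(x)}.
\end{equation*}
This identification should fall out of the same ingredient that powers Theorem \ref{thm_cond_v1_altern_>1}: the leading $\eps$-order of $\mP^x_\circ(A_\eps^+)$ is determined by the density of $\xi_{\underline m}$ under $\mP^x_\circ$ evaluated near $1$ (available from the deep factorisation of stable processes), and its leading coefficient is, up to a prefactor that cancels in the ratio, $v_1(x)$; the mirror computation at $-1$ produces $v_{-1}(x)$. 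Combining everything yields $\lim_{\eps\searrow 0}\mP^x_\circ(\Lambda, t<S_\delta\mid|\xi_{\underline m}|\in(1,1+\eps)) = \mP^x_v(\Lambda, t<S_\delta)$ for each $\delta>0$. Finally, Proposition \ref{thm_abs_both} forces $S_\delta\nearrow\zeta$ as $\delta\searrow 0$ $\mP^x_v$-a.s.\ on $\{\zeta<\infty,\,|\xi_{\zeta-}|=1\}$, which has full $\mP^x_v$-measure, and monotone convergence closes the argument. The main obstacle is the ratio computation: although Theorem \ref{thm_cond_v1_altern_>1} already pins the density of $\xi_{\underline m}$ under $\mP^x_\circ$ at $+1$ to be proportional to $v_1(x)$, one still has to verify carefully that the normalising prefactors appearing on both sides coincide so that they cancel in $\lambda_\eps(x)$; this appears to be bookkeeping rather than a new analytical input, and the benefit of this route is that Theorem \ref{thm_cond_v1_altern_>1} is re-used as a black box.
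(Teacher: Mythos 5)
Your argument is correct, and it takes a different route from the paper's. The paper proves Theorem \ref{thm_cond_v_altern_>1} by re-running, for the two-sided event, the same Markov-property-plus-dominated-convergence computation used for Theorem \ref{thm_cond_v1_altern_>1} (it literally says the theorem ``can be proven similarly using the same dominating function''), with the asymptotic \eqref{eq_as_>1} of Proposition \ref{prop_as_altern_>1} replacing \eqref{eq_as1_>1}. You instead reduce to the already-proven one-sided statements by splitting the conditioning event into $A_\eps^{+}$ and $A_\eps^{-}$ and matching the convex weights with the decomposition $\mP^x_v(\cdot,t<S_\delta)=\frac{v_1(x)}{v(x)}\mP^x_{v_1}(\cdot,t<S_\delta)+\frac{v_{-1}(x)}{v(x)}\mP^x_{v_{-1}}(\cdot,t<S_\delta)$, which is valid since $S_\delta\le T_{[-1,1]}$ and $v=v_1+v_{-1}$. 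The ``main obstacle'' you flag is in fact already resolved in the paper: the weight limit $\lambda_\eps(x)\to v_1(x)/v(x)$ is exactly \eqref{eq_as1_>1} together with the displayed $v_{-1}$-analogue inside the proof of Proposition \ref{prop_as_altern_>1}; both carry the identical normalisation $\frac{e(x)}{\eps}$ and constant $\frac{\alpha-1}{2}$, so the prefactors cancel in the ratio with no further input. What your route buys is economy (Theorem \ref{thm_cond_v1_altern_>1} and its $\rho\leftrightarrow\hat\rho$ mirror are used as black boxes, no second dominated-convergence argument) and conceptual clarity: it makes visible that here both sides contribute at the same order $\eps$, yielding a genuine mixture, in contrast to Theorem \ref{thm_cond_v_geq1} where the orders $\eps^{1-\alpha\hat\rho}$ and $\eps^{1-\alpha\rho}$ differ and one side swallows the other. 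What the paper's direct route buys is uniformity of presentation across all the conditioning theorems. Your handling of the outer limit $\delta\searrow 0$ via Proposition \ref{thm_abs_both} and monotonicity of $S_\delta$ is actually more explicit than the paper, which leaves that step implicit; to make it airtight you should note that $\sup_{\delta>0}S_\delta=\zeta$ $\mP^x_v$-a.s.\ follows from $|\xi_{\zeta-}|=1$ together with quasi-left-continuity of the $h$-process (the limit of $\xi_{S_\delta}$ along $S_\delta\nearrow\sup_\delta S_\delta$ would otherwise be a point of $[-1,1]$ attained strictly before $\zeta$).
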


\section{Proofs}\label{sec_proofs}

\subsection{Harmonic functions} \label{sec_proof_harmonic}
In this section we prove Theorem \ref{thm_harm}. First we give an idea how to extract the right harmonic functions. The potential measure of $\xi$ killed when it enters $[-1,1]$ is defined as
$$U_{[-1,1]}(x,\dd y) := \mE^x \Bigg[ \int\limits_0^{T_{[-1,1]}} \1_{\{ \xi_t \in \dd y \}} \, \dd t \Bigg], \quad x,y \notin [-1,1].$$
It is known that the potential measure has a density with respect to the Lebesgue measure (also known as Green's function), i.e.
$$U_{[-1,1]}(x,\dd y) = u_{[-1,1]}(x,y) \, \dd y,$$
where $u_{[-1,1]}: (\mR\setminus [-1,1])^2 \rightarrow [0,\infty)$ is explicitely known from Profeta and Simon \cite{Pro_Sim_01}. Moreover, Kunita and Watanabe \cite{Kun_Wat_01} showed that $x \mapsto u_{[-1,1]}(x,y)$ is harmonic for all $y \notin [-1,1]$ and, heuristically speaking, the corresponding $h$-transform should be the process conditioned to be absorbed by $y$. Since our aim is to condition the process to be absorbed from $1$ we will consider the limit when $y$ tends to $1$. But from the formulas of \cite{Pro_Sim_01} we see immediately that $u_{[-1,1]}(x,y)$ converges to $0$ for $y$ tending to $1$. So there are two difficulties. The first one is that we need to renormalise $u_{[-1,1]}(x,y)$ such that it converges pointwise for $y \searrow 1$ to some function in $x$ and second we need to argue why in this case the limit of the (scaled) harmonic function is harmonic again.\smallskip

To abbreviate we denote
$$c_{\alpha\rho} :=  2^{\alpha\rho}\frac{\pi \alpha\rho\Gamma(\alpha\rho)}{\Gamma(1-\alpha\hat{\rho})} \quad \text{and} \quad c_{\alpha\hat\rho}:= 2^{\alpha\hat{\rho}}\frac{\pi \alpha\hat{\rho}\Gamma(\alpha\hat{\rho})}{\Gamma(1-\alpha\rho)}.$$
The first auxiliary result establishes a pointwise connection between $v_1$ and the potential density $u_{[-1,1]}$ which will be very important for the proof of harmonicity of $v_1$. From Profeta and Simon \cite{Pro_Sim_01} we know that $y \mapsto u_{[-1,1]}(x,y)$ has a pole in $x$ (for $\alpha<1$) but is also integrable at $x$. Hence, defining $u_{[-1,1]}(x,x) := 0$ does not change anything for the potential of the process killed on entering $[-1,1]$.
\begin{lemma} \label{lemma_boring}
Whenever $x>y>1$ or $x<-1, y>1$, it holds that
\begin{align*}
v_1(x) &= 2^{\alpha\hat\rho-1} c_{\alpha\rho} \frac{u_{[-1,1]}(x,y)}{g(y)}\\
&\quad -\Big(\sin(\pi\alpha\hat\rho) \1_{\lc x>1 \rc} + \sin(\pi\alpha\rho) \1_{\lc x<-1 \rc}\Big)\\
& \quad  \quad \times \frac{(1-\alpha\hat\rho)|x-y|^{\alpha-1}}{g(y)} \int\limits_1^{z(x,y)} (u-1)^{\alpha\rho}(u+1)^{\alpha\hat\rho-2}\, \dd u
\\
&\quad +(\alpha-1)_+ \Big(\sin(\pi\alpha\hat\rho)\1_{\{ x>1 \}}\int\limits_1^x \psi_{\alpha\rho}(u)\, \dd u  + \sin(\pi\alpha\rho) \1_{\{ x<-1 \}}\int\limits_1^{|x|} \psi_{\alpha\hat\rho}(u)\, \dd u\Big)\\
&\quad  \quad \times \Big(\frac{\alpha\rho}{g(y)} \int\limits_1^y \psi_{\alpha\hat\rho}(u)\,\dd u -1 \Big),
\end{align*}
where $g(y)=(y-1)^{\alpha\rho}(y+1)^{\alpha\hat\rho-1} = (y-1) \psi_{\alpha\hat\rho}(y)$.
\end{lemma}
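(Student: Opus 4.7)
The lemma is a pure algebraic identity between $v_1(x)$ and the explicit potential density $u_{[-1,1]}(x,y)$ of Profeta and Simon, divided by the normaliser $g(y)$, with two correction terms. My plan is therefore purely computational: start from the closed form of $u_{[-1,1]}(x,y)$ given in \cite{Pro_Sim_01}, insert it on the right-hand side, and rearrange until the definition of $v_1(x)$ from Section \ref{sec_harm} appears.

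More concretely, I would first recall the explicit Profeta--Simon expression for $u_{[-1,1]}(x,y)$ in the two relevant geometric configurations: $x>y>1$ (same side of $[-1,1]$ as $y$) and $x<-1<1<y$ (opposite sides). In both cases $u_{[-1,1]}(x,y)$ is a constant multiple of $|x-y|^{\alpha-1}$ times a Beta-type integral of the auxiliary function $\psi_{\alpha\rho}$ or $\psi_{\alpha\hat\rho}$ evaluated between $1$ and the boundary value $z(x,y)$ used in \cite{Pro_Sim_01}. Dividing by $g(y)=(y-1)\psi_{\alpha\hat\rho}(y)$ and multiplying by $2^{\alpha\hat\rho-1}c_{\alpha\rho}$, the hope is that the leading Beta integral splits, via the integration-by-parts identity
\begin{equation*}
(1-\alpha\hat\rho)\int_1^{z}(u-1)^{\alpha\rho}(u+1)^{\alpha\hat\rho-2}\,\dd u = (z-1)\psi_{\alpha\rho}(z)\,(z+1)^{-1}\cdot(\text{boundary terms}) + \alpha\rho\int_1^{z}\psi_{\alpha\rho}(u)\,\dd u,
\end{equation*}
into exactly the two pieces appearing in $v_1$: the $(x\pm 1)\psi_{\alpha\rho}(x)$ boundary contribution and the $(\alpha-1)_+\int_1^x\psi_{\alpha\rho}(u)\,\dd u$ integral remainder. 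The constant $c_{\alpha\rho}$ is designed precisely so that the $\sin(\pi\alpha\hat\rho)$ (resp.\ $\sin(\pi\alpha\rho)$) prefactor of $v_1$ emerges via the reflection formula $\Gamma(\alpha\rho)\Gamma(1-\alpha\rho)=\pi/\sin(\pi\alpha\rho)$ after the $2^{\alpha\hat\rho-1}$-scaling cancels the $2^{\alpha\rho}$ hidden in $c_{\alpha\rho}$.

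I would then handle the two cases in parallel. For $x>y>1$ the variable $z(x,y)$ collapses to a bounded expression in $(x,y)$ and the identity essentially reduces to matching the $\psi_{\alpha\rho}$-integral against its boundary values. For $x<-1$, $y>1$ the extra $|x-y|^{\alpha-1}$ factor survives on the right-hand side as the middle correction term; the remaining piece must conspire, after multiplying by $\sin(\pi\alpha\rho)/g(y)$ and using the $\psi_{\alpha\hat\rho}$ versions of the integration-by-parts identity, to produce the $\sin(\pi\alpha\rho)[(|x|-1)\psi_{\alpha\hat\rho}(|x|)-(\alpha-1)_+\int_1^{|x|}\psi_{\alpha\hat\rho}(u)\,\dd u]$ piece of $v_1(x)$.

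The main nuisance will be bookkeeping rather than mathematics. The factor $(\alpha-1)_+$ forces a separate treatment of $\alpha<1$ and $\alpha\ge 1$: for $\alpha<1$ one needs the $\int_1^y\psi_{\alpha\hat\rho}(u)\,\dd u$ piece to contribute only through the bracket $\big(\alpha\rho g(y)^{-1}\int_1^y\psi_{\alpha\hat\rho}-1\big)\cdot 0$, which is why the whole third line vanishes in that regime; for $\alpha\ge 1$ the same bracket provides exactly the mismatch between the $\int_1^x\psi_{\alpha\rho}$ coming out of the potential density and the one prescribed by $v_1$. Verifying that these terms line up — in particular that the integration-by-parts boundary values at $u=1$ do not leave spurious residues and that the $(1-\alpha\hat\rho)$ prefactor on the middle term matches the Profeta--Simon normalisation — is the only step that requires real care; everything else is a straight-line rearrangement.
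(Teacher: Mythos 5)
Your plan is exactly the paper's proof: insert the explicit Profeta--Simon formula for $u_{[-1,1]}(x,y)$, integrate the Beta-type integral $\int_1^{z(x,y)}\psi_{\alpha\hat\rho}(u)\,\dd u$ by parts to produce the boundary term $(z-1)^{\alpha\rho}(z+1)^{\alpha\hat\rho-1}$ plus the $(u-1)^{\alpha\rho}(u+1)^{\alpha\hat\rho-2}$ remainder, use the product formulas for $z(x,y)\mp 1$ together with $\alpha\rho+\alpha\hat\rho-1=\alpha-1$ to factor $|x-y|^{\alpha-1}$ times the boundary term into $(x\pm 1)\psi(x)\,g(y)$, and solve for $v_1(x)$ with the reflection formula fixing the constants. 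One small correction to your description of the outcome: the middle correction term with the $(1-\alpha\hat\rho)|x-y|^{\alpha-1}$ prefactor survives in \emph{both} configurations (note the indicator $\1_{\{x>1\}}$ in the statement), not only for $x<-1$, and the $(\alpha-1)_+\int_1^x\psi_{\alpha\rho}$ piece of $v_1$ comes from the second summand of the Profeta--Simon formula rather than from the integration by parts.
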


\begin{proof}
We use the explicit expression for $u_{[-1,1]}(x,y)$ from Profeta and Simon \cite{Pro_Sim_01}, where the expression
$$z(x,y) _= \frac{|xy-1|}{|x-y|}, \quad x,y \notin [-1,1],x \neq y,$$
appears frequently. Before we start we note that
$$z(x,y)-1 = \begin{cases}
\frac{(x+1)(y-1)}{x-y} \quad &\text{if } x>y>1 \\
\frac{(|x|-1)(y-1)}{y-x} \quad &\text{if } x<-1,y>1
\end{cases}$$
and 
$$z(x,y)+1 = \begin{cases}
\frac{(x-1)(y+1)}{x-y} \quad &\text{if } x>y>1 \\
\frac{(|x|+1)(y+1)}{y-x} \quad &\text{if } x<-1,y>1
\end{cases}.$$
Furthermore, with integration by parts we get
\begin{align*}
&\quad\int\limits_1^{z(x,y)} \psi_{\alpha\rho}(u) \, \dd u\\
&= \int\limits_1^{z(x,y)} (u-1)^{\alpha\hat{\rho}-1} (u+1)^{\alpha\rho-1} \, \dd u \\
&= \frac{1}{\alpha\hat{\rho}}\Big[(u-1)^{\alpha\hat{\rho}} (u+1)^{\alpha\rho-1}\Big]_1^{z(x,y)} - \frac{\alpha\rho-1}{\alpha\hat{\rho}}\int\limits_1^{z(x,y)} (u-1)^{\alpha\hat{\rho}} (u+1)^{\alpha\rho-2} \, \dd u \\
&= \frac{1}{\alpha\hat{\rho}}\Big((z(x,y)-1)^{\alpha\hat{\rho}} (z(x,y)+1)^{\alpha\rho-1}\Big) + \frac{1-\alpha\rho}{\alpha\hat{\rho}}\int\limits_1^{z(x,y)} (u-1)^{\alpha\hat{\rho}} (u+1)^{\alpha\rho-2} \, \dd u
\end{align*}
and analogously
\begin{align*}
&\quad\int\limits_1^{z(x,y)} \psi_{\alpha\hat\rho}(u) \, \dd u\\
&= \frac{1}{\alpha\rho}\Big((z(x,y)-1)^{\alpha\rho} (z(x,y)+1)^{\alpha\hat\rho-1}\Big) + \frac{1-\alpha\hat\rho}{\alpha\rho}\int\limits_1^{z(x,y)} (u-1)^{\alpha\rho} (u+1)^{\alpha\hat\rho-2} \, \dd u.
\end{align*}
We use the explicit form for $u_{[-1,1]}(x,y)$ given in \cite{Pro_Sim_01} and plug in to see, for $x>y>1$,
\begin{align*}
&\quad\frac{\Gamma(\alpha\rho)\Gamma(\alpha\hat{\rho})}{2^{1-\alpha}} u_{[-1,1]}(x,y)\\
&= (x-y)^{\alpha-1} \int\limits_1^{z(x,y)} \psi_{\alpha\hat{\rho}}(u) \, \dd u - (\alpha-1)_+  \int\limits_1^y \psi_{\alpha\hat\rho}(u) \, \dd u \int\limits_1^x \psi_{\alpha\rho}(u) \, \dd u \\
&= \frac{(x-y)^{\alpha-1}}{\alpha\rho}(z(x,y)-1)^{\alpha\rho} (z(x,y)+1)^{\alpha\hat\rho-1}\\
&\quad + \frac{(1-\alpha\hat\rho)(x-y)^{\alpha-1}}{\alpha\rho}\int\limits_1^{z(x,y)} (u-1)^{\alpha\rho} (u+1)^{\alpha\hat\rho-2} \, \dd u \\
&\quad - (\alpha-1)_+  \int\limits_1^y \psi_{\alpha\hat\rho}(u) \, \dd u \int\limits_1^x \psi_{\alpha\rho}(u) \, \dd u \\
&= \frac{1}{\alpha\rho}((x+1)(y-1))^{\alpha\rho} ((x-1)(y+1))^{\alpha\hat\rho-1}\\
&\quad + \frac{(1-\alpha\hat\rho)(x-y)^{\alpha-1}}{\alpha\rho}\int\limits_1^{z(x,y)} (u-1)^{\alpha\rho} (u+1)^{\alpha\hat\rho-2} \, \dd u \\
&\quad - (\alpha-1)_+  \int\limits_1^y \psi_{\alpha\hat\rho}(u) \, \dd u \int\limits_1^x \psi_{\alpha\rho}(u) \, \dd u \\
&= \frac{1}{\alpha\rho}(y-1)^{\alpha\rho} (y+1)^{\alpha\hat\rho-1} \Big(\frac{1}{\sin(\pi\alpha\hat\rho)} v_1(x)+(\alpha-1)_+ \int\limits_1^x \psi_{\alpha\rho}(u)\, \dd u \Big)\\
&\quad + \frac{(1-\alpha\hat\rho)(x-y)^{\alpha-1}}{\alpha\rho}\int\limits_1^{z(x,y)} (u-1)^{\alpha\rho} (u+1)^{\alpha\hat\rho-2} \, \dd u \\
&\quad - (\alpha-1)_+  \int\limits_1^y \psi_{\alpha\hat\rho}(u) \, \dd u \int\limits_1^x \psi_{\alpha\rho}(u) \, \dd u.
\end{align*}
Solving the equation with respect to $v_1$ and using $\sin(\pi\alpha\hat\rho)= \frac{\pi}{\Gamma(\alpha\hat\rho)\Gamma(1-\alpha\hat\rho)}$ yields the claim for $x>y>1$. For $x<-1,y>1$ we get similarly:
\begin{align*}
&\quad\frac{\sin(\pi\alpha\hat\rho)}{\sin(\pi\alpha\rho)}\frac{\Gamma(\alpha\rho)\Gamma(\alpha\hat{\rho})}{2^{1-\alpha}} u_{[-1,1]}(x,y)\\
&= (y-x)^{\alpha-1} \int\limits_1^{z(x,y)} \psi_{\alpha\hat{\rho}}(u) \, \dd u - (\alpha-1)_+  \int\limits_1^y \psi_{\alpha\hat\rho}(u) \, \dd u \int\limits_1^{|x|} \psi_{\alpha\hat\rho}(u) \, \dd u \\
&= \frac{(y-x)^{\alpha-1}}{\alpha\rho}(z(x,y)-1)^{\alpha\rho} (z(x,y)+1)^{\alpha\hat\rho-1}\\
&\quad + \frac{(1-\alpha\hat\rho)(y-x)^{\alpha-1}}{\alpha\rho}\int\limits_1^{z(x,y)} (u-1)^{\alpha\rho} (u+1)^{\alpha\hat\rho-2} \, \dd u \\
&\quad - (\alpha-1)_+  \int\limits_1^y \psi_{\alpha\hat\rho}(u) \, \dd u \int\limits_1^{|x|} \psi_{\alpha\hat\rho}(u) \, \dd u \\
&= \frac{1}{\alpha\rho}((|x|-1)(y-1))^{\alpha\rho} ((|x|+1)(y+1))^{\alpha\hat\rho-1}\\
&\quad + \frac{(1-\alpha\hat\rho)(y-x)^{\alpha-1}}{\alpha\rho}\int\limits_1^{z(x,y)} (u-1)^{\alpha\rho} (u+1)^{\alpha\hat\rho-2} \, \dd u \\
&\quad - (\alpha-1)_+  \int\limits_1^y \psi_{\alpha\hat\rho}(u) \, \dd u \int\limits_1^{|x|} \psi_{\alpha\hat\rho}(u) \, \dd u \\
&= \frac{1}{\alpha\rho}(y-1)^{\alpha\rho} (y+1)^{\alpha\hat\rho-1} \Big(\frac{1}{\sin(\pi\alpha\rho)} v_1(x)+(\alpha-1)_+ \int\limits_1^{|x|} \psi_{\alpha\hat\rho}(u)\, \dd u \Big)\\
&\quad + \frac{(1-\alpha\hat\rho)(x-y)^{\alpha-1}}{\alpha\rho}\int\limits_1^{z(x,y)} (u-1)^{\alpha\rho} (u+1)^{\alpha\hat\rho-2} \, \dd u \\
&\quad - (\alpha-1)_+  \int\limits_1^y \psi_{\alpha\hat\rho}(u) \, \dd u \int\limits_1^{|x|} \psi_{\alpha\hat\rho}(u) \, \dd u.
\end{align*}
Again, solving with respect to $v_1(x)$ leads to the claim.
\end{proof}

\begin{corollary}\label{cor_limit}
It holds that
$$v_1(x) =  c_{\alpha\rho}\lim_{y \searrow 1} \frac{u_{[-1,1]}(x,y)}{(y-1)^{\alpha\rho}},\quad x \in \mR \setminus [-1,1].$$
\end{corollary}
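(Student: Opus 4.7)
The plan is to isolate $u_{[-1,1]}(x,y)/g(y)$ in the identity of Lemma \ref{lemma_boring} and let $y\searrow 1$ with $x$ fixed outside $[-1,1]$. First note that for $x>1$ we eventually have $x>y>1$ and for $x<-1$ the relation $x<-1,\,y>1$ holds throughout, so Lemma \ref{lemma_boring} applies for all sufficiently small $y-1>0$. Moreover $g(y)/(y-1)^{\alpha\rho}=(y+1)^{\alpha\hat\rho-1}\to 2^{\alpha\hat\rho-1}$ as $y\searrow 1$, so the corollary is equivalent to the assertion
\begin{align*}
2^{\alpha\hat\rho-1}c_{\alpha\rho}\,\frac{u_{[-1,1]}(x,y)}{g(y)}\longrightarrow v_1(x)\quad\text{as }y\searrow 1.
\end{align*}
Solving the identity of Lemma \ref{lemma_boring} for this quantity writes the left-hand side as $v_1(x)+R_1(x,y)+R_2(x,y)$, where $R_1$ is (a sign times) the $|x-y|^{\alpha-1}$-summand and $R_2$ is (a sign times) the bracket $\big(\frac{\alpha\rho}{g(y)}\int_1^y\psi_{\alpha\hat\rho}(u)\,\dd u-1\big)$-summand. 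The task thus reduces to checking $R_1(x,y)\to 0$ and $R_2(x,y)\to 0$.

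For $R_2$, which is present only when $\alpha>1$, a Taylor expansion of the factor $(u+1)^{\alpha\hat\rho-1}$ near $u=1$ yields $\int_1^y\psi_{\alpha\hat\rho}(u)\,\dd u\sim\frac{2^{\alpha\hat\rho-1}}{\alpha\rho}(y-1)^{\alpha\rho}$, hence $\frac{\alpha\rho}{g(y)}\int_1^y\psi_{\alpha\hat\rho}(u)\,\dd u\to 1$, while its $y$-independent prefactor depends only on $x$ and is bounded. For $R_1$ I would use the explicit identities $z(x,y)-1=\frac{(x+1)(y-1)}{x-y}$ for $x>1$ and $z(x,y)-1=\frac{(|x|-1)(y-1)}{y-x}$ for $x<-1$ derived at the start of the proof of Lemma \ref{lemma_boring}, which give $z(x,y)-1=O(y-1)$ with $x$ fixed. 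Since $(u+1)^{\alpha\hat\rho-2}$ is bounded near $u=1$, this produces
\begin{align*}
\int_1^{z(x,y)}(u-1)^{\alpha\rho}(u+1)^{\alpha\hat\rho-2}\,\dd u=O\!\left((y-1)^{\alpha\rho+1}\right),
\end{align*}
while $|x-y|^{\alpha-1}/g(y)=O((y-1)^{-\alpha\rho})$, so the product $R_1(x,y)=O(y-1)$ vanishes.

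The only mild obstacle is the bookkeeping around $R_2$: it is not small term-by-term, and one must recognise the internal cancellation $\frac{\alpha\rho}{g(y)}\int_1^y\psi_{\alpha\hat\rho}\to 1$ to kill the summand as a whole. Once this cancellation and the two straightforward power-counting estimates for $R_1$ are in place, dividing by $(y+1)^{\alpha\hat\rho-1}\to 2^{\alpha\hat\rho-1}$ on both sides yields the claimed limit for all $x\in\mR\setminus[-1,1]$.
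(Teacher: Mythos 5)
Your proposal is correct and follows essentially the same route as the paper: both plug in the identity of Lemma \ref{lemma_boring}, observe that $g(y)/(y-1)^{\alpha\rho}\to 2^{\alpha\hat\rho-1}$, and reduce the claim to showing that the two remainder summands vanish as $y\searrow 1$. The only (immaterial) difference is that you justify the two limits by explicit power counting via $z(x,y)-1=O(y-1)$ and a Taylor expansion, whereas the paper invokes l'H\^opital's rule for the same computations.
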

\begin{proof}
We consider the expression from Lemma \ref{lemma_boring} and let $y$ tend to $1$ from above. It is sufficient to show that
\begin{align*}
&-\Big(\sin(\pi\alpha\hat\rho) \1_{\lc x>1 \rc} + \sin(\pi\alpha\rho) \1_{\lc x<-1 \rc}\Big)\\
&\quad   \times \frac{(1-\alpha\hat\rho)|x-y|^{\alpha-1}}{g(y)} \int\limits_1^{z(x,y)} (u-1)^{\alpha\rho}(u+1)^{\alpha\hat\rho-2}\, \dd u
\\
&+(\alpha-1)_+ \Big(\sin(\pi\alpha\hat\rho)\1_{\lc x>1 \rc}\int\limits_1^x \psi_{\alpha\rho}(u)\, \dd u  + \sin(\pi\alpha\rho) \1_{\lc x<-1 \rc}\int\limits_1^{|x|} \psi_{\alpha\hat\rho}(u)\, \dd u\Big)\\
&\quad   \times \Big(\frac{\alpha\rho}{g(y)} \int\limits_1^y \psi_{\alpha\hat\rho}(u)\,\dd u -1 \Big)
\end{align*}
converges to $0$ for $y \searrow 1$. For that it is of course sufficient to show that 
$$\frac{1}{g(y)} \int\limits_1^{z(x,y)} (u-1)^{\alpha\rho}(u+1)^{\alpha\hat\rho-2}\, \dd u \quad
\text{ and } \quad \frac{\alpha\rho}{g(y)} \int\limits_1^y \psi_{\alpha\hat\rho}(u)\,\dd u -1 $$
converge to $0$ for $y \searrow 1$. Both claims can be seen readily with l'Hopital's rule.
\end{proof}

Now we prove harmonicity of $v_1$.
\begin{proof}[Proof of Theorem \ref{thm_harm}]
To show excessiveness we define the measure
$$\eta(\dd x) \coloneqq v_1(x) \, \dd x\quad \text{ on } \mR \setminus [-1,1].$$
We will show that $\eta$ is an excessive measure for the dual process killed on entering the intervall, i.e. $\eta$ is $\sigma$-finite and it holds that
$$\int\limits_{\mR\setminus [-1,1]} \hat{\mP}^x(\xi_t \in A, t < T_{[-1,1]}) \, \eta(\dd x) \leq \eta(A),$$
for all $A \in \cB(\mR \setminus [-1,1])$ and $t \geq 0$. From Theorem XII.71 of Dellacherie and Meyer \cite{Del_Mey_01} it is known that if an excessive measure has a density with respect to the duality measure (which is the Lebesgue measure also for killed Lévy processes, see Bertoin \cite{Bert_01}, Theorem II.5), then this density is an excessive function for the dual process killed on hitting $[-1,1]$. Hence, by showing that $\eta$ is an excessive measure for the dual process killed on hitting $[-1,1]$, it follows that $v_1$ is an excessive function for the original process killed on entering the interval.\smallskip

To show that $\eta$ is excessive for the dual process, first note that $\eta$ is $\sigma$-finite because $v_1$ is continuous on $\mR \setminus [-1,1]$. Next, for the dual process, we note that
$$\hat{U}_{[-1,1]}(y, \dd x) = u_{[-1,1]}(x,y) \, \dd x,\quad x,y \in \mR \setminus [-1,1],$$ 
where $\hat{U}_{[-1,1]}$ is the potential of the dual process killed on entering $[-1,1]$ (see Theorem XII.72 of Dellacherie and Meyer \cite{Del_Mey_01} for a general Markov process). Let $A \in \cB(\mR \setminus [-1,1])$ be compact, use Corollary \ref{cor_limit} in the first equation and Fatou's Lemma in the second one:
\begin{align*}
&\quad \frac{1}{c_{\alpha\rho}} \int\limits_{\mR\setminus [-1,1]} \hat{\mP}^x(\xi_t \in A, t < T_{[-1,1]}) \, \eta(\dd x)\\
&= \int\limits_{\mR\setminus [-1,1]} \hat{\mP}^x(\xi_t \in A, t < T_{[-1,1]})  \lim\limits_{y \searrow 1} \frac{u_{[-1,1]}(x,y)}{(y-1)^{\alpha\rho}} \,\dd x\\
&\leq \liminf\limits_{y \searrow 1} \frac{1}{(y-1)^{\alpha\rho}} \int\limits_{\mR\setminus [-1,1]} \hat{\mP}^x(\xi_t \in A, t < T_{[-1,1]}) u_{[-1,1]}(x,y) \, \dd x \\
&\leq \liminf\limits_{y \searrow 1} \frac{1}{(y-1)^{\alpha\rho}} \int\limits_{\mR\setminus [-1,1]} \hat{\mP}^x(\xi_t \in A, t < T_{[-1,1]}) \, \hat{U}_{[-1,1]}(y,\dd x)\\
&= \liminf_{y \searrow 1}\frac{1}{(y-1)^{\alpha\rho}}\int\limits_0^\infty \Big( \int\limits_{\mR\setminus [-1,1]} \hat{\mP}^x(\xi_t \in A, t < T_{[-1,1]}) \,  \hat{\mP}^y(\xi_s \in \dd x, s < T_{[-1,1]}) \Big) \, \dd s\\
&= \liminf_{y \searrow 1}\frac{1}{(y-1)^{\alpha\rho}}\int\limits_0^\infty \hat{\mP}^y(\xi_{t+s} \in A, t+s < T_{[-1,1]}) \, \dd s \\
&= \liminf_{y \searrow 1}\frac{1}{(y-1)^{\alpha\rho}}\int\limits_t^\infty \hat{\mP}^y(\xi_{s} \in A, s < T_{[-1,1]}) \, \dd s \\
&\leq \liminf_{y \searrow 1}\frac{1}{(y-1)^{\alpha\rho}}\int\limits_0^\infty \hat{\mP}^y(\xi_{s} \in A, s < T_{[-1,1]}) \, \dd s \\
&\leq \liminf_{y \searrow 1}\frac{1}{(y-1)^{\alpha\rho}}\int\limits_A \hat{u}_{[-1,1]}(y,x) \, \dd x \\
&\leq \liminf_{y \searrow 1}\int\limits_A \frac{u_{[-1,1]}(x,y)}{(y-1)^{\alpha\rho}}  \, \dd x.
\end{align*}
From Corollary \ref{cor_limit} we know that $(u_{[-1,1]}(x,y))/((y-1)^{\alpha\rho})$ converges for $y \searrow 1$ for all $x \in \mR \setminus [-1,1]$, in particular the function $y \mapsto (u_{[-1,1]}(x,y))/((y-1)^{\alpha\rho})$ is bounded on $(1,\eps)$ with $\eps < \inf A \cap (1,\infty)$ for all $x \in A$. But since $A$ is compact $(u_{[-1,1]}(x,y))/((y-1)^{\alpha\rho})$ is uniformly bounded for $x \in A$. Hence, we can apply dominated convergence to deduce:
\begin{align*}
\frac{1}{c_{\alpha\rho}} \int\limits_{\mR\setminus [-1,1]} \hat{\mP}^x(\xi_t \in A) \, \eta(\dd x) &\leq \int\limits_A \lim_{y \searrow 1} \frac{u_{[-1,1]}(x,y)}{(y-1)^{\alpha\rho}} \, \dd x\\
&= \frac{1}{c_{\alpha\rho}} \int_A v_1(x) \, \dd x\\
&= \frac{1}{c_{\alpha\rho}} \eta(A).
\end{align*}
Hence, we proved that $\eta$ is an excessive measure and as mentioned above it follows with Theorem XII.71 of \cite{Del_Mey_01} that $v_1$ is an excessive function.\smallskip

Now we show the characterising property of harmonicty, i.e.
$$\mE^x\big[ \1_{\{ T_{K^\mathrm{C}} < T_{[-1,1]} \}} v_1(\xi_{T_{K^\mathrm{C}}}) \big] = v_1(x), \quad x \in \mR\setminus [-1,1],$$
for all $K \subseteq \mR\setminus [-1,1]$ which are compact in $\mR\setminus [-1,1]$. If $x \in K^\mathrm{C}=(\mR\setminus[-1,1]) \setminus K$ the claim is clear. So we assume $x \in K$. The idea is to use the connection between $v_1$ and $u_{[-1,1]}$ from Lemma \ref{lemma_boring} and Proposition 6.2 (ii) of Kunita and Watanabe \cite{Kun_Wat_01}. The second tells us that $x \mapsto u_{[-1,1]}(x,y)$ is harmonic on $(\mR\setminus [-1,1])\setminus \lc y \rc$ for all $y \in \mR\setminus [-1,1]$, i.e.
$$\mE^x\big[ \1_{\{ T_{K^\mathrm{C}} < T_{[-1,1]} \}} u_{[-1,1]}(\xi_{T_{K^\mathrm{C}}},y) \big] = u_{[-1,1]}(x,y), \quad x,y \in \mR\setminus [-1,1], x \neq y,$$
for all $K \subseteq \mR\setminus [-1,1]$ which are compact in $ \mR\setminus [-1,1] \setminus \{ y \}$.\smallskip

Let us fix $x \notin [-1,1]$ and since $y$ tends to $1$ we can assume $x \neq y$ and $y \notin K$. We use monotone convergence twice and plug in the result of Lemma \ref{lemma_boring}:
\begin{align}\label{v_1_sep}
&\quad \mE^x\big[ \1_{\{ T_{K^\mathrm{C}}<T_{[-1,1]}\}} v_1(\xi_{T_{K^\mathrm{C}}}) \big]\nonumber \\
&= \lim_{\eps \searrow 0} \mE^x\big[ \1_{\{ \xi_{T_{K^\mathrm{C}}}>1+\eps \text{ or } \xi_{T_{K^\mathrm{C}}}<-1 \}} v_1(\xi_{T_{K^\mathrm{C}}}) \big]\nonumber  \\
&= \lim_{\eps \searrow 0} \lim_{y \searrow 1} \mE^x\big[ \1_{\{ \xi_{T_{K^\mathrm{C}}}>y+\eps \text{ or } \xi_{T_{K^\mathrm{C}}}<-1 \}} v_1(\xi_{T_{K^\mathrm{C}}})\big]\nonumber \\
&= \lim_{\eps \searrow 0}\lim_{y \searrow 1} \frac{2^{\alpha\hat\rho-1}c_{\alpha\rho}}{g(y)} \mE^x\big[ \1_{\{ \xi_{T_{K^\mathrm{C}}}>y+\eps \text{ or } \xi_{T_{K^\mathrm{C}}}<-1 \}} u_{[-1,1]}(\xi_{T_{K^\mathrm{C}}},y) \big] \nonumber \\
&\quad -  \lim_{\eps \searrow 0}\lim_{y \searrow 1}\mE^x \Big[\big(\sin(\pi\alpha\hat\rho)\1_{\{ \xi_{T_{K^\mathrm{C}}}>y+\eps \}}+ \sin(\pi\alpha\rho)\1_{\{\xi_{T_{K^\mathrm{C}}}<-1 \}}\big) \\ 
&\quad \quad \times \frac{(1-\alpha\hat\rho)|\xi_{T_{K^\mathrm{C}}}-y|^{\alpha-1}}{g(y)}\int\limits_1^{z(\xi_{T_{K^\mathrm{C}}},y)} (u-1)^{\alpha\rho} (u+1)^{\alpha\hat\rho-2} \, \dd u \Big] \nonumber\\
&\quad +  \lim_{\eps \searrow 0}\lim_{y \searrow 1}(\alpha-1)_+ \Big(\frac{\alpha\rho}{g(y)} \int\limits_1^y \psi_{\alpha\hat\rho}(u)\,\dd u -1 \Big) \nonumber \\
&\quad \quad \times \mE^x \Big[\sin(\pi\alpha\hat\rho)\1_{\{ \xi_{T_{K^\mathrm{C}}}>y+\eps \}} \int\limits_1^{\xi_{T_{K^\mathrm{C}}}} \psi_{\alpha\rho}(u)\, \dd u  + \sin(\pi\alpha\rho)\1_{\{\xi_{T_{K^\mathrm{C}}}<-1 \}}\int\limits_1^{|\xi_{T_{K^\mathrm{C}}}|} \psi_{\alpha\hat\rho}(u)\, \dd u\Big] \nonumber
\end{align}

We care about these three summands separately. We start with the last one which just appears if $\alpha>1$. From the proof of Corollary \ref{cor_limit} we already know that
$$\frac{\alpha\rho}{g(y)} \int\limits_1^y \psi_{\alpha\hat\rho}(u)\,\dd u -1$$
converges to $0$ for $y \searrow 0$. Furthermore, we get with monotone convergence:
\begin{align*}
&\quad\lim_{\eps \searrow 0}\lim_{y \searrow 1} \mE^x \Big[\sin(\pi\alpha\hat\rho)\1_{\{ \xi_{T_{K^\mathrm{C}}}>y+\eps \}} \int\limits_1^{\xi_{T_{K^\mathrm{C}}}} \psi_{\alpha\rho}(u)\, \dd u + \sin(\pi\alpha\rho)\1_{\{\xi_{T_{K^\mathrm{C}}}<-1 \}}\int\limits_1^{|\xi_{T_{K^\mathrm{C}}}|} \psi_{\alpha\hat\rho}(u)\, \dd u\Big] \\
&= \mE^x \Big[\sin(\pi\alpha\hat\rho)\1_{\{ \xi_{T_{K^\mathrm{C}}}>1 \}} \int\limits_1^{\xi_{T_{K^\mathrm{C}}}} \psi_{\alpha\rho}(u)\, \dd u + \sin(\pi\alpha\rho)\1_{\{\xi_{T_{K^\mathrm{C}}}<-1 \}}\int\limits_1^{|\xi_{T_{K^\mathrm{C}}}|} \psi_{\alpha\hat\rho}(u)\, \dd u\Big]\\
&= \frac{\pi}{\Gamma(1-\alpha\rho)\Gamma(1-\alpha\hat\rho)} \mE^x \Big[\1_{\{ T_{K^\mathrm{C}} < T_{[-1,1]} \}} h(\xi_{T_{K^\mathrm{C}}}) \Big],
\end{align*}
where $h$ is the invariant function which appears in Döring et al. \cite{Doer_Kyp_Wei_01}. But since the $h$-transformed process with this invariant function is transient with infinite lifetime (see Theorem 1.3 in that article) it leaves all compact sets almost surely. Hence, we have
\begin{align*}
\mE^x \Big[\1_{\{ T_{K^\mathrm{C}} < T_{[-1,1]} \}} \frac{h(\xi_{T_{K^\mathrm{C}}})}{h(x)} \Big] &= \mP^x_h(T_{K^\mathrm{C}} < \zeta) \\
&= \mP^x_h(T_{K^\mathrm{C}} < \infty) \\
&= 1,
\end{align*}
thus, $\mE^x \big[\1_{\{ T_{K^\mathrm{C}} < T_{[-1,1]} \}} h(\xi_{T_{K^\mathrm{C}}}) \big] = h(x) <\infty$. It follows that the third term of (\ref{v_1_sep}) is $0$. So it remains to consider the first and the second sumand of (\ref{v_1_sep}). With Proposition 6.2 (ii) of Kunita and Watanabe \cite{Kun_Wat_01} and Corollary \ref{cor_limit} we see for the first term:
\begin{align*}
&\quad\lim_{\eps \searrow 0}\lim_{y \searrow 1} \frac{2^{\alpha\hat\rho-1}c_{\alpha\rho}}{g(y)} \mE^x\big[ \1_{\{ T_{K^\mathrm{C}}<T_{[-1,1]}\}}\1_{\{ \xi_{T_{K^\mathrm{C}}}>y+\eps \text{ or } \xi_{T_{K^\mathrm{C}}}<-1 \}} u_{[-1,1]}(\xi_{T_{K^\mathrm{C}}},y) \big] \\
&= \lim_{y \searrow 1} \frac{2^{\alpha\hat\rho-1}c_{\alpha\rho}}{g(y)} \mE^x\big[ \1_{\{ T_{K^\mathrm{C}}<T_{[-1,1]}\}} u_{[-1,1]}(\xi_{T_{K^\mathrm{C}}},y) \big] \\
& \quad- \lim_{\eps \searrow 0}\lim_{y \searrow 1} \frac{2^{\alpha\hat\rho-1}c_{\alpha\rho}}{g(y)} \mE^x\big[ \1_{\{ T_{K^\mathrm{C}}<T_{[-1,1]}\}}\1_{\{ \xi_{T_{K^\mathrm{C}}} \in (1,y+\eps) \}} u_{[-1,1]}(\xi_{T_{K^\mathrm{C}}},y) \big] \\
&= \lim_{y \searrow 1} \frac{2^{\alpha\hat\rho-1}c_{\alpha\rho}}{g(y)} u_{[-1,1]}(x,y)\\
& \quad- \lim_{\eps \searrow 0}\lim_{y \searrow 1} \frac{2^{\alpha\hat\rho-1}c_{\alpha\rho}}{g(y)} \mE^x\big[ \1_{\{ T_{K^\mathrm{C}}<T_{[-1,1]}\}}\1_{\{ \xi_{T_{K^\mathrm{C}}} \in (1,y+\eps) \}} u_{[-1,1]}(\xi_{T_{K^\mathrm{C}}},y) \big] \\
&= v_1(x) - \lim_{\eps \searrow 0}\lim_{y \searrow 1} \frac{2^{\alpha\hat\rho-1}c_{\alpha\rho}}{g(y)} \mE^x \big[ \1_{\{ T_{K^\mathrm{C}}<T_{[-1,1]}\}}\1_{\{ \xi_{T_{K^\mathrm{C}}} \in (1,y+\eps) \}} u_{[-1,1]}(\xi_{T_{K^\mathrm{C}}},y) \big].
\end{align*}
Hence, to prove harmonicity of $v_1$ it sufficies to show
\begin{align}\label{beh_1}
\lim_{y \searrow 1}\mE^x&\Big[ \1_{\{ T_{K^\mathrm{C}}<T_{[-1,1]},|\xi_{T_{K^\mathrm{C}}}-y| >\eps \}}\frac{|\xi_{T_{K^\mathrm{C}}}-y|^{\alpha-1}}{g(y)}\int\limits_1^{z(\xi_{T_{K^\mathrm{C}}},y)} (u-1)^{\alpha\rho} (u+1)^{\alpha\hat\rho-2} \, \dd u \Big] =0
\end{align}
for all $\eps >0$ and
\begin{align}\label{beh_2}
\lim_{\eps \searrow 0} \lim_{y \searrow 1} \frac{1}{g(y)} \mE^x\big[ \1_{\{ T_{K^\mathrm{C}}<T_{[-1,1]}\}}\1_{\{ \xi_{T_{K^\mathrm{C}}} \in (1,y+\eps) \}} u_{[-1,1]}(\xi_{T_{K^\mathrm{C}}},y) \big]=0.
\end{align}

We start with (\ref{beh_1}). First we note that 
\begin{align}\label{est_z}
\begin{split}
\int\limits_1^{z(\xi_{T_{K^\mathrm{C}}},y)} (u-1)^{\alpha\rho} (u+1)^{\alpha\hat\rho-2} \, \dd u &\leq
(z(\xi_{T_{K^\mathrm{C}}},y)-1)^{\alpha\rho} \int\limits_1^{z(\xi_{T_{K^\mathrm{C}}},y)} (u+1)^{\alpha\hat\rho-2} \, \dd u \\
&\leq C_1 (z(\xi_{T_{K^\mathrm{C}}},y)-1)^{\alpha\rho}\\
&=\begin{cases}
C_1 \frac{(\xi_{T_{K^\mathrm{C}}}+1)^{\alpha\rho}(y-1)^{\alpha\rho}}{|\xi_{T_{K^\mathrm{C}}}-y|^{\alpha\rho}} \quad &\text{if } \xi_{T_{K^\mathrm{C}}}>y+\eps \\
C_1 \frac{(|\xi_{T_{K^\mathrm{C}}}|-1)^{\alpha\rho}(y-1)^{\alpha\rho}}{|\xi_{T_{K^\mathrm{C}}}-y|^{\alpha\rho}} \quad &\text{if } \xi_{T_{K^\mathrm{C}}}<-1
\end{cases}\\
&\leq C_1 \frac{(|\xi_{T_{K^\mathrm{C}}}|+1)^{\alpha\rho}(y-1)^{\alpha\rho}}{|\xi_{T_{K^\mathrm{C}}}-y|^{\alpha\rho}}
\end{split}
\end{align}
where $C_1 = \int\limits_1^{\infty} (u+1)^{\alpha\hat\rho-2} \, \dd u<\infty$.  With that we get on $\{ |\xi_{T_{K^\mathrm{C}}}-y| >\eps \}$ (without loss of generality we assume $y<2$):
\begin{align*}
&\quad \frac{|\xi_{T_{K^\mathrm{C}}}-y|^{\alpha-1}}{g(y)} \int\limits_1^{z(\xi_{T_{K^\mathrm{C}}},y)} (u-1)^{\alpha\rho} (u+1)^{\alpha\hat\rho-2} \, \dd u \\
&\leq  C_1 \frac{|\xi_{T_{K^\mathrm{C}}}-y|^{\alpha-1}}{g(y)} \frac{(|\xi_{T_{K^\mathrm{C}}}|+1)^{\alpha\rho}(y-1)^{\alpha\rho}}{|\xi_{T_{K^\mathrm{C}}}-y|^{\alpha\rho}}\\
&= C_1 |\xi_{T_{K^\mathrm{C}}}-y|^{\alpha\hat{\rho}-1} (|\xi_{T_{K^\mathrm{C}}}|+1)^{\alpha\rho}(y+1)^{1-\alpha\hat\rho}\\
&\leq C_1 |\xi_{T_{K^\mathrm{C}}}-y|^{\alpha\hat{\rho}-1} (|\xi_{T_{K^\mathrm{C}}}-y|^{\alpha\rho}+ (y+1)^{\alpha\rho}) (y+1)^{1-\alpha\hat\rho}\\
&= C_1 (y+1)^{1-\alpha\hat\rho}(|\xi_{T_{K^\mathrm{C}}}-y|^{\alpha-1}+ |\xi_{T_{K^\mathrm{C}}}-y|^{\alpha\hat{\rho}-1}(y+1)^{\alpha\rho})\\
&\leq C_1 3^{1-\alpha\hat\rho}(\eps^{\alpha-1}+ 3^{\alpha\rho}\eps^{\alpha\hat\rho-1})\\
&\leq C_1 3^{1+\alpha\rho-\alpha\hat\rho}(\eps^{\alpha-1}+ \eps^{\alpha\hat\rho-1}) =: C_\eps
\end{align*}

Hence, we can use dominated convergence to switch the $y$-limit and the expectation in \eqref{beh_1}. The following calculation on $\{ |\xi_{T_{K^\mathrm{C}}}-y| >\eps \}$ shows that the integrand converges pointwise to $0$ which shows \eqref{beh_1}:
\begin{align*}
&\quad\frac{|\xi_{T_{K^\mathrm{C}}}-y|^{\alpha-1}}{g(y)}\int\limits_1^{z(\xi_{T_{K^\mathrm{C}}},y)} (u-1)^{\alpha\rho} (u+1)^{\alpha\hat\rho-2} \, \dd u \\
&\leq 2^{\alpha\hat\rho-2} \frac{|\xi_{T_{K^\mathrm{C}}}-y|^{\alpha-1}}{g(y)}\int\limits_1^{z(\xi_{T_{K^\mathrm{C}}},y)} (u-1)^{\alpha\rho} \, \dd u \\
&= \frac{2^{\alpha\hat\rho-2}}{\alpha\rho+1} \frac{|\xi_{T_{K^\mathrm{C}}}-y|^{\alpha-1}}{g(y)}(z(\xi_{T_{K^\mathrm{C}}},y)-1)^{\alpha\rho+1}\\
&\leq \frac{2^{\alpha\hat\rho-2}}{\alpha\rho+1} \frac{|\xi_{T_{K^\mathrm{C}}}-y|^{\alpha-1}}{{(y-1)^{\alpha\rho}(y+1)^{\alpha\hat\rho-1}}} \frac{(|\xi_{T_{K^\mathrm{C}}}|+1)^{\alpha\rho+1}(y-1)^{\alpha\rho+1}}{|\xi_{T_{K^\mathrm{C}}}-y|^{\alpha\rho+1}}\\
&= \frac{2^{\alpha\hat\rho-2}}{\alpha\rho+1} (|\xi_{T_{K^\mathrm{C}}}-y|^{\alpha\hat\rho-2}(|\xi_{T_{K^\mathrm{C}}}|+1)^{\alpha\rho+1}(y-1)(y+1)^{1-\alpha\hat\rho}\\
&\overset{y \searrow 1}{\longrightarrow} 0 
\end{align*}
where we used the same estimate for $z(\xi_{T_{K^\mathrm{C}}},y)-1$ as in \eqref{est_z}. This shows (\ref{beh_1}).\smallskip

Now we show (\ref{beh_2}). We define $a = \min(\inf(K \cap (1,\infty),-\sup(K \cap (-\infty,-1))$. Sinc $y$ tends to $1$ and $\eps$ to $0$ we can assume $a>y+\eps$. It follows that $\xi_{T_{K^\mathrm{C}}} \in (1,y+\eps)$ is just possible if $T_{K^\mathrm{C}} = T_{(-a,a)}$. So we have
\begin{align*}
&\quad \mE^x\big[ \1_{\{ T_{K^\mathrm{C}}<T_{[-1,1]}\}}\1_{\{ \xi_{T_{K^\mathrm{C}}} \in (1,y+\eps) \}} u_{[-1,1]}(\xi_{T_{K^\mathrm{C}}},y) \big]\\
&\leq \mE^x\big[ \1_{\{ \xi_{T_{(-a,a)}}\in (1,y+\eps),T_{(-a,a)}<\infty \}} u_{[-1,1]}(\xi_{T_{(-a,a)}},y) \big].
\end{align*}
Further $\xi_{T_{(-a,a)}} =y $ happens with zero probability and with this follows
\begin{align*}
&\quad\mE^x\big[ \1_{\{ \xi_{T_{(-a,a)}} \in (1,y+\eps),T_{(-a,a)}<\infty \}} u_{[-1,1]}( \xi_{T_{(-a,a)}},y) \big]\\
&=\mE^x\big[ \1_{\{ \xi_{T_{(-a,a)}} \in (1,y),T_{(-a,a)}<\infty \}} u_{[-1,1]}(\xi_{T_{(-a,a)}},y) \big]\\
&\quad+\mE^x\big[ \1_{\{ \xi_{T_{(-a,a)}} \in (y,\eps),T_{(-a,a)}<\infty \}} u_{[-1,1]}( \xi_{T_{(-a,a)}},y) \big].
\end{align*}
With the formulas for $u_{[-1,1]}$ of Profeta and Simon \cite{Pro_Sim_01} we get for $\xi_{T_{(-a,a)}} \in (1,y)$:
\begin{align*}
u_{[-1,1]}(\xi_{T_{(-a,a)}},y) &\leq \frac{2^{1-\alpha}}{\Gamma(\alpha\rho)\Gamma(\alpha\hat\rho)} (y-\xi_{T_{(-a,a)}})^{\alpha-1} \int\limits_1^{z(\xi_{T_{(-a,a)}},y)} (u-1)^{\alpha\hat\rho-1}(u+1)^{\alpha\rho-1}\, \dd u\\
&\leq \frac{2^{-\alpha\hat\rho}}{\alpha\hat\rho\Gamma(\alpha\rho)\Gamma(\alpha\hat\rho)} (y-\xi_{T_{(-a,a)}})^{\alpha-1} (z(\xi_{T_{(-a,a)}},y)-1)^{\alpha\hat\rho}\\
&\leq \frac{2^{-\alpha\hat\rho}}{\alpha\hat\rho\Gamma(\alpha\rho)\Gamma(\alpha\hat\rho)} (y-\xi_{T_{(-a,a)}})^{\alpha-1} \Big(\frac{(\xi_{T_{(-a,a)}}-1)(y+1)}{y-\xi_{T_{(-a,a)}}}\Big)^{\alpha\hat\rho}\\
&= \frac{1}{\alpha\hat\rho\Gamma(\alpha\rho)\Gamma(\alpha\hat\rho)} (y-\xi_{T_{(-a,a)}})^{\alpha\rho-1} (\xi_{T_{(-a,a)}}-1)^{\alpha\hat\rho}\\
&\leq \frac{1}{\alpha\hat\rho\Gamma(\alpha\rho)\Gamma(\alpha\hat\rho)} (y-\xi_{T_{(-a,a)}})^{\alpha\rho-1} (y-1)^{\alpha\hat\rho}.
\end{align*}
It follows for $x>a$ with Theorem 1.1 of Kyprianou et al. \cite{Kyp_Par_Wat_01} and the scaling property:
\begin{align*}
&\quad\mE^x \la \1_{\lc \xi_{T_{(-a,a)}} \in (1,y),T_{(-a,a)}<\infty \rc} u_{[-1,1]}(\xi_{T_{(-a,a)}},y) \ra \\
&\leq \frac{1}{\Gamma(\alpha\rho)\Gamma(\alpha\hat\rho)} (y-1)^{\alpha\hat\rho}\mE^x \la \1_{\lc \xi_{T_{(-a,a)}} \in (1,y), T_{(-a,a)}<\infty\rc}(y-\xi_{T_{(-a,a)}})^{\alpha\rho-1} \ra \\
&\leq \frac{\sin(\pi\alpha\hat\rho)}{\pi \Gamma(\alpha\rho)\Gamma(\alpha\hat\rho)} (x+a)^{\alpha\rho}(x-a)^{\alpha\hat\rho} (y-1)^{\alpha\hat\rho} \int\limits_{(1,y)} \frac{(y-u)^{\alpha\rho-1}}{(a+u)^{\alpha\rho}(a-u)^{\alpha\hat\rho} (x-u)}\, \dd u\\
&\leq \frac{a\sin(\pi\alpha\hat\rho)}{\pi \Gamma(\alpha\rho)\Gamma(\alpha\hat\rho)} \frac{(x+a)^{\alpha\rho}(x-a)^{\alpha\hat\rho}}{(a+1)^{\alpha\rho}(a-y)^{\alpha\hat\rho} (x-y)} (y-1)^{\alpha\hat\rho} \int\limits_{(1,y)} (y-u)^{\alpha\rho-1} \dd u\\
&= \frac{a\sin(\pi\alpha\hat\rho)}{\pi \alpha\rho \Gamma(\alpha\rho)\Gamma(\alpha\hat\rho)} \frac{(x+a)^{\alpha\rho}(x-a)^{\alpha\hat\rho}}{(a+1)^{\alpha\rho}(a-y)^{\alpha\hat\rho} (x-y)} (y-1)^{\alpha\hat\rho} (y-1)^{\alpha\rho}.
\end{align*}
With this estimate we see immediately 
$$\lim_{y \searrow 1} \frac{1}{(y-1)^{\alpha\rho}} \mE^x \big[ \1_{\{ \xi_{T_{(-a,a)}} \in (1,y) \}} u_{[-1,1]}(\xi_{T_{(-a,a)}},y) \big] = 0$$
for $x>1$. For $x<-1$ we use Theorem 1.1 of Kyprianou et al. \cite{Kyp_Par_Wat_01} in a similar way to deduce the analogous claim. Similarly we get for $\xi_{T_{(-a,a)}} \in (y,y+\eps)$ (without loss of generality $y+\eps<2$):
\begin{align*}
u_{[-1,1]}(\xi_{T_{(-a,a)}},y) &\leq \frac{2^{1-\alpha}}{\Gamma(\alpha\rho)\Gamma(\alpha\hat\rho)}(\xi_{T_{(-a,a)}}-y)^{\alpha-1} \int\limits_1^{z(\xi_{T_{K^\mathrm{C}}},y)} (u-1)^{\alpha\rho-1} (u+1)^{\alpha\hat\rho-1} \, \dd u\\
&\leq \frac{2^{-\alpha\rho}}{\alpha\rho\Gamma(\alpha\rho)\Gamma(\alpha\hat\rho)} (\xi_{T_{(-a,a)}}-y)^{\alpha-1} (z(\xi_{T_{(-a,a)}},y)-1)^{\alpha\rho} \\
&= \frac{2^{-\alpha\rho}}{\alpha\rho\Gamma(\alpha\rho)\Gamma(\alpha\hat\rho)} (\xi_{T_{(-a,a)}}-y)^{\alpha\hat\rho-1} (\xi_{T_{(-a,a)}}+1)^{\alpha\rho}(y-1)^{\alpha\rho}\\
&\leq \frac{2^{-\alpha\rho} 3^{\alpha\rho}}{\alpha\rho\Gamma(\alpha\rho)\Gamma(\alpha\hat\rho)}(y-1)^{\alpha\rho} (\xi_{T_{(-a,a)}}-y)^{\alpha\hat\rho-1}.
\end{align*}
Define $C_2 := \frac{2^{-\alpha\rho} 3^{\alpha\rho}}{\alpha\rho\Gamma(\alpha\rho)\Gamma(\alpha\hat\rho)}$ and we get again with Theorem 1.1 of \cite{Kyp_Par_Wat_01} for $x>1$
\begin{align*}
&\quad\mE^x \big[ \1_{\{ \xi_{T_{(-a,a)}} \in (y,y+\eps),T_{(-a,a)}<\infty \}} u_{[-1,1]}(\xi_{T_{(-a,a)}},y) \big] \\
&\leq C_2(y-1)^{\alpha\rho} \mE^x \big[ \1_{\{ \xi_{T_{(-a,a)}} \in (y,y+\eps),T_{(-a,a)}<\infty \}} (\xi_{T_{(-a,a)}}-y)^{\alpha\hat\rho-1} \big] \\
&\leq \frac{C_2a\sin(\pi\alpha\hat\rho)}{\pi} (x+a)^{\alpha\rho}(x-a)^{\alpha\hat\rho} (y-1)^{\alpha\rho} \int\limits_{(y,y+\eps)} \frac{(u-y)^{\alpha\hat\rho-1}}{(a+u)^{\alpha\rho}(a-u)^{\alpha\hat\rho} (x-u)}\, \dd u \\
&\leq \frac{C_2a\sin(\pi\alpha\hat\rho)}{\pi} \frac{(x+a)^{\alpha\rho}(x-a)^{\alpha\hat\rho}(y-1)^{\alpha\rho}}{(a+y)^{\alpha\rho}(a-(y+\eps))^{\alpha\hat\rho} (x-(y+\eps))} \int\limits_{(y,y+\eps)} (u-y)^{\alpha\hat\rho-1} \dd u\\
&= \frac{C_2a\sin(\pi\alpha\hat\rho)}{\pi} \frac{(x+a)^{\alpha\rho}(x-a)^{\alpha\hat\rho}}{(a+y)^{\alpha\rho}(a-(y+\eps))^{\alpha\hat\rho} (x-(y+\eps))} \frac{(y-1)^{\alpha\rho} \eps^{\alpha\hat\rho}}{\alpha\hat\rho}.
\end{align*}
So we have:
\begin{align*}
&\quad\lim_{\eps \searrow 0}\lim_{y \searrow 1} \frac{1}{(y-1)^{\alpha\rho}} \mE^x\big[ \1_{\lc T_{K^\mathrm{C}}<T_{[-1,1]}\rc}\1_{\lc \xi_{T_{K^\mathrm{C}}} \in (1,y+\eps) \rc} u_{[-1,1]}(\xi_{T_{K^\mathrm{C}}},y) \big]\\
&\leq \frac{C_2\sin(\pi\alpha\hat\rho)}{\pi\alpha\hat\rho} \lim_{\eps \searrow 0}\lim_{y \searrow 1} \Big[\frac{(x+a)^{\alpha\rho}(x-a)^{\alpha\hat\rho}}{(a+y)^{\alpha\rho}(a-(y+\eps))^{\alpha\hat\rho} (x-(y+\eps))} \eps^{\alpha\hat\rho} \Big]\\
&= 0.
\end{align*}
The claim for $x<-1$ follows again similarly. This shows (\ref{beh_2}) and hence, we have harmonicity of $v_1$.
\end{proof}

\begin{remark}
If $\alpha \leq 1$, another (maybe more elegant) way of proving harmonicity of $v_1$ is to prove that the renewal densities of the MAP which corresponds to the stable process via the Lamperti-Kiu transform (for explicit expressions see Corollary 1.6 of \cite{Kyp_Riv_Sen_01}) are harmonic functions for the MAP killed on entering the negative half-line. This claim should be true since Silverstein \cite{Sil_01} proved the analogous claim for a Lévy process which does not drift to $-\infty$. One can show that $v_1$ and $v_{-1}$ are just these renewal densities (the argument replaced by the logarithm). Via the Lamperti-Kiu transform one could obtain harmonicity of $v_1$ and $v_{-1}$ for the stable process killed in $[-1,1]$.  
\end{remark}

\subsection{Behaviour at the killing time}\label{sec_proof_paths}
Before we start with the proofs we should discuss more elementary properties of $v_1$ and $v_{-1}$. First, it can be seen immediately that $v_1$ has a pole in $1$ and $v_{-1}$ has a pole in $-1$ and hence $v:= v_1 + v_{-1}$ has poles in $1$ and $-1$. Further $v_1$ is bounded on $(-\infty,-1) \cup (K,\infty)$ for all $K>1$. For $\alpha \leq 1$ this is obvious and for $\alpha>1$ this can be seen via showing that $v_1$ converges for $x \rightarrow \pm \infty$ (a similar convergence was shown in \cite{Doer_Kyp_Wei_01} in the proof of Lemma 3.3). Similarly $v_{-1}$ is bounded on $(-\infty,-K) \cup (1,\infty)$ for all $K>1$. It follows obviously that $v$ is bounded on $(-\infty,-K_1) \cup (K_2,\infty)$ for all $K_1,K_2>1$.\smallskip

For the first results we need to define the potential of the $h$-transformed process via
$$U_{v_1}(x,\dd y) = \mE^x_{v_1} \Big[ \int\limits_0^\zeta \1_{\lc \xi_t \in \dd y \rc} \, \dd t \Big],\quad x,y \notin [-1,1],$$
which is the expected time the process $(\xi,\mP^x_{v_1})$ stays in $\dd y$ until it is killed. With a Fubini flip we obtain
$$U_{v_1}(x,\dd y) = \frac{v_1(y)}{v_1(x)}\,U_{[-1,1]}(x,\dd y) = \frac{v_1(y)}{v_1(x)}u_{[-1,1]}(x,y)\, \dd y.$$
The following result shows on the one hand that the $h$-transformed process is almost surely bounded and second that the expected time the process stays in a set of the form $[-b,-1)\cup(1,b]$ is finite.
\begin{lemma}\label{lemma_help_v1}
Let $\xi$ be an $\alpha$-stable process with $\alpha \in (0,2)$ and both sided jumps. Then it holds for $x\notin [-1,1]$:
\begin{enumerate}[(i)]
\item $\mP_{v_1}^x(T_{(-\infty,-d]\cup[d,\infty)} < \zeta \, \forall d>1)=0$.
\item $U_{v_1}(x,[-b,-1)\cup(1,b]) <\infty$ for all $b>1$.
\end{enumerate}
\end{lemma}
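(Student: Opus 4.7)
For part~(i), I plan to rewrite the probability via the strong Markov form of the $h$-transform. With $T_d := T_{(-\infty,-d] \cup [d,\infty)}$, equation~\eqref{eq_htrafo_stoppingtime} gives
\begin{align*}
\mP_{v_1}^x(T_d < \zeta) = \frac{1}{v_1(x)}\mE^x\big[\1_{\{T_d < T_{[-1,1]}\}} v_1(\xi_{T_d})\big],
\end{align*}
and since the events $\{T_d < \zeta\}$ decrease in $d$, the lemma reduces to showing this tends to $0$ as $d \to \infty$. My argument will be dominated convergence. The integrand is uniformly bounded by the finite quantity $\sup_{|y| \ge 2} v_1(y)$, thanks to the preliminary observation that $v_1$ is bounded on $(-\infty,-1) \cup (K,\infty)$ for every $K > 1$. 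Pointwise convergence splits on the recurrence/transience dichotomy. For $\alpha \ge 1$ the stable process is set-recurrent, so $T_{[-1,1]} < \infty$ almost surely and the càdlàg path is bounded on $[0,T_{[-1,1]}]$; hence $T_d > T_{[-1,1]}$ for all $d$ large enough and the indicator eventually vanishes. For $\alpha < 1$ the same argument handles $\{T_{[-1,1]} < \infty\}$, while on the complementary event transience forces $|\xi_{T_d}| \ge d \to \infty$, and the explicit form of $v_1$ yields $v_1(y) \to 0$ at rate $|y|^{\alpha - 1}$, so $v_1(\xi_{T_d}) \to 0$ as well. Either way $\1_{\{T_d < T_{[-1,1]}\}} v_1(\xi_{T_d})$ tends to $0$ pointwise, and dominated convergence closes the argument.

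For part~(ii), the Fubini identity for $h$-transforms recorded just before the lemma gives
\begin{align*}
U_{v_1}(x, [-b,-1) \cup (1,b]) = \frac{1}{v_1(x)} \int_{[-b,-1) \cup (1,b]} v_1(y)\, u_{[-1,1]}(x,y) \, \dd y,
\end{align*}
so the task reduces to integrability of $y \mapsto v_1(y)\, u_{[-1,1]}(x,y)$ on $A := [-b,-1) \cup (1,b]$. I plan to inspect the potentially singular loci separately. Near $y = 1^+$, Corollary~\ref{cor_limit} gives $u_{[-1,1]}(x,y) \sim c_{\alpha\rho}^{-1} v_1(x)(y-1)^{\alpha\rho}$ while the explicit formula yields $v_1(y) \sim 2^{\alpha\rho}\sin(\pi\alpha\hat\rho)(y-1)^{\alpha\hat\rho - 1}$, so the product is of order $(y-1)^{\alpha - 1}$, which is integrable for $\alpha > 0$. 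Near $y = -1^-$ both factors stay bounded (in fact they vanish, of orders $\alpha\rho$ respectively $\alpha\hat\rho$), so no singularity arises. If $x \in A$, the potential density carries an integrable singularity of order $|x-y|^{\alpha-1}$ at $y = x$ inherited from the free stable potential, against which $v_1$ is continuous and locally bounded. Everywhere else on $A$ both factors are continuous and bounded, so the integral is finite.

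The main technical burden is in part~(ii): the pointwise limit of Corollary~\ref{cor_limit} has to be promoted to a local upper bound of the form $u_{[-1,1]}(x, y) \le C(x)(y-1)^{\alpha\rho}$ on some interval $(1, 1 + \delta)$, together with an analogous bound near $-1$, which one extracts directly from the Profeta--Simon expression already used in the proof of Lemma~\ref{lemma_boring}. Part~(i) is considerably softer and rests essentially on the decay/boundedness of $v_1$ at infinity combined with the recurrence versus transience dichotomy of the stable process.
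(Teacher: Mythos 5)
Your argument is correct and follows essentially the same route as the paper: dominated convergence with the bound $\sup_{|y|\ge K}v_1(y)<\infty$ and the transience/recurrence dichotomy for (i), and the Fubini identity plus inspection of the singularities at $\pm1$ and at $y=x$ for (ii). The only cosmetic difference is in (ii) near $y=1$, where the paper avoids the uniform upper bound $u_{[-1,1]}(x,y)\le C(x)(y-1)^{\alpha\rho}$ you flag as a burden: since $u_{[-1,1]}(x,\cdot)$ tends to $0$ at $1$ it is simply bounded there, and $(y-1)^{\alpha\hat\rho-1}$ is already integrable because $\alpha\hat\rho>0$, so the sharper rate is not needed.
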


\begin{proof}
(i) We already noticed that $v_1$ is bounded on $(-\infty,-K)\cup(K,\infty)$ for all $K>1$. So we obtain, applying dominated convergence in the last equality,
\begin{align*}
\mP_{v_1}^x(T_{(-\infty,-d]\cup[d,\infty)} <\zeta \, \forall d>1) &= \lim_{d \rightarrow \infty}  \mP_{v_1}^x(T_{(-\infty,-d]\cup[d,\infty)} <\zeta)\\
&= \lim_{d \rightarrow \infty} \mE^x \Big[ \1_{\{ T_{(-\infty,-d]\cup[d,\infty)}<T_{[-1,1]} \}} \frac{v_1(\xi_{T_{(-\infty,-d]\cup[d,\infty)}})}{v_1(x)} \Big] \\
&= \mE^x \Big[ \lim_{d \rightarrow \infty}  \1_{\{ T_{(-\infty,-d]\cup[d,\infty)}<T_{[-1,1]}\}} \frac{v_1(\xi_{T_{(-\infty,-d]\cup[d,\infty)}})}{v_1(x)} \Big].
\end{align*}
In the case $\alpha<1$ we use that $v_1(y)$ converges to $0$ for $y \rightarrow \pm \infty$. If $\alpha \geq 1$ we see that $\1_{\{ T_{(-\infty,-d]\cup[d,\infty)}<T_{[-1,1]}\}}$ converges to $0$ almost surely since $(\xi,\mP^x)$ is recurrent. This shows (i).\smallskip

(ii) It holds
$$U_{v_1}(x,[-b,-1)\cup(1,b]) = \frac{1}{v_1(x)}\int\limits_{[-b,-1)\cup(1,b]} v_1(y) u_{[-1,1]}(x,y) \, \dd y.$$
Since $v_1$ is bounded and $u_{[-1,1]}(x,\cdot)$ is integrable on all compact intervals, the only points where this integral could be infinite, are the boundary points $1$ and $-1$. From the explicit formulas of \cite{Pro_Sim_01} we see that $u_{[-1,1]}(x,y)$ converges to $0$ for $y \rightarrow \pm 1$. Further $v_1(y)$ behaves as $(y-1)^{\alpha\hat{\rho}-1}$ for $y \searrow 1$ and as $(|y|-1)^{\alpha\rho}$ for $y \nearrow -1$. Since $\alpha\rho,\alpha\hat\rho \in (0,1)$ these arguments shows $U_{v_1}(x,[-b,-1)\cup(1,b])<\infty$.
\end{proof}

Combining the two statements of Lemma \ref{lemma_help_v1} we can show Proposition \ref{thm_abs_above}.
\begin{proof}[Proof of Proposition \ref{thm_abs_above}]
We show that $\mP_{v_1}^x(\zeta < \infty)=1$ and $\mP_{v_1}^x(\xi_{\zeta-}=1)=1$ and start with the first equality. From Lemma \ref{lemma_help_v1} (ii) we know
$$\mP_{v_1}^x\Big(\int\limits_0^\zeta \1_{\lc \xi_t \in [-b,-1)\cup(1,b] \rc} \, \dd t <\infty \Big)=1$$
for all $b>1$. By the continuity of probability measures we see
\begin{align*}
&\quad\mP_{v_1}^x\Big(\int\limits_0^\zeta \1_{\{ \xi_t \in [-b,-1)\cup(1,b] \}} \, \dd t <\infty  \, \forall b>1\Big)\\
&= \lim\limits_{b \rightarrow \infty} \mP_{v_1}^x \Big(\int\limits_0^\zeta \1_{\{ \xi_t \in [-b,-1)\cup(1,b] \}} \, \dd t <\infty \Big) = 1.
\end{align*}
On the other hand Lemma \ref{lemma_help_v1} (i) yields
$$\mP_{v_1}^x(\exists d>1: \, T_{(-\infty,-d]\cup [d,\infty)} \geq \zeta)=1.$$
Since the intersection of two events with probability $1$ has again probability $1$ it follows:
\begin{align*}
\mP_{v_1}^x(\zeta < \infty) &= \mP_{v_1}^x \Big(\int\limits_0^\zeta \1_{\{ \xi_t \in \mR \setminus [-1,1] \}} \, \dd t <\infty \Big)\\
&\geq \mP_{v_1}^x \Big(\int\limits_0^\zeta \1_{\{ \xi_t \in [-b,-1)\cup(1,b] \}} \, \dd t <\infty \, \forall b>1  \, , \, \exists d>1: T_{(-\infty,-d]\cup[d,\infty)} \geq \zeta \Big)\\
&= 1.
\end{align*}

To prove $\mP_{v_1}^x(\xi_{\zeta-}=1)=1$ we use a procedure which is inspired by Chaumont \cite{Chau_01}. Using that $v_1$ is harmonic (Theorem \ref{thm_harm}) we see for $x \notin [-1,1]$ and
$$M_{a,b} = (-\infty,-b) \cup (-a,-1) \cup (1,1+\eps) \cup (b,\infty)$$
with $1<a<b$ and $\eps>0$ (obviously the complement of $M_{a,b}$ is compact in $\mR\setminus [-1,1]$):
$$\mE^x \big[ \1_{\{ T_{M_{a,b}} < T_{[-1,1]} \}} v_1(\xi_{T_{M_{a,b}}}) \big] = v_1(x).$$
It follows that
$$\mP_{v_1}^x(T_{M_{a,b}} < \zeta) = \frac{1}{v_1(x)}\mE^x \big[ \1_{\{ T_{M_{a,b}} < T_{[-1,1]} \}} v_1(\xi_{T_{M_{a,b}}}) \big] = 1.$$
From Lemma \ref{lemma_help_v1} we know on the one hand
\begin{align}\label{help_11}
\mP_{v_1}^x(T_{(-\infty,-b) \cup (b,\infty)} < \zeta \, \forall \, b>1)&= 0.
\end{align}
On the other hand we see, applying dominated convergence using that $v_1$ is bounded on $(-\infty,-1)$, 
\begin{align}\label{help_12}
\begin{split}
\mP_{v_1}^x(T_{(-a,-1)} < \zeta \, \forall \, a>1)&= \lim_{a \searrow 1} \mP_{v_1}^x(T_{(-a,-1)} < \zeta) \\
&= \lim_{a \searrow 1} \frac{1}{v_1(x)}\mE^x \big[ \1_{\{ T_{(-a,-1)} < T_{[-1,1]} \}} v_1(\xi_{T_{(-a,-1)}}) \big] \\
&= \frac{1}{v_1(x)} \mE^x \big[ \lim_{a \searrow 1} \1_{\{ T_{(-a,-1)} < T_{[-1,1]} \}} v_1(\xi_{T_{(-a,-1)}}) \big] \\
&= 0.
\end{split}
\end{align}
In the last step we used that $v_1(y)$ converges to $0$ for $y \nearrow -1$. Note that this argument does not work if $(-a,-1)$ is replaced by $(1,a)$ because $v_1$ has a pole in $1$. Now we plug in \eqref{help_11} and \eqref{help_12} to obtain for all $\eps>0$:
\begin{align*}
&\quad \mP_{v_1}^x(T_{(1,1+\eps)} < \zeta)\\
&= \mP_{v_1}^x(\{ T_{(1,1+\eps)} < \zeta \} \cup \{ T_{(-a,-1)} < \zeta \, \forall \, a>1 \} \cup \{ T_{(-\infty,-b) \cup (b,\infty)} < \zeta \, \forall \, b>1 \}) \\
&= \lim_{b \rightarrow \infty} \lim_{a \searrow 1} \mP_{v_1}^x(\lc T_{(1,1+\eps)} < \zeta \rc \cup \lc T_{(-a,-1)} < \zeta \rc \cup \lc T_{(-\infty,-b ) \cup (b,\infty)} < \zeta \rc) \\
&= \lim_{b \rightarrow \infty}  \lim_{a \searrow 1} \mP_{v_1}^x( T_{M_{a,b}} < \zeta) \\
&= 1.
\end{align*}
With this in hand we show the final claim that $\xi_{\zeta-}=1$ almost surely under $\mP^x_{v_1}$.  By $(1,1+\delta)^{\mathrm{C}}$ we mean as usual $\mR\setminus [-1,1] \setminus (1,1+\delta)$.
\begin{align}\label{leftlimit}
\begin{split}
\mP^x_{v_1}(\xi_{\zeta-}=1) &= \mP^x_{v_1}(\forall \,\delta>0 \, \exists\, \eps \in (0,\delta] \,:\, \xi_t \in (1,1+\delta) \,\forall \, t \in [T_{(1,1+\eps)},\zeta) )\\
&= \lim_{\delta \searrow 0} \lim_{\eps \searrow 0} \mP^x_{v_1}(\xi_t \in (1,1+\delta) \,\forall \, t \in [T_{(1,1+\eps)},\zeta) ) \\
&=  \lim_{\delta \searrow 0} \lim_{\eps \searrow 0} \mE^x_{v_1}\big[ \mP_{v_1}^{\xi_{T_{(1,1+\eps)}}}(\xi_t \in (1,1+\delta) \,\forall \, t \in [0,\zeta)) \big] \\
&=  \lim_{\delta \searrow 0} \lim_{\eps \searrow 0} \mE^x_{v_1}\big[ \mP_{v_1}^{\xi_{T_{(1,1+\eps)}}}(T_{(1,1+\delta)^{\mathrm{C}}} \geq \zeta) \big] \\
&= 1- \lim_{\delta \searrow 0} \lim_{\eps \searrow 0} \mE^x_{v_1}\big[ \mP_{v_1}^{\xi_{T_{(1,1+\eps)}}}(T_{(1,1+\delta)^{\mathrm{C}}} < \zeta) \big] \\
&= 1- \lim_{\delta \searrow 0}  \mE^x_{v_1}\big[ \lim_{\eps \searrow 0}\mP_{v_1}^{\xi_{T_{(1,1+\eps)}}}(T_{(1,1+\delta)^{\mathrm{C}}} < \zeta) \big] \\
&= 1- \lim_{\delta \searrow 0}  \mE^x_{v_1}\big[ \lim_{\eps \searrow 0}\mP_{v_1}^{1+\eps}(T_{(1,1+\delta)^{\mathrm{C}}} < \zeta) \big].
\end{split}
\end{align}
In the second equality we used that $T_{(1,1+\eps)}<\zeta$ almost surely and in the third equality we used the strong Markov property of $(\xi,\mP^x_{v_1})$. Let us consider the $\eps$-limit inside the expectation. Using the definition of $\mP^x_{v_1}$ we see:
\begin{align*}
\mP_{v_1}^{1+\eps}(T_{(1,1+\delta)} < \zeta) &= \frac{1}{v_1(1+\eps)} \mE^x \big[ \1_{\{T_{(1,1+\delta)^{\mathrm{C}}} < T_{[-1,1]} \}} v_1(\xi_{T_{(1,1+\delta)^{\mathrm{C}}}}) \big].
\end{align*}
Since for fixed $\delta>0$ the function $v_1$ is bounded on $(-\infty,-1)\cup (1+\delta,\infty)$ and $\lim_{\eps \searrow 0}v_1(1+\eps) = \infty$ it follows that
$$\lim_{\eps \searrow 0}\mP_{v_1}^{1+\eps}(T_{(1,1+\delta)} < \zeta) = 0$$
and with \eqref{leftlimit} we conclude $\mP^x_{v_1}(\xi_{\zeta-}=1)=1$.

\end{proof}

Proposition \ref{thm_abs_both} can be proved similarly to Proposition \ref{thm_abs_above} using the following lemma:
\begin{lemma}\label{lemma_help_v}
Let $\xi$ be an $\alpha$-stable process with $\alpha \in (0,2)$ and both sided jumps. Then it holds:
\begin{enumerate}[(i)]
\item $\mP_{v}^x(T_{(-\infty,-d]\cup[d,\infty)} < \zeta \, \forall d>1)=0$.
\item $U_{v}(x,[-b,-1)\cup(1,b]) <\infty$ for all $b>1$.
\end{enumerate}
\end{lemma}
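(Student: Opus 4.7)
The plan is to follow the proof of Lemma \ref{lemma_help_v1} essentially line by line, exploiting the same structural properties now for $v=v_1+v_{-1}$. The key preliminary observations from the discussion opening Section \ref{sec_proof_paths} are that both $v_1$ and $v_{-1}$ are bounded on $(-\infty,-K)\cup(K,\infty)$ for every $K>1$, hence so is $v$, and that $v_1$ has its (integrable) singularity only at $1$ while $v_{-1}$ has its (integrable) singularity only at $-1$.

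For (i) I would simply repeat the dominated convergence computation from Lemma \ref{lemma_help_v1}(i) with $v_1$ replaced by $v$: boundedness of $v$ on sets of the form $(-\infty,-K)\cup(K,\infty)$ for $K>1$ lets us pull the limit inside the expectation, giving
\[
\mP_{v}^x\bigl(T_{(-\infty,-d]\cup[d,\infty)}<\zeta\ \forall\, d>1\bigr)=\mE^x\Bigl[\lim_{d\to\infty}\1_{\{T_{(-\infty,-d]\cup[d,\infty)}<T_{[-1,1]}\}}\tfrac{v(\xi_{T_{(-\infty,-d]\cup[d,\infty)}})}{v(x)}\Bigr].
\]
For $\alpha<1$ the explicit formulas show $v_1(y), v_{-1}(y)=O(|y|^{\alpha-1})$ as $|y|\to\infty$ (recall $\alpha\rho+\alpha\hat\rho=\alpha$), so $v(y)\to 0$ and the right-hand side vanishes. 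For $\alpha\ge 1$ the stable process is (set) recurrent, so the indicator $\1_{\{T_{(-\infty,-d]\cup[d,\infty)}<T_{[-1,1]}\}}$ vanishes in the almost-sure limit and the expression is again zero.

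For (ii) the $h$-transform formula gives $U_v(x,\dd y)=\tfrac{v(y)}{v(x)}u_{[-1,1]}(x,y)\,\dd y$, so it suffices to prove that $\int_{[-b,-1)\cup(1,b]}v(y)u_{[-1,1]}(x,y)\,\dd y<\infty$. Both factors are continuous, hence bounded, on compact subsets of $\mR\setminus[-1,1]$, so the only issue is integrability near the two boundary points. Reading off the explicit expressions one finds $v(y)\sim 2^{\alpha\rho}\sin(\pi\alpha\hat\rho)(y-1)^{\alpha\hat\rho-1}$ as $y\searrow 1$ (the $v_1$-term dominates while $v_{-1}(y)\to 0$ there) and $v(y)\sim 2^{\alpha\hat\rho}\sin(\pi\alpha\rho)(|y|-1)^{\alpha\rho-1}$ as $y\nearrow -1$ (the $v_{-1}$-term dominates while $v_1(y)\to 0$). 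Since $u_{[-1,1]}(x,\cdot)$ tends to $0$ at both boundary points by the formulas of Profeta--Simon \cite{Pro_Sim_01} (in particular is bounded near $\pm 1$), the integrand is controlled by a constant times $(y-1)^{\alpha\hat\rho-1}$ near $1$ and a constant times $(|y|-1)^{\alpha\rho-1}$ near $-1$, both of which are integrable because $\alpha\rho,\alpha\hat\rho\in(0,1)$.

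The genuinely new feature compared with the $v_1$-case is the presence of a \emph{second} integrable singularity of $v$, now at $-1$. Since it is of exactly the same type as the one at $1$, no new estimate is required and the proof carries over essentially unchanged. The only slightly delicate point worth double-checking is the $O(|y|^{\alpha-1})$ decay of both $v_1$ and $v_{-1}$ at $\pm\infty$ in the case $\alpha<1$, which however follows directly from the closed-form expressions; the argument for $\alpha\ge 1$ needs only the recurrence property already employed in Lemma \ref{lemma_help_v1}.
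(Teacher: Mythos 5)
Your proposal is correct and is exactly the argument the paper intends: the paper's own proof of this lemma consists of the single sentence that it is analogous to Lemma \ref{lemma_help_v1}, and your write-up carries out that analogy faithfully, correctly identifying the only new feature (the second integrable singularity of $v$ at $-1$, of order $(|y|-1)^{\alpha\rho-1}$) and the decay/recurrence dichotomy for part (i).
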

\begin{proof}
The proof is analogous to the one of Lemma \ref{lemma_help_v1}.
\end{proof}

The proof of Proposition \ref{thm_abs_both} consists of combining these two statements as in the proof of Proposition \ref{thm_abs_above}.

\subsection{Conditioning and $h$-transform} \label{sec_proof_conditioning}
To connect the $h$-transform with the conditioned process we need some connection between the harmonic function and the asymptotic probability of the event we condition on. We have to separate the cases $\alpha<1$ and $\alpha \geq 1$.
\subsubsection{The case $\alpha<1$}
\begin{proposition} \label{prop_as_<1}
Let $\xi$ be an $\alpha$-stable process with $\alpha \in (0,1)$ and both sided jumps. Then it holds:
\begin{align}\label{eq_as1_<1}
\frac{\pi \Gamma(1-\alpha\rho)\Gamma(1-\alpha\hat\rho)}{2^{\alpha}\Gamma(1-\alpha)} v_1(x) &= \lim_{\eps \searrow 0} \frac{1}{\eps}\mP^x(\xi_{\underline{m}} \in (1,1+\eps)), \quad x \in \mR \setminus [-1,1],
\end{align}
and
\begin{align}\label{eq_as_<1}
\frac{\pi \Gamma(1-\alpha\rho)\Gamma(1-\alpha\hat\rho)}{2^{\alpha}\Gamma(1-\alpha)} v(x) &= \lim_{\eps \searrow 0} \frac{1}{\eps}\mP^x(|\xi_{\underline{m}}| \in (1,1+\eps)), \quad x \in \mR \setminus [-1,1].
\end{align}
\end{proposition}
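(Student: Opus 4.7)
The plan is to deduce both identities from the explicit density of the point of closest reach $\xi_{\underline{m}}$ for an $\alpha$-stable process with $\alpha \in (0,1)$, which is available in the literature (notably Kyprianou et al.~\cite{Kyp_Par_Wat_01}). I will denote this density by $f_x(y)$, so that $\mP^x(\xi_{\underline{m}} \in \dd y) = f_x(y)\,\dd y$ on $\mR \setminus \{0\}$. For $x \notin [-1,1]$ the map $y \mapsto f_x(y)$ is continuous in a neighbourhood of $y=1$ (and of $y=-1$), hence
\begin{align*}
\lim_{\eps \searrow 0} \frac{1}{\eps}\,\mP^x(\xi_{\underline{m}} \in (1,1+\eps)) = f_x(1+),
\end{align*}
and the proposition reduces to computing this one-sided value and matching it with $v_1(x)$ up to the stated constant.

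Concretely, I would record the two sub-cases of the density formula relevant here: one for $x>1$ and $y>1$ (same half-line) and one for $x<-1$ and $y>1$ (opposite half-lines). In both cases the explicit expression factorises into a prefactor proportional to $\sin(\pi\alpha\rho)\sin(\pi\alpha\hat\rho)\Gamma(1-\alpha)/\pi^2$ and an algebraic expression in $|x|\pm 1$ and $y\pm 1$. Since $(\alpha-1)_+=0$ for $\alpha<1$, the definition of $v_1$ collapses to
\begin{align*}
v_1(x) =
\begin{cases}
\sin(\pi\alpha\hat\rho)\,(x-1)^{\alpha\hat\rho-1}(x+1)^{\alpha\rho}, & x>1,\\
\sin(\pi\alpha\rho)\,(|x|-1)^{\alpha\rho}(|x|+1)^{\alpha\hat\rho-1}, & x<-1,
\end{cases}
\end{align*}
which is, up to a universal constant, the $x$-dependence that $f_x(1+)$ exhibits upon substituting $y=1$ in the two respective sub-cases. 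Applying the reflection identity $\Gamma(\alpha\rho)\Gamma(1-\alpha\rho)=\pi/\sin(\pi\alpha\rho)$ (and its $\hat\rho$-analogue) to eliminate $\Gamma(\alpha\rho)$ and $\Gamma(\alpha\hat\rho)$ in favour of $\Gamma(1-\alpha\rho)$ and $\Gamma(1-\alpha\hat\rho)$, and then absorbing the sine factors into $v_1$, should leave precisely the prefactor $\frac{2^\alpha\,\Gamma(1-\alpha)}{\pi\,\Gamma(1-\alpha\rho)\,\Gamma(1-\alpha\hat\rho)}$ claimed in \eqref{eq_as1_<1}.

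The second identity \eqref{eq_as_<1} then follows by the decomposition
$$
\mP^x(|\xi_{\underline{m}}| \in (1,1+\eps)) = \mP^x(\xi_{\underline{m}} \in (1,1+\eps)) + \mP^x(\xi_{\underline{m}} \in (-(1+\eps),-1)),
$$
applying \eqref{eq_as1_<1} to the first summand and running the same argument at $y=-1^-$ for the second. The density $f_x$ evaluated at $y=-1$ produces a constant multiple of $v_{-1}(x)$ by the spatial reflection that swaps $\rho \leftrightarrow \hat\rho$, and summing the two contributions yields $v(x)=v_1(x)+v_{-1}(x)$.

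The main technical obstacle is the bookkeeping of constants in identifying $f_x(1+)$ with the correct multiple of $v_1(x)$. The density formula in \cite{Kyp_Par_Wat_01} carries $\Gamma(\alpha\rho)$, $\Gamma(\alpha\hat\rho)$, $\Gamma(1-\alpha)$, various sines of multiples of $\pi\alpha$, and a power of $2$; the target constant in the statement mixes $\Gamma(1-\alpha\rho)$, $\Gamma(1-\alpha\hat\rho)$ and $\Gamma(1-\alpha)$ with $2^\alpha$. The rewriting is purely algebraic via the reflection formula, but some care is required to absorb every sine and power of two correctly. Once this is settled, both limits drop out directly.
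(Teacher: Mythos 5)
Your approach is correct and essentially the same as the paper's: the paper likewise takes the explicit law of $\xi_{\underline m}$, writes $\mP^x(\xi_{\underline m}\in(1,1+\eps))$ as an integral of that density over $(1,1+\eps)$, and extracts the limit by l'H\^opital (equivalently, your pointwise evaluation of the density at $1+$, which is legitimate since the density is continuous there for fixed $x\notin[-1,1]$), then converts constants via the reflection formula and handles $x<-1$ by duality, with \eqref{eq_as_<1} following by exactly your two-sided decomposition. One small correction: the explicit distribution of the point of closest reach is Proposition 1.1 of \cite{Kyp_Riv_Sen_01}, not \cite{Kyp_Par_Wat_01} (the latter is used only for the first-hitting distributions in the $\alpha\geq 1$ case).
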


\begin{proof}
The proof is based on Proposition 1.1 of \cite{Kyp_Riv_Sen_01} where we find an explicit expression for the distribution of $\xi_{\underline{m}}$. For $x>1$ this gives
\begin{align*}
\mP^{x}(\xi_{\underline{m}} \in (1,1+\eps)) &= \frac{2^{-\alpha}\Gamma(1-\alpha\rho)}{\Gamma(1-\alpha)\Gamma(\alpha\hat\rho)} \int\limits_1^{x \land (1+\eps)} z^{-\alpha} (x-z)^{\alpha\hat{\rho}-1}(x+z)^{\alpha\rho} \, \dd z \\
&= \frac{2^{-\alpha}\Gamma(1-\alpha\rho)}{\Gamma(1-\alpha)\Gamma(\alpha\hat\rho)} \int\limits_1^{x \land (1+\eps)} z^{-1} \big(\frac{x}{z}-1 \big)^{\alpha\hat{\rho}-1} \big(\frac{x}{z}+1\big)^{\alpha\rho} \, \dd z \\
&= \frac{2^{-\alpha}\Gamma(1-\alpha\rho)}{\Gamma(1-\alpha)\Gamma(\alpha\hat\rho)} \int\limits_{\frac{x}{x \land (1+\eps)}}^{x} \frac{x}{z^2} \frac{z}{x} (z-1)^{\alpha\hat{\rho}-1}(z+1)^{\alpha\rho} \, \dd z \\
&= \frac{2^{-\alpha}\Gamma(1-\alpha\rho)}{\Gamma(1-\alpha)\Gamma(\alpha\hat\rho)} \int\limits_{1 \lor \frac{x}{1+\eps}}^{x}(1+\frac{1}{z}) \psi_{\alpha\rho}(z) \, \dd z.
\end{align*}
Applying l'Hopital's rule to the first calculation we obtain:
\begin{align*}
\frac{\Gamma(1-\alpha)\Gamma(\alpha\hat\rho)}{2^{-\alpha}\Gamma(1-\alpha\rho)} \lim_{\eps \searrow 0}\frac{1}{\eps} \mP^x(\xi_{\underline{m}} \in (1,1+\eps)) &= \lim_{\eps \searrow 0}\frac{1}{\eps} \int\limits_{\frac{x}{1+\eps}}^{x}\big(1+\frac{1}{z}\big) \psi_{\alpha\rho}(z) \, \dd z \\
&= \lim_{\eps \searrow 0} \frac{x}{(1+\eps)^2} \big(1+\frac{1+\eps}{x}\big) \psi_{\alpha\rho}(\frac{x}{1+\eps})\\
&=(x+1) \psi_{\alpha\rho}(x) \\
&= \frac{1}{\sin(\pi\alpha\hat{\rho})} v_1(x).
\end{align*}
Since $\sin(\pi\alpha\hat{\rho}) = \pi /(\Gamma(\alpha\hat{\rho})\Gamma(1-\alpha\hat{\rho}))$ this shows \eqref{eq_as1_<1} for $x>1$. For $x<-1$ we first use duality to deduce:
\begin{align*}
\mP^{x}(\xi_{\underline{m}} \in (1,1+\eps)) &= \hat{\mP}^{-x}(\xi_{\underline{m}} \in (-1-\eps,-1))\\
&= \frac{2^{-\alpha}\Gamma(1-\alpha\hat\rho)}{\Gamma(1-\alpha)\Gamma(\alpha\rho)} \int\limits_{1 \lor \frac{-x}{1+\eps}}^{-x}\big(1-\frac{1}{z}\big) \psi_{\alpha\hat\rho}(z) \, \dd z,
\end{align*}
where the second equality is verified using a similar calculation as above. Hence, it follows, for $x<-1$, that
\begin{align*}
\frac{\Gamma(1-\alpha)\Gamma(\alpha\rho)}{2^{-\alpha}\Gamma(1-\alpha\hat\rho)} \lim_{\eps \searrow 0} \frac{1}{\eps} \mP^x(\xi_{\underline{m}} \in (1,1+\eps)) &= \lim_{\eps \searrow 0} \frac{1}{\eps} \int\limits_{\frac{-x}{1+\eps}}^{-x}(1-\frac{1}{z}) \psi_{\alpha\hat\rho}(z) \, \dd z\\
&=\lim_{\eps \searrow 0} \frac{-x}{(1+\eps)^2} \big(1-\frac{1+\eps}{-x}\big) \psi_{\alpha\hat\rho}\big(\frac{-x}{1+\eps}\big) \\
&= (-x-1)\psi_{\alpha\hat\rho}(-x)\\
&= \frac{1}{\sin(\pi\alpha\rho)} v_1(x).
\end{align*}
Again we use $\sin(\pi\alpha\rho) = \frac{\pi}{\Gamma(\alpha\rho)\Gamma(1-\alpha\rho)}$ to obtain \eqref{eq_as1_<1} for $x<-1$.\smallskip

Similarly, \eqref{eq_as_<1} can be deduced as follows. Analogously to the proof of the first equation we can show
\begin{align*}
\frac{\pi \Gamma(1-\alpha\rho)\Gamma(1-\alpha\hat\rho)}{2^{\alpha}\Gamma(1-\alpha)} v_{-1}(x) &= \lim_{\eps \searrow 0} \frac{1}{\eps}\mP^x(\xi_{\underline{m}} \in (-(1+\eps)),-1)), \quad x \in \mR \setminus [-1,1].
\end{align*}
Since we defined $v(x) = v_1(x) + v_{-1}(x)$ this shows \eqref{eq_as_<1}.
\end{proof}

Now we are ready to prove the connection between the $h$-transform and the conditioned process.
\begin{proof}[Proof of Theorems \ref{thm_cond_v1_<1} and \ref{thm_cond_v_<1}]
We start with $x>1$. First note for $\delta>\eps>0$: 
\begin{align*}
&\quad\mP^x(\Lambda, t< T_{(-(1+\delta),1+\delta)} ,\xi_{\underline m} \in (1,1+\eps)) \\
&=\mP^x(\Lambda, t< T_{(-(1+\delta),1+\delta)}, t < \underline m ,\xi_{\underline m} \in (1,1+\eps)).
\end{align*}
Now we denote the shift operator in the path space by $\theta_t: D \rightarrow D$, i.e. it holds $(\xi \circ \theta_t)_s = \xi_{s+t}$. With the tower property of the conditional expectation and the Markov property in the version including the shift-operator (see e.g. \cite{Chu_Wal_01} p. 8) it holds:
\begin{align*}
&\quad \mP^x(\Lambda, t< T_{(-(1+\delta),1+\delta)}, t < \underline m ,\xi_{\underline m} \in (1,1+\eps)) \\
&= \mE^x \Big[ \1_\Lambda  \mE^x \big[\1_{\{ t< T_{(-(1+\delta),1+\delta)} \}} \1_{\{ \xi_{\underline m} \in (1,1+\eps) \}} \,|\, \cF_t \big] \Big] \\
&= \mE^x \Big[ \1_\Lambda  \mE^x \big[\1_{\{ t< T_{(-(1+\delta),1+\delta)} \}}(\1_{\{ \xi_{\underline m} \in (1,1+\eps) \}} \circ \theta_t )\,|\, \cF_t \big] \Big] \\
&= \mE^x \Big[ \1_\Lambda \1_{\{ t< T_{(-(1+\delta),1+\delta)} \}} \mE^x \big[\1_{\{ \xi_{\underline m} \in (1,1+\eps) \}} \circ \theta_t \,|\, \cF_t \big] \Big] \\
&= \mE^x \Big[ \1_\Lambda  \1_{\{ t< T_{(-(1+\delta),1+\delta)} \}} \mP^{\xi_t}(\xi_{\underline m} \in (1,1+\eps)) \Big].
\end{align*}
Hence, we have 
\begin{align*}
&\quad\mP^x(\Lambda, t< T_{(-(1+\delta),1+\delta)} ,\xi_{\underline m} \in (1,1+\eps))\\
&= \mE^x \Big[ \1_\Lambda  \1_{\{ t< T_{(-(1+\delta),1+\delta)} \}} \mP^{\xi_t}(\xi_{\underline m} \in (1,1+\eps)) \Big], \quad |x|>\delta>\eps.
\end{align*}
With the help of this application of the Markov property we obtain
\begin{align*}
&\quad \mP^x(\Lambda, t< T_{(-(1+\delta),1+\delta)} \, |\,\xi_{\underline m} \in (1,1+\eps))\\
&= \frac{\mP^x(\Lambda, t< T_{(-(1+\delta),1+\delta)},\xi_{\underline m} \in (1,1+\eps))}{\mP^x(\xi_{\underline m} \in (1,1+\eps))} \\
&= \mE^x \Big[ \1_\Lambda \1_{\{ t< T_{(-(1+\delta),1+\delta)}\}} \frac{\mP^{\xi_t}(\xi_{\underline m} \in (1,1+\eps))}{\mP^x(\xi_{\underline m} \in (1,1+\eps))} \Big].
\end{align*}
Now we would like to replace the ratio inside the expectation by $v_1(\xi_t){/}v_1(x)$ with Proposition \ref{prop_as_<1} when $\eps$ tends to $0$. For that we need to argue why we can move the $\eps$-limit inside the integral. Without loss of generality we assume $|x|>1+\delta>1+\eps$. Note that for $y>1+\delta$ we have again with Proposition 1.1 of \cite{Kyp_Riv_Sen_01}:
\begin{align*}
\mP^{y}(|\xi_{\underline{m}}| \in (1,1+\eps)) &= 2 \frac{2^{-\alpha}\Gamma(1-\alpha\rho)}{\Gamma(1-\alpha)\Gamma(\alpha\hat\rho)} \int\limits_{\frac{y}{1+\eps}}^{y} \psi_{\alpha\rho}(z) \, \dd z \\
&\leq  \frac{2^{1-\alpha}\Gamma(1-\alpha\rho)}{\Gamma(1-\alpha)\Gamma(\alpha\hat\rho)} \Big(y-\frac{y}{1+\eps}\Big) \psi_{\alpha\rho}\Big(\frac{y}{1+\eps}\big)\\
&=  \frac{2^{1-\alpha}\Gamma(1-\alpha\rho)}{\Gamma(1-\alpha)\Gamma(\alpha\hat\rho)} \frac{y\eps}{1+\eps} \psi_{\alpha\rho}\Big(\frac{y}{1+\eps}\Big)\\
&=  \frac{2^{1-\alpha}\Gamma(1-\alpha\rho)}{\Gamma(1-\alpha)\Gamma(\alpha\hat\rho)} \frac{\eps}{2\sin(\pi\alpha\hat\rho)} v\Big(\frac{y}{1+\eps}\Big).
\end{align*}
Now let $\eps$ be so small that $\frac{1+\delta}{1+\eps}>1+\frac{\delta}{2}$ and define
$$C_\delta := \sup_{|u| > 1+\frac{\delta}{2}} v(u),$$
which is finite because of the properties of $v$. So we can estimate on the event $\lc t< T_{[-1,1+\delta]}, \xi_t \geq 1+\delta \rc$:
\begin{align*}
\frac{1}{\eps} \mP^{\xi_t}(\xi_{\underline m} \in (1,1+\eps)) &\leq \frac{2^{-\alpha}\Gamma(1-\alpha\rho)}{\Gamma(1-\alpha)\Gamma(\alpha\hat\rho)\sin(\pi\alpha\hat\rho)} v\big(\frac{\xi_t}{1+\eps}\big)\\
&\leq \frac{2^{-\alpha}\Gamma(1-\alpha\rho)}{\Gamma(1-\alpha)\Gamma(\alpha\hat\rho)\sin(\pi\alpha\hat\rho)} C_\delta.
\end{align*}
On $\lc t< T_{(-(1+\delta),1+\delta)}, \xi_t \leq -(1+\delta) \rc$ an analogous argumentation shows
\begin{align*}
\frac{1}{\eps}\mP^{\xi_t}(\xi_{\underline m} \in (1,1+\eps)) &\leq \frac{2^{-\alpha}\Gamma(1-\alpha\hat\rho)}{\Gamma(1-\alpha)\Gamma(\alpha\rho)\sin(\pi\alpha\rho)} C_\delta.
\end{align*}
So we can use dominated convergence as follows:
\begin{align*}
&\quad\lim_{\eps \searrow 0} \mP^x(\Lambda, t< T_{(-(1+\delta),1+\delta)} \, |\,\xi_{\underline m} \in (1,1+\eps)) \\
&= \lim_{\eps \searrow 0}\frac{\eps}{{\mP^x(\xi_{\underline m} \in (1,1+\eps))}}  \lim_{\eps \searrow 0} \mE^x \Big[ \1_\Lambda \1_{\{ t< T_{(-(1+\delta),1+\delta)}\}}  \frac{\mP^{\xi_t}(\xi_{\underline m} \in (1,1+\eps))}{\eps} \Big] \\
&= \lim_{\eps \searrow 0}\frac{\eps}{{\mP^x(\xi_{\underline m} \in (1,1+\eps))}}  \mE^x \Big[ \1_\Lambda \1_{\{ t< T_{(-(1+\delta),1+\delta)}\}} \lim_{\eps \searrow 0} \frac{\mP^{\xi_t}(\xi_{\underline m} \in (1,1+\eps))}{\eps} \Big] \\ 
&= \mE^x \Big[ \1_\Lambda \1_{\{ t< T_{(-(1+\delta),1+\delta)}\}} \frac{v_1(\xi_t)}{v_1(x)} \Big]. \\
&= \mP_{v_1}^x(\Lambda, t< T_{(-(1+\delta),1+\delta)}).
\end{align*}
In the last step we used Proposition \ref{prop_as_<1}. This proves Theorem \ref{thm_cond_v1_<1}.\smallskip

The proof of Theorem \ref{thm_cond_v_<1} is similar. Applying the Markov property in the shift-operator-version we get 
\begin{align*}
&\quad\mP^x(\Lambda, t< T_{(-(1+\delta),1+\delta)} \, |\,|\xi_{\underline m}| \in (1,1+\eps))\\
&= \frac{\mP^x(\Lambda, t< T_{(-(1+\delta),1+\delta)},|\xi_{\underline m}| \in (1,1+\eps))}{\mP^x(|\xi_{\underline m}| \in (1,1+\eps))} \\
&= \mE^x \Big[ \1_\Lambda \1_{\{ t< T_{(-(1+\delta),1+\delta)}\}} \frac{\mP^{\xi_t}(|\xi_{\underline m}| \in (1,1+\eps))}{\mP^x(|\xi_{\underline m}| \in (1,1+\eps))} \Big].
\end{align*}
In the proof of Theorem \ref{thm_cond_v1_<1} we already found an integrable dominating function for $\mP^{\xi_t}(|\xi_{\underline m}| \in (1,1+\eps)) /\eps$. So we can use dominated convergence as follows:
\begin{align*}
&\quad \lim_{\eps \searrow 0} \mP^x(\Lambda, t< T_{(-(1+\delta),1+\delta)} \, |\,|\xi_{\underline m}| \in (1,1+\eps)) \\
&= \lim_{\eps \searrow 0}\frac{\eps}{{\mP^x(|\xi_{\underline m}| \in (1,1+\eps))}}  \lim_{\eps \searrow 0} \mE^x \Big[ \1_\Lambda \1_{\{ t< T_{(-(1+\delta),1+\delta)}\}}  \frac{\mP^{\xi_t}(|\xi_{\underline m}| \in (1,1+\eps))}{\eps} \Big] \\
&= \lim_{\eps \searrow 0}\frac{\eps}{{\mP^x(|\xi_{\underline m}| \in (1,1+\eps))}}  \mE^x \Big[ \1_\Lambda \1_{\{ t< T_{(-(1+\delta),1+\delta)}\}} \lim_{\eps \searrow 0} \frac{\mP^{\xi_t}(|\xi_{\underline m}| \in (1,1+\eps))}{\eps} \Big] \\ 
&= \mE^x \Big[ \1_\Lambda \1_{\{ t< T_{(-(1+\delta),1+\delta)}\}} \frac{v(\xi_t)}{v(x)} \Big]. \\
&= \mP_{v}^x(\Lambda, t< T_{(-(1+\delta),1+\delta)}),
\end{align*}
where we used Proposition \ref{prop_as_<1} in the last equality.
\end{proof}

\subsubsection{The case $\alpha \geq 1$}
The strategy for $\alpha \geq 1$ is as in the case $\alpha <1$. First we need a relation between $v_1$ and the asymptotic probability we want to condition on. This event looks a bit different from the one in the case $\alpha<1$.
\begin{proposition} \label{prop_as_geq1}
Let $\xi$ be an $\alpha$-stable process with $\alpha \in [1,2)$ and both sided jumps, then
\begin{align*}
\frac{1-\alpha\hat\rho}{2^{\alpha\rho} \pi} v_1(x) &= \lim_{\eps \searrow 0} \frac{1}{\eps^{1-\alpha\hat\rho}}\mP^x(\xi_{T_{(-(1+\eps),1+\eps)}} \in (1,1+\eps)), \quad x \in \mR \setminus [-1,1].
\end{align*}
\end{proposition}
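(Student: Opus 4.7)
The plan is to mirror the strategy of Proposition \ref{prop_as_<1}, this time replacing the point of closest reach by the first entry point of $\xi$ in $(-(1+\eps),1+\eps)$. The principal analytic input should be Theorem 1.1 of Kyprianou--Pardo--Watson \cite{Kyp_Par_Wat_01}, which gives an explicit formula for the density of $\xi_{T_{(-b,b)}}$ under $\mP^x$ for $|x|>b$; this same formula has already been invoked in the proof of Theorem \ref{thm_harm}. Since $\alpha\geq 1$, the hitting time is finite almost surely, so no restriction to the event of finite entry is needed. I would first use the scaling property \eqref{scaling} to reduce to the unit interval, writing
\begin{align*}
\mP^x\bigl(\xi_{T_{(-(1+\eps),1+\eps)}}\in(1,1+\eps)\bigr) = \mP^{x/(1+\eps)}\bigl(\xi_{T_{(-1,1)}}\in(1/(1+\eps),1)\bigr)
\end{align*}
and then express the right-hand side as the integral of the entry density $q_1(x/(1+\eps),\cdot)$ over an interval whose length is of order~$\eps$.

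The next step is the asymptotic analysis of this integral. After the change of variables $y=1-u$, the integration window becomes $(0,\eps/(1+\eps))$ and the singularity of $q_1$ near $y=1$ (of order $(1-y)^{-\alpha\hat\rho}$) produces exactly the scale $\eps^{1-\alpha\hat\rho}$ appearing in the normalisation. Dividing by $\eps^{1-\alpha\hat\rho}$ and taking $\eps\searrow 0$ (using L'Hôpital's rule in the exact style of the proof of Proposition \ref{prop_as_<1}), the singular contribution reproduces, via the identity $\sin(\pi\alpha\hat\rho)=\pi/(\Gamma(\alpha\hat\rho)\Gamma(1-\alpha\hat\rho))$, the term $\sin(\pi\alpha\hat\rho)(x+1)\psi_{\alpha\rho}(x)$ inside $v_1(x)$. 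For $\alpha\in(1,2)$ the entry density of the interval carries an additional, non-singular piece reflecting that the stable process hits single points in the recurrent regime (trajectories that visit~$1$ before jumping into $(1,1+\eps)$); this correction is precisely what yields, in the limit, the term $-(\alpha-1)_+\int_1^x\psi_{\alpha\rho}(u)\,\dd u$ present in the definition of $v_1$. Assembling both pieces should give the stated constant $\frac{1-\alpha\hat\rho}{2^{\alpha\rho}\pi}$.

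The case $x<-1$ would be handled by the duality trick already used in Proposition \ref{prop_as_<1}: the dual process interchanges $\rho$ and $\hat\rho$, and the same computation then produces the correct value of $v_1(x)$ on the negative half-line. As ever, interchange of limit and integral needs a domination argument; here one can use the bound $q_1(x/(1+\eps),1-u)\leq C(x)u^{-\alpha\hat\rho}$ uniformly for small $\eps$, which reduces the passage to the limit to a routine application of dominated convergence, exactly as in the proof of Theorem \ref{thm_cond_v1_<1}.

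The main obstacle I expect is the bookkeeping for $\alpha\in(1,2)$: one has to track both the singular and smooth parts of the entry density simultaneously and verify that, after the L'Hôpital step, they assemble with the correct signs and constants into the two-term expression defining $v_1(x)$. The rest (the scaling reduction, the change of variables, the application of L'Hôpital, and the passage to the limit under the integral) is essentially a transcription of the argument already carried out in the $\alpha<1$ case.
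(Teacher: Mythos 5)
Your proposal follows essentially the same route as the paper's proof: scaling to the unit interval, the explicit first-entry law from Theorem 1.1 of Kyprianou--Pardo--Watson (including the $(\alpha-1)$ correction term that produces $-(\alpha-1)_+\int_1^x\psi_{\alpha\rho}(u)\,\dd u$), L'H\^opital's rule to extract the scale $\eps^{1-\alpha\hat\rho}$, and duality for $x<-1$. The only cosmetic difference is that the paper needs no dominated-convergence step here (the limit is handled directly by L'H\^opital and Leibniz's rule); that argument only enters later, in the proof of Theorem \ref{thm_cond_v1_geq1}.
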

\begin{proof}
Using the scaling property and Theorem 1.1 of \cite{Kyp_Par_Wat_01} we get for $x>1+\eps$:
\begin{align}\label{help_111}
\begin{split}
&\quad\frac{\pi}{\sin(\pi\alpha\hat\rho)} \mP^x( \xi_{T_{(-(1+\eps),1+\eps)}} \in (1,1+\eps))\\
&= \frac{\pi}{\sin(\pi\alpha\hat\rho)} \mP^{\frac{x}{1+\eps}}\Big( \xi_{T_{(-1,1)}} \in \big(\frac{1}{1+\eps},1\big)\Big)\\
&= \big(\frac{x}{1+\eps}+1\big)^{\alpha\rho} \big(\frac{x}{1+\eps}-1\big)^{\alpha\hat\rho} \int\limits_{\frac{1}{1+\eps}}^1 (1+y)^{-\alpha\rho} (1-y)^{-\alpha\hat\rho} \big( \frac{x}{1+\eps}-y \big)^{-1} \, \dd y \\
& \quad  -(\alpha-1) \int\limits_1^{\frac{x}{1+\eps}} \psi_{\alpha\rho}(u)\, \dd u \int\limits_{\frac{1}{1+\eps}}^1 (1+y)^{-\alpha\rho} (1-y)^{-\alpha\hat\rho} \, \dd y.
\end{split}
\end{align}
With l'Hopital's rule and the integration rule of Leibnitz we see:
\begin{align}\label{help_112}
\begin{split}
&\quad\lim_{\eps \searrow 0} \frac{1}{\eps^{1-\alpha\hat\rho}} \int\limits_{\frac{1}{1+\eps}}^1 (1+y)^{-\alpha\rho} (1-y)^{-\alpha\hat\rho} \big( \frac{x}{1+\eps}-y \big)^{-1} \, \dd y \\
&=\lim_{\eps \searrow 0} \frac{\eps^{\alpha\hat\rho}}{1-\alpha\hat\rho} \frac{1}{(1+\eps)^2} \big(1+\frac{1}{1+\eps}\big)^{-\alpha\rho} \big(1-\frac{1}{1+\eps}\big)^{-\alpha\hat\rho} \big(\frac{x-1}{1+\eps}\big)^{-1} \\
&\quad \quad  + \lim_{\eps \searrow 0}  \frac{\eps^{\alpha\hat\rho}}{1-\alpha\hat\rho} \int\limits_{\frac{1}{1+\eps}}^1 (1+y)^{-\alpha\rho} (1-y)^{-\alpha\hat\rho}\frac{x}{(x-y(1+\eps))^2} \, \dd y \\
&= \frac{2^{-\alpha\rho}}{1-\alpha\hat\rho} (x-1)^{-1}
\end{split}
\end{align}
and further, 
\begin{align}\label{help_113}
\begin{split}
&\quad\lim_{\eps \searrow 0} \frac{1}{\eps^{1-\alpha\hat\rho}} \int\limits_{\frac{1}{1+\eps}}^1 (1+y)^{-\alpha\rho} (1-y)^{-\alpha\hat\rho} \, \dd y\\
&= \lim_{\eps \searrow 0} \frac{\eps^{\alpha\hat\rho}}{1-\alpha\hat\rho} \frac{1}{(1+\eps)^2} \big(1+\frac{1}{1+\eps}\big)^{-\alpha\rho} \big(1-\frac{1}{1+\eps}\big)^{-\alpha\hat\rho}\\
&= \frac{2^{-\alpha\rho}}{1-\alpha\hat\rho}.
\end{split}
\end{align}
Now we plug in \eqref{help_112} and \eqref{help_113} in \eqref{help_111} and get
\begin{align*}
&\quad\lim_{\eps \searrow 0} \frac{\pi}{\eps^{1-\alpha\hat\rho}} \mP^x( \xi_{T_{(-(1+\eps),1+\eps)}} \in (1,1+\eps))\\
&=\frac{2^{-\alpha\rho}}{1-\alpha\hat\rho} \sin(\pi\alpha\hat\rho)\Big[(x+1)^{\alpha\rho} (x-1)^{\alpha\hat\rho} (x-1)^{-1} - (\alpha-1) \int\limits_1^{x} \psi_{\alpha\rho}(u)\, \dd u \Big] \\
&= \frac{2^{-\alpha\rho}}{1-\alpha\hat\rho} v_1(x).
\end{align*}
For $x<-1$ we note that
$$\mP^x( \xi_{T_{(-(1+\eps),1+\eps)}} \in (1,1+\eps)) = \hat{\mP}^{|x|}( \xi_{T_{(-(1+\eps),1+\eps)}} \in (-(1+\eps),-1)),$$
use again Theorem 1.1 of \cite{Kyp_Par_Wat_01} and do a similar calculation as above to deduce 
\begin{align*}
&\quad \frac{\pi}{\sin(\pi\alpha\rho)} \mP^x( \xi_{T_{(-(1+\eps),1+\eps)}} \in (1,1+\eps))\\
&= \big(\frac{|x|}{1+\eps}+1\big)^{\alpha\hat\rho} \big(\frac{|x|}{1+\eps}-1\big)^{\alpha\rho} \int\limits_{-1}^{-\frac{1}{1+\eps}} (1+y)^{-\alpha\hat\rho} (1-y)^{-\alpha\rho} \big( \frac{|x|}{1+\eps}-y \big)^{-1} \, \dd y \\
& \quad  -(\alpha-1) \int\limits_1^{\frac{|x|}{1+\eps}} \psi_{\alpha\hat\rho}(u)\, \dd u \int\limits_{-1}^{-\frac{1}{1+\eps}} (1+y)^{-\alpha\hat\rho} (1-y)^{-\alpha\rho} \, \dd y.
\end{align*}
A substitution on the integrals and the same limiting arguments as in the case $x>1$ show
\begin{align*}
&\quad \lim_{\eps \searrow 0} \frac{\pi}{\eps^{1-\alpha\hat\rho}} \mP^x( \xi_{T_{(-(1+\eps),1+\eps)}} \in (1,1+\eps))\\
&=\frac{2^{-\alpha\rho}}{1-\alpha\hat\rho} \sin(\pi\alpha\rho)\Big[(|x|+1)^{\alpha\hat\rho} (|x|-1)^{\alpha\rho} (|x|+1)^{-1} - (\alpha-1) \int\limits_1^{|x|} \psi_{\alpha\hat\rho}(u)\, \dd u \Big] \\
&= \frac{2^{-\alpha\rho}}{1-\alpha\hat\rho} v_1(x).
\end{align*}
\end{proof}

\begin{proof}[Proof of Theorems \ref{thm_cond_v1_geq1} and \ref{thm_cond_v_geq1}]
First we note by a similar application of the Markov property as in the proof of Theorem \ref{thm_cond_v1_<1}:
\begin{align*}
&\quad \mP^x(\Lambda, t< T_{(-(1+\delta),1+\delta)} ,\xi_{T_{(-(1+\eps),1+\eps)}} \in (1,1+\eps)) \\
&= \mE^x \Big[ \1_\Lambda \1_{\{ t< T_{(-(1+\delta),1+\delta)} \}} \mP^{\xi_t}(\xi_{T_{(-(1+\eps),1+\eps)}} \in (1,1+\eps)) \Big]
\end{align*}
and hence,
\begin{align*}
&\quad \mP^x(\Lambda, t< T_{(-(1+\delta),1+\delta)} \,|\, \xi_{T_{(-(1+\eps),1+\eps)}} \in (1,1+\eps)) \\
&= \mE^x \Big[ \1_\Lambda \1_{\{ t< T_{(-(1+\delta),1+\delta)} \}} \frac{\mP^{\xi_t}(\xi_{T_{(-(1+\eps),1+\eps)}} \in (1,1+\eps))}{\mP^{x}(\xi_{T_{(-(1+\eps),1+\eps)}} \in (1,1+\eps))} \Big].
\end{align*}
Again we want to move the $\eps$-limit inside the integral and use Proposition \ref{prop_as_geq1}. First we use \eqref{help_111}:
\begin{align*}
&\quad\frac{\pi}{\sin(\pi\alpha\hat\rho)} \mP^y( \xi_{T_{(-(1+\eps),1+\eps)}} \in (1,1+\eps))\\
&= \big(\frac{y}{1+\eps}+1\big)^{\alpha\rho} \big(\frac{y}{1+\eps}-1\big)^{\alpha\hat\rho} \int\limits_{\frac{1}{1+\eps}}^1 (1+u)^{-\alpha\rho} (1-u)^{-\alpha\hat\rho} \big( \frac{y}{1+\eps}-u \big)^{-1} \, \dd u \\
& \quad  -(\alpha-1) \int\limits_1^{\frac{y}{1+\eps}} \psi_{\alpha\rho}(w)\, \dd w \int\limits_{\frac{1}{1+\eps}}^1 (1+u)^{-\alpha\rho} (1-u)^{-\alpha\hat\rho} \, \dd u \\
&\leq  \Big[ \big(\frac{y}{1+\eps}+1\big)^{\alpha\rho} \big(\frac{y}{1+\eps}-1\big)^{\alpha\hat\rho-1} - (\alpha-1) \int\limits_1^{\frac{y}{1+\eps}} \psi_{\alpha\rho}(w)\, \dd w \Big] \int\limits_{\frac{1}{1+\eps}}^1 (1+u)^{-\alpha\rho} (1-u)^{-\alpha\hat\rho} \, \dd u\\
&= \frac{1}{\sin(\pi\alpha\hat\rho)}v_1\big(\frac{y}{1+\eps}\big) \int\limits_{\frac{1}{1+\eps}}^1 (1+u)^{-\alpha\rho} (1-u)^{-\alpha\hat\rho} \, \dd u.
\end{align*}
Further,
\begin{align*}
\eps^{\alpha\hat\rho-1} \int\limits_{\frac{1}{1+\eps}}^1 (1+u)^{-\alpha\rho} (1-u)^{-\alpha\hat\rho} \, \dd u &\leq \eps^{\alpha\hat\rho-1}  \int\limits_{\frac{1}{1+\eps}}^1(1-u)^{-\alpha\hat\rho} \, \dd u \\
&= \frac{\eps^{\alpha\hat\rho-1}}{1-\alpha\hat\rho} \Big(\frac{\eps}{1+\eps}\Big)^{1-\alpha\hat\rho} \\
&\leq \frac{1}{1-\alpha\hat\rho}.
\end{align*}
Let be $\eps$ so small that $\frac{1+\delta}{1+\eps} > 1+\frac{\delta}{2}$ and define
$$C_\delta = \sup_{|u| \geq 1+\frac{\delta}{2}} v_1(u).$$
Then it follows
\begin{align*}
\frac{\pi}{\eps^{1-\alpha\hat\rho}} \mP^y( \xi_{T_{(-(1+\eps),1+\eps)}} \in (1,1+\eps)) &\leq \frac{1}{1-\alpha\hat\rho} v_1\Big(\frac{y}{1+\eps}\Big)
\leq \frac{C_\delta}{1-\alpha\hat\rho}.
\end{align*}
Similarly, we get for $y<-(1+\eps)$:
\begin{align*}
\frac{\pi}{\eps^{1-\alpha\hat\rho}} \mP^y( \xi_{T_{(-(1+\eps),1+\eps)}} \in (1,1+\eps)) &\leq \frac{1}{1-\alpha\hat\rho} v_1\Big(\frac{y}{1+\eps}\Big)\\
&\leq \frac{C_\delta}{1-\alpha\hat\rho}.
\end{align*}
So we can apply dominated convergence to deduce
\begin{align*}
&\quad\lim_{\eps \searrow 0} \mP^x(\Lambda, t< T_{(-(1+\delta),1+\delta)} \, |\xi_{T_{(-(1+\eps),1+\eps)}} \in (1,1+\eps)) \\
&= \lim_{\eps \searrow 0}\frac{\eps^{1-\alpha\hat\rho}}{{\mP^x(\xi_{T_{(-(1+\eps),1+\eps)}} \in (1,1+\eps))}} \\
&\quad   \times \lim_{\eps \searrow 0} \mE^x \Big[ \1_\Lambda \1_{\{ t< T_{(-(1+\delta),1+\delta)}\}}  \frac{\mP^{\xi_t}(\xi_{T_{(-(1+\eps),1+\eps)}} \in (1,1+\eps))}{\eps^{1-\alpha\hat\rho}} \Big] \\
&= \lim_{\eps \searrow 0}\frac{\eps^{1-\alpha\hat\rho}}{{\mP^x(\xi_{T_{(-(1+\eps),1+\eps)}} \in (1,1+\eps))}} \\
&\quad   \times \mE^x \Big[ \1_\Lambda \1_{\{ t< T_{(-(1+\delta),1+\delta)}\}} \lim_{\eps \searrow 0} \frac{\mP^{\xi_t}(\xi_{T_{(-(1+\eps),1+\eps)}} \in (1,1+\eps))}{\eps^{1-\alpha\hat\rho}} \Big] \\ 
&= \mE^x \Big[ \1_\Lambda \1_{\{ t< T_{(-(1+\delta),1+\delta)}\}} \frac{v_1(\xi_t)}{v_1(x)} \Big] \\
&= \mP_{v_1}^x(\Lambda, t< T_{(-(1+\delta),1+\delta)}),
\end{align*}
where we used Proposition \ref{prop_as_geq1} in the second last equality. This finishes the proof of Theorem \ref{thm_cond_v1_geq1}.\smallskip

 To prove Theorem \ref{thm_cond_v_geq1} we first not that one can show analogously to the proof of Proposition \ref{prop_as_geq1}:
$$\lim_{\eps \searrow 0} \eps^{\alpha\rho-1}\mP^{x}(\xi_{T_{(-(1+\eps),1+\eps)}} \in (-(1+\eps),-1)) = \frac{1-\alpha\rho}{2^{\alpha\hat\rho} \pi}v_{-1}(x), \quad x \notin [-1,1].$$
We assume without loss of generality $\rho \leq \hat\rho$ (i.e. $\rho \leq 1/2$) and in particular it holds that
\begin{align*}
&\quad \lim_{\eps \searrow 0} \eps^{\alpha\hat\rho-1}\mP^{x}(\xi_{T_{(-(1+\eps),1+\eps)}} \in (-(1+\eps),-1))\\
&= \lim_{\eps \searrow 0} \eps^{\alpha(\hat\rho-\rho)} \eps^{\alpha\rho-1}\mP^{x}(\xi_{T_{(-(1+\eps),1+\eps)}} \in (-(1+\eps),-1))\\
&= 0.
\end{align*}
It follows that
\begin{align*}
&\quad\lim_{\eps \searrow 0} \frac{\mP^{\xi_t}(|\xi_{T_{(-(1+\eps),1+\eps)}}| \in (1,1+\eps))} {\mP^{x}(|\xi_{T_{(-(1+\eps),1+\eps)}}| \in (1,1+\eps))} \\
&=\lim\limits_{\eps \searrow 0} \frac{ \mP^{\xi_t}(\xi_{T_{(-(1+\eps),1+\eps)}} \in (1,1+\eps))+ \mP^{\xi_t}(\xi_{T_{(-(1+\eps),1+\eps)}} \in (-(1+\eps),-1))}{\mP^{x}(\xi_{T_{(-(1+\eps),1+\eps)}} \in (1,1+\eps))+\mP^{x}(\xi_{T_{(-(1+\eps),1+\eps)}} \in (-(1+\eps),-1))} \\
&=\lim\limits_{\eps \searrow 0} \frac{\eps^{\alpha\hat\rho-1}\mP^{\xi_t}(\xi_{T_{(-(1+\eps),1+\eps)}} \in (1,1+\eps))+ \eps^{\alpha\hat\rho-1} \mP^{\xi_t}(\xi_{T_{(-(1+\eps),1+\eps)}} \in (-(1+\eps),-1))}{\eps^{\alpha\hat\rho-1}\mP^{x}(\xi_{T_{(-(1+\eps),1+\eps)}} \in (1,1+\eps))+\eps^{\alpha\hat\rho-1}\mP^{x}(\xi_{T_{(-(1+\eps),1+\eps)}} \in (-(1+\eps),-1))}\\
&=\lim\limits_{\eps \searrow 0} \frac{\eps^{\alpha\hat\rho-1}\mP^{\xi_t}(\xi_{T_{(-(1+\eps),1+\eps)}} \in (1,1+\eps))}{\eps^{\alpha\hat\rho-1}\mP^{x}(\xi_{T_{(-(1+\eps),1+\eps)}} \in (1,1+\eps))}\\
&= \frac{v_1(\xi_t)}{v_1(x)}.
\end{align*}
For $\rho>1/2$, following the same argument, the first summands vanish instead of the second. To finish the proof of Theorem \ref{thm_cond_v_geq1} the dominated convergence argument can be transferred from the proof of Theorem \ref{thm_cond_v1_geq1}.
\end{proof}

\subsubsection{The alternative characterisation for $\alpha>1$}
As before we start with the needed asymptotic probability of the event we want to condition on which is, as already mentioned, the same as in the case $\alpha<1$ but under the law of the process conditioned to avoid $0$.
\begin{proposition}\label{prop_as_altern_>1}
Let $\xi$ be an $\alpha$-stable process with $\alpha \in (1,2)$ and both sided jumps, then
\begin{align}\label{eq_as1_>1}
\frac{\alpha-1}{2} v_1(x) &= \lim\limits_{\eps \searrow 0} \frac{e(x)}{\eps}\mP_\circ^{x}(\xi_{\underline{m}} \in (1,1+\eps)), \quad x \notin [-1,1],
\end{align}
and
\begin{align}\label{eq_as_>1}
\frac{\alpha-1}{2} v(x) &= \lim\limits_{\eps \searrow 0} \frac{e(x)}{\eps}\mP_\circ^{x}(|\xi_{\underline{m}}| \in (1,1+\eps)), \quad x \notin [-1,1].
\end{align}
\end{proposition}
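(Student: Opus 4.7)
The plan is to mirror the proof of Proposition~\ref{prop_as_<1}, now working under the conditioned measure $\mP_\circ^x$. The main technical input is an explicit expression for the density of $\xi_{\underline m}$ under $\mP_\circ^x$ for $\alpha\in(1,2)$. Note that Proposition~1.1 of \cite{Kyp_Riv_Sen_01} cannot be invoked directly, since under $\mP^x$ for $\alpha>1$ one has $\xi_{\underline m}=0$ almost surely. However, $\mP_\circ^x$ is the $h$-transform of $\mP^x$ with the invariant function $e$, and on the event $\{\xi_{\underline m}\in(1,1+\eps)\}$ the process is forced to avoid $0$. Combining the $h$-transform with the self-similarity of the avoid-$0$ process (a self-similar Markov process via Lamperti--Kiu) should yield a density of the form
$$\mP_\circ^x(\xi_{\underline m}\in \dd z)=\frac{e(z)}{e(x)}\,\kappa(x,z)\,\dd z, \qquad z\in\mR\setminus[-1,1],$$
where $\kappa(x,z)$ is computable from the underlying MAP, in line with the closing remark of Section~\ref{sec_proof_harmonic}.

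With such a formula in hand, I would write
$$\mP_\circ^x(\xi_{\underline m}\in(1,1+\eps))=\int_{1}^{1+\eps}\frac{e(z)}{e(x)}\,\kappa(x,z)\,\dd z$$
and apply l'H\^opital's rule at $z=1$ to get
$$\lim_{\eps\searrow 0}\frac{e(x)}{\eps}\mP_\circ^x(\xi_{\underline m}\in(1,1+\eps))=e(1+)\kappa(x,1+).$$
This is structurally identical to the proof of Proposition~\ref{prop_as_<1}: for $x>1$ the expression $(x+1)\psi_{\alpha\rho}(x)-(\alpha-1)\int_1^x\psi_{\alpha\rho}(u)\,\dd u$ should emerge, and for $x<-1$ one invokes duality, i.e.\ $\mP_\circ^x(\xi_{\underline m}\in(1,1+\eps))=\hat{\mP}_\circ^{-x}(\xi_{\underline m}\in(-(1+\eps),-1))$ with $\rho\leftrightarrow\hat\rho$, to obtain the mirrored form. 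Tracking the constants should produce the prefactor $(\alpha-1)/2$ and identify the limit with $v_1(x)$, proving \eqref{eq_as1_>1}. For \eqref{eq_as_>1}, I would decompose
$$\{|\xi_{\underline m}|\in(1,1+\eps)\}=\{\xi_{\underline m}\in(1,1+\eps)\}\cup\{\xi_{\underline m}\in(-(1+\eps),-1)\}$$
and sum the two contributions; the second piece produces $v_{-1}(x)$ by the same argument with $\rho$ and $\hat\rho$ interchanged, yielding $v(x)=v_1(x)+v_{-1}(x)$ as required.

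The principal obstacle will be the first step: obtaining a workable form of the density of $\xi_{\underline m}$ under $\mP_\circ^x$ in the case $\alpha>1$. This appears to require either the deep factorisation of the Lamperti--Kiu MAP associated with the avoid-$0$ process (cf.\ \cite{Kyp_Riv_Sen_01}) or a careful time-reversal/$h$-transform argument that takes the $\alpha<1$ formula as its starting point and extends it via the invariant function $e$. The subsequent asymptotic analysis, by contrast, is a routine l'H\^opital computation essentially identical to the one carried out in the proof of Proposition~\ref{prop_as_<1}, so once the density is in place, the rest should be straightforward bookkeeping of constants.
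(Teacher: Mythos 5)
There is a genuine gap, and you have in fact flagged it yourself: the entire content of the proposition is the explicit asymptotic law of $\xi_{\underline m}$ under $\mP_\circ^x$, and your proposal postulates a density of the form $\tfrac{e(z)}{e(x)}\kappa(x,z)\,\dd z$ with $\kappa$ ``computable from the underlying MAP'' without ever computing it. Once that density is conceded, the l'H\^opital step, the duality for $x<-1$, and the decomposition of $\{|\xi_{\underline m}|\in(1,1+\eps)\}$ into the two one-sided events are exactly the paper's bookkeeping; but the proposal never closes the step that makes the proposition non-trivial. Note also that the heuristic behind your ansatz is delicate: $\underline m$ is not a stopping time, so the $h$-transform identity \eqref{eq_htrafo_stoppingtime} does not directly convert $\mP_\circ^x(\xi_{\underline m}\in\dd z)$ into $\tfrac{e(z)}{e(x)}$ times an unconditioned quantity, and ``deep factorisation of the Lamperti--Kiu MAP of the avoid-$0$ process'' is a research programme, not a citation.

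The paper's resolution is a specific and different device: the Riesz--Bogdan--\.Zak transform. The process conditioned to avoid $0$ is, up to a time change, the spatial inversion $x\mapsto 1/x$ of the \emph{dual} stable process, and since the time change is irrelevant for the value of the extremal point, the point of closest reach of $(\xi,\mP_\circ^x)$ becomes the point of \emph{furthest} reach before hitting $0$ of the dual process started at $1/x$:
\begin{align*}
\mP_\circ^{x}\big(\xi_{\underline{m}} \in (1,1+\eps)\big) = \hat{\mP}^{1/x}\Big(\xi_{\overline{m}} \in \big(\tfrac{1}{1+\eps},1\big)\Big).
\end{align*}
The law of $\xi_{\overline m}$ is explicitly known (Proposition~1.2 of \cite{Kyp_Riv_Sen_01}), and after the substitution $u\mapsto x/u$ the resulting integrand is recognised as $u^{-\alpha}v_1(u)/\sin(\pi\alpha\hat\rho)$, whence l'H\^opital gives the factor $x^{1-\alpha}=\sin(\pi\alpha\hat\rho)/e(x)$ and the constant $(\alpha-1)/2$. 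Without this inversion step (or an equivalent derivation of the density of $\xi_{\underline m}$ under $\mP_\circ^x$), your argument does not go through.
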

\begin{proof}
We use the so-called point of furthest reach before hitting $0$. Let $\overline{m}$ be the time such that $|\xi_t| \leq |\xi_{\overline{m}}|$ for all $t \leq T_0$. The Riesz-Bogdan-Żak (see Bogdan and Żak \cite{Bog_Zak_01} for symmetric stable processes, Kyprianou \cite{Kyp_02} for general stable processes and Alili et al. \cite{Ali_Cha_Gra_Zak_01} for self-similar Markov processes) tells us that the process conditioned to avoid $0$ is the spatial inverse of the original (i.e. not $h$-transformed) dual process including a certain time-change. Since the time change does not play any role for the value $\xi_{\overline{m}}$ we can extract the distribution of the point of closest reach of the process conditioned to avoid $0$ from the distribution of the point of furthest reach of the original dual process, i.e.
$$\mP_\circ^{x}(\xi_{\underline{m}} \in (1,1+\eps)) = \hat{\mP}^{\frac{1}{x}}\Big(\xi_{\overline{m}} \in \Big(\frac{1}{1+\eps},1 \Big)\Big).$$
Combining this with Proposition 1.2 of Kyprianou et al. \cite{Kyp_Riv_Sen_01} where one can find an explicit expression for the distribution of the point of furthest reach before hitting $0$, we get for $x>1$:
\begin{align*}
&\,\frac{2}{\alpha-1}\mP_\circ^{x}(\xi_{\underline{m}} \in (1,1+\eps))\\
=&\,\int\limits_{\frac{1}{x} \lor \frac{1}{1+\eps}}^1 u^{-\alpha} \Big[\big(u+\frac{1}{x}\big)^{\alpha\rho}\big(u-\frac{1}{x}\big)^{\alpha\hat{\rho}-1} -(\alpha-1) x^{1-\alpha} \int\limits_1^{ux} \psi_{\alpha\rho}(w) \, \dd w \Big] \, \dd u \\
=&\, \int\limits_{1 \lor \frac{x}{1+\eps}}^x \frac{1}{x} \left(\frac{x}{u}\right)^{\alpha} \Big[\big(\frac{u}{x}+\frac{1}{x}\big)^{\alpha\rho}\big(\frac{u}{x}-\frac{1}{x}\big)^{\alpha\hat{\rho}-1} -(\alpha-1)x^{1-\alpha} \int\limits_1^{u} \psi_{\alpha\rho}(w) \, \dd w \Big] \, \dd u \\
=&\, \int\limits_{1 \lor \frac{x}{1+\eps}}^x u^{-\alpha} \Big[(u+1)^{\alpha\rho}(u-1)^{\alpha\hat{\rho}-1} -(\alpha-1) \int\limits_1^{u} \psi_{\alpha\rho}(w) \, \dd w \Big] \, \dd u \\
=&\, \int\limits_{1 \lor \frac{x}{1+\eps}}^x u^{-\alpha} \Big[(u+1)\psi_{\alpha\rho}(u) -(\alpha-1) \int\limits_1^{u} \psi_{\alpha\rho}(w) \, \dd w \Big] \, \dd u \\
=& \frac{1}{\sin(\pi\alpha\hat\rho)} \int\limits_{1 \lor \frac{x}{1+\eps}}^x u^{-\alpha} v_1(u) \, \dd u.
\end{align*}
With l'Hopital's rule we get
\begin{align*}
\lim_{\eps \searrow 0} \frac{2}{\alpha-1} \frac{1}{\eps} \mP_\circ^{x}(\xi_{\underline{m}} \in (1,1+\eps)) &= \frac{1}{\sin(\pi\alpha\hat\rho)} \lim_{\eps \searrow 0} \Big[\frac{x}{(1+\eps)^2} \Big(\frac{x}{1+\eps}\Big)^{-\alpha} v_1\Big(\frac{x}{1+\eps}\Big) \Big] \\
&= \frac{1}{\sin(\pi\alpha\hat\rho)} x^{1-\alpha} v_1(x)\\
&= \frac{v_1(x)}{e(x)}.
\end{align*}
This shows \eqref{eq_as1_>1} for $x>1$. For $x<-1$ the equality \eqref{eq_as1_>1} follows similarly. \smallskip

To show the second claim we use
\begin{align*}
\frac{\alpha-1}{2} v_{-1}(x) &= \lim\limits_{\eps \searrow 0} \frac{e(x)}{\eps}\mP_\circ^{x}(\xi_{\underline{m}} \in (-(1+\eps),-1)), \quad x \notin [-1,1],
\end{align*}
which follows from a computation similar to \eqref{eq_as1_>1}. Using that $v=v_1+v_{-1}$ the second claim follows.
\end{proof}

\begin{proof}[Proof of Theorems \ref{thm_cond_v1_altern_>1} and \ref{thm_cond_v_altern_>1}]
Since the process conditioned to avoid $0$ is a strong Markov process (this follows by general theory on $h$-transforms, see e.g. Chung and Walsh \cite{Chu_Wal_01}) we can use  arguments analogous to the case $\alpha<1$ to obtain, for all $x \notin [-1,1]$,
\begin{align*}
&\quad\mP_\circ^x(\Lambda, t< T_{(-(1+\delta),1+\delta)} \, |\,\xi_{\underline m} \in (1,1+\eps))\\
&= \mE_\circ^x \la \1_\Lambda \1_{\lc t< T_{(-(1+\delta),1+\delta)}\rc} \frac{\mP_\circ^{\xi_t}(\xi_{\underline m} \in (1,1+\eps))}{\mP_\circ^x(\xi_{\underline m} \in (1,1+\eps))} \ra.
\end{align*}
In the proof of Proposition \ref{prop_as_altern_>1} we have already seen that
$$\mP_\circ^{y}(\xi_{\underline{m}} \in (1,1+\eps)) = \frac{\alpha-1}{2 \sin(\pi\alpha\hat\rho)} \int\limits_{1 \lor \frac{y}{1+\eps}}^y u^{-\alpha}v_{1}(u) \, \dd u$$
for $y>1+\eps$. Analogously we can show
$$\mP_\circ^{y}(\xi_{\underline{m}} \in (-(1+\eps),-1)) = \frac{\alpha-1}{2 \sin(\pi\alpha\hat\rho)} \int\limits_{1 \lor \frac{y}{1+\eps}}^y u^{-\alpha}v_{-1}(u) \, \dd u$$ 
for $y>1+\eps$ and hence, we have
$$\mP_\circ^{y}(|\xi_{\underline{m}}| \in (1,1+\eps)) = \frac{\alpha-1}{2\sin(\pi\alpha\hat\rho)} \int\limits_{1 \lor \frac{y}{1+\eps}}^y u^{-\alpha}v(u) \, \dd u$$
for $y>1+\eps$. Now we fix $\delta>0$ and assume that $\eps$ is so small that $\frac{1+\delta}{1+\eps} \geq 1+\frac{\delta}{2}$. We define again $C_\delta:= \sup_{|u| \geq 1+\frac{\delta}{2}} v(u)$ which is finite. Note that, for $y>1+\delta$, we have:
\begin{align*}
\mP_\circ^{y}(|\xi_{\underline{m}}| \in (1,1+\eps)) &=\frac{\alpha-1}{2\sin(\pi\alpha\hat\rho)}\int\limits_{\frac{y}{1+\eps}}^y u^{-\alpha} v(u) \, \dd u \\
&\leq \frac{\alpha-1}{2\sin(\pi\alpha\hat\rho)} \frac{y\eps}{1+\eps} \Big(\frac{y}{1+\eps}\Big)^{-\alpha} \sup_{u \in [\frac{y}{1+\eps},\infty)} v(u)\\
&\leq \frac{C_\delta(\alpha-1)}{2\sin(\pi\alpha\hat\rho)} \frac{\eps}{(1+\eps)^{1-\alpha}}y^{1-\alpha}\\
&\leq \frac{C_\delta(\alpha-1)}{2\sin(\pi\alpha\hat\rho)} \eps (1+\delta)^{\alpha-1 }y^{1-\alpha}.
\end{align*}
So we can estimate on $\{ t< T_{[-(1+\delta),1+\delta]}, \xi_t > 1 \}$:
\begin{align*}
\frac{\mP^{\xi_t}(\xi_{\underline m} \in (1,1+\eps))}{\eps} &\leq \frac{C_\delta(\alpha-1)}{2\sin(\pi\alpha\hat\rho)} (1+\delta)^{\alpha-1 }\xi_t^{1-\alpha}.
\end{align*}
On $\{ t< T_{[-(1+\delta),1+\delta]}, \xi_t <- 1 \}$ an analogous argument shows
\begin{align*}
\frac{\mP^{\xi_t}(\xi_{\underline m} \in (1,1+\eps))}{\eps} &\leq \frac{C_\delta(\alpha-1)}{2\sin(\pi\alpha\rho)} (1+\delta)^{\alpha-1 }|\xi_t|^{1-\alpha}.
\end{align*}
Further it holds that
\begin{align*}
\frac{1}{\sin(\pi\alpha\hat\rho)}\mE_\circ^x \big[ \1_\Lambda \1_{\{ t< T_{[-(1+\delta),1+\delta]}, \xi_t > 1 \}} \xi_t^{1-\alpha} \big] 
&= \frac{1}{e(x)} \mE^x \big[ \1_\Lambda \1_{\{ t< T_{[-(1+\delta),1+\delta]}, \xi_t > 1 \}} \xi_t^{\alpha-1} \xi_t^{1-\alpha} \big] \\
&\leq \frac{1}{e(x)}
\end{align*}
and, analogously,
$$\frac{1}{\sin(\pi\alpha\rho)}\mE_\circ^x \big[ \1_\Lambda \1_{\{ t< T_{[-(1+\delta),1+\delta]}, \xi_t < -1 \}} |\xi_t|^{1-\alpha} \big] \leq \frac{1}{e(x)}.$$
So we can use dominated convergence and the Markov property as follows:
\begin{align*}
&\quad\lim_{\eps \searrow 0} \mP_\circ^x(\Lambda, t< T_{[-(1+\delta),1+\delta]} \, |\,\xi_{\underline m} \in (1,1+\eps)) \\
&= \lim_{\eps \searrow 0}\frac{\eps}{{\mP_\circ^x(\xi_{\underline m} \in (1,1+\eps))}} \\
&\quad  \quad \times \lim_{\eps \searrow 0} \mE_\circ^x \Big[ \1_\Lambda \1_{\{ t< T_{[-(1+\delta),1+\delta]}\}}  \frac{\mP_\circ^{\xi_t}(\xi_{\underline m} \in (1,1+\eps))}{\eps} \Big] \\
&= \lim_{\eps \searrow 0}\frac{\eps}{{\mP_\circ^x(\xi_{\underline m} \in (1,1+\eps))}} \\
&\quad  \quad \times \mE_\circ^x \Big[ \1_\Lambda \1_{\{ t< T_{[-(1+\delta),1+\delta]}\}} \lim_{\eps \searrow 0} \frac{\mP_\circ^{\xi_t}(\xi_{\underline m} \in (1,1+\eps))}{\eps} \Big] \\
&= \mE_\circ^x \Big[ \1_\Lambda \1_{\{ t< T_{[-(1+\delta),1+\delta]}\}} \frac{e(x)v_1(\xi_t)}{e(\xi_t)v_1(x)} \Big] \\
&= \mE^x \Big[ \1_\Lambda \1_{\{ t< T_{[-(1+\delta),1+\delta]}\}} \frac{v_1(\xi_t)}{v_1(x)} \Big].
\end{align*}
In the second last second step we used Proposition \ref{prop_as_altern_>1}. Theorem \ref{thm_cond_v_altern_>1} can be proven similarly using the same dominating function.
\end{proof}

\textbf{Acknowledgements.} The authors would like to thank Dr. Watson for useful discussions on this topic.

\bibliographystyle{abbrvnat}
\bibliography{references}

\end{document}